\theoremstyle{plain}
\newcommand{\newreptheorem}[2]{\newtheorem*{rep@#1}{\rep@title}\newenvironment{rep#1}[1]{\def\rep@title{#2 \ref*{##1}}\begin{rep@#1}}{\end{rep@#1}}}
\newtheorem{theorem}{Theorem}[section]
\newtheorem*{theorem-non}{Theorem}
\newtheorem*{non-lemma}{Lemma}
\newtheorem{lemma}[theorem]{Lemma}
\newtheorem{claim}[theorem]{Claim}
\theoremstyle{definition}
\newtheorem{definition}[theorem]{Definition}
\DeclareMathOperator{\Pois}{Pois}
\DeclareMathOperator{\Ver}{Ver}
\DeclareMathOperator{\Bd}{Binom}
\DeclareMathOperator{\GW}{GW}
\DeclareMathOperator{\RowS}{RowSpace}
\DeclareMathOperator{\rank}{rank}
\begin{document}
\title{The persistent homology of the Linial-Meshulam process}
\author{Andr\'as M\'esz\'aros}
\date{}
\affil{HUN-REN Alfr\'ed R\'enyi Institute of Mathematics, Budapest, Hungary}
\maketitle
\begin{abstract} 
For a fixed dimension $k\ge 1$, let us consider the randomly growing simplical complex on the vertex set $\{1,2,\dots,n\}$ defined as follows: We start with the empty complex, and for each $k+1$-element subset $\sigma$ of $\{1,2,\dots,n\}$, we add $\sigma$ and all of its subsets to the complex at some random time $t_\sigma$, where $(t_\sigma)$ are i.i.d. uniform random elements of $[0,n]$. As the complex evolves, new $k-1$-dimensional cycles are born and then at a later time they die, that is, they get filled in. The notion of persistence diagrams, which is a standard tool in topological data analysis, provides a way to record these birth and death times. In this paper, we understand the asymptotic behavior of the persistence diagrams of the above defined randomly evolving complexes as $n$ goes to infinity.

As the single time marginals of the above process are variants of the Linial-Meshulam complex, our results can be viewed as extensions of the results of Linial and Peled on the Betti numbers of the Linial-Meshulam complex.

Our proof relies on the notion of local weak convergence of graphs and a generalization of the results of Bordenave,  Lelarge and Salez on the rank of sparse random matrices.

\end{abstract}

\section{Introduction}

Let $\mathcal{K}$ be a simplicial complex, and let $\kappa:\mathcal{K}\to[0,\infty)$ be a function which is increasing in the sense that $\kappa(\tau)\le \kappa(\sigma)$ for all $\tau\subset \sigma\in \mathcal{K}$. 

For $t\ge 0$, we define the subcomplex $\mathcal{K}(t)=\mathcal{K}(\kappa,t)$ of $\mathcal{K}$ as
\[\mathcal{K}(t)=\{\sigma\in \mathcal{K}\,:\,\kappa(\sigma)\le t\}.\]
We refer to $\left(\mathcal{K}(t)\right)_{t\ge 0}$ as the \emph{$\kappa$-filtration} of $\mathcal{K}$. Clearly, $\mathcal{K}(t_1)\subset \mathcal{K}(t_2)\subset \mathcal{K}$ whenever $0\le t_1\le t_2$.

Next, we define the \emph{persistent Betti-numbers} of filtrations of simplicial complexes. In this paper, we only consider reduced homology with real coefficients. For any $t\ge 0$, we have $\mathcal{K}(t)\subset \mathcal{K}$. Thus, $C_i(\mathcal{K}(t)),B_i(\mathcal{K}(t))$ and $Z_i(\mathcal{K}(t))$ can all be considered as a subspace of $C_i(\mathcal{K})$. Relying on the above identification, for any $r,s\ge 0$, we can define the persistent Betti-number 
\[\beta^{r,s}_i(\mathcal{K},\kappa)=\dim \frac{Z_i(\mathcal{K}(r))}{Z_i(\mathcal{K}(r))\cap B_i(\mathcal{K}(s))}.\]

Although we are usually interested in the case $r\le s$, the definition makes sense even for $r>s$. For $r=s$, the persistent Betti number $\beta^{r,s}_i(\mathcal{K},\kappa)
$ just gives back the usual $i$th Betti number of $\mathcal{K}(s)$.

Now, we introduce our main object of study, the \emph{Linial-Meshulam filtration}.

Fix a dimension $k\ge 1$. For a positive integer $n\ge k+1$, let $\mathcal{Y}_n$ be the complete $k$-dimensional simplicial complex on the vertex set $[n]=\{1,2,\dots,n\}$. Let $\kappa_n:\mathcal{Y}_n\to [0,\infty)$ be a random function such that
\begin{enumerate}[\indent (a)]
\item $\left\{\kappa_n(\sigma)\,:\,\sigma\in {{[n]}\choose{k+1}}\right\}$ are i.i.d. uniform random elements of the interval $[0,n]$.
\item For a $\tau\in \mathcal{Y}_n$ such that $|\tau|<k+1$, we have
\[\kappa_n(\tau)=\min_{\tau\subset \sigma \in {{[n]}\choose{k+1}}} \kappa_n(\sigma).\]
\end{enumerate}

Note that for a fixed time $0\le t\le n$, the random complex $\mathcal{Y}_n(t)$ can be also described as follows. Independently for each $k+1$-element subset $\sigma$ of $[n]$, with probability $\frac{t}n$ add all the subsets of $\sigma$ to the complex. Thus, $\mathcal{Y}_n(t)$ is a slightly modified version of the usual well-studied \emph{Linial-Meshulam complex}~\cite{linial2006homological,lm1,lm2,lm3,lm4,lm45,lm5,lm6,lm7}, which is the high dimensional analogue of the Erd\H{o}s-R\'enyi random graph. Indeed, the Linial-Meshulam complex is usually defined to have a complete $k-1$-skeleton, but in our version a lower dimensional face is only added to the complex if we have chosen a $k$-dimensional face containing it. (We would not want to consider the version with a complete $k-1$-skeleton, because in that case all the $k-1$ dimensional cycles would be born at time $0$, thus, the persistent homology would be less interesting.)

Note that since $\kappa_n$ is random, the persistent Betti number $\beta_{k-1}^{r,s}(\mathcal{Y}_n,\kappa_n)$ is a random variable. Our first theorem describes the limiting behavior of these random variables. This result can be viewed as an extension of the results of Linial and Peled~\cite{linial2016phase} on the Betti numbers of the Linial-Meshulam complexes, see Section~\ref{secLP} for more details.

For $q\in (0,1]$ and $c\in [0,\infty)$, let
\begin{align*}\Lambda_{q,c}(t)&=\exp(-c(1-qt)^k)-\frac{c}{q(k+1)}\left(1-(1-qt)^{k+1}-q(k+1)t(1-qt)^k\right),\\
\lambda_{q,c}&=\max_{t\in[0,1]}\Lambda_{q,c}(t).
\end{align*}

\begin{theorem}\label{thm1}
    Let $r,s\in [0,\infty)$. As $n$ tends to infinity, the random variables
    \[\frac{\beta_{k-1}^{r,s}(\mathcal{Y}_n,\kappa_n)}{{{n}\choose{k}}}\]
    converge in probability to some constant $\hat{\beta}_{k-1}^{r,s}$ defined as
    \[\hat{\beta}_{k-1}^{r,s}=\begin{cases}
        \lambda_{1,s}-\exp(-r)\lambda_{\exp(-r),s-r}&\text{if }r< s,\\
        \lambda_{1,s}-\exp(-r)&\text{if }r\ge s.
    \end{cases}\]
    
\end{theorem}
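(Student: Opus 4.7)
The plan is to reduce $\beta_{k-1}^{r,s}(\mathcal{Y}_n,\kappa_n)$ to ranks of sparse random boundary operators and then to evaluate those ranks via a persistent-filtration extension of the Bordenave--Lelarge--Salez variational formula.

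I would start with a chain-level identity. From
$\beta_{k-1}^{r,s}=\dim Z_{k-1}(\mathcal{Y}_n(r))-\dim\bigl(Z_{k-1}(\mathcal{Y}_n(r))\cap B_{k-1}(\mathcal{Y}_n(s))\bigr)$,
expressing the intersection as $\partial_k$-image of $\partial_k^{-1}(C_{k-1}(\mathcal{Y}_n(r)))\cap C_k(\mathcal{Y}_n(s))$ and applying rank-nullity to the composition $\psi^{r,s}:=\pi^{r,s}\circ\partial_k|_{C_k(\mathcal{Y}_n(s))}$, where $\pi^{r,s}$ projects $C_{k-1}(\mathcal{Y}_n(s))$ onto its quotient by $C_{k-1}(\mathcal{Y}_n(s))\cap C_{k-1}(\mathcal{Y}_n(r))$, one obtains
$\beta_{k-1}^{r,s}=f_{k-1}(\mathcal{Y}_n(r))-\rank\partial_{k-1}|_{\mathcal{Y}_n(r)}-\rank\partial_k|_{\mathcal{Y}_n(s)}+\rank\psi^{r,s}$.
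Because $\rank\partial_{k-1}|_{\mathcal{Y}_n(r)}\le f_{k-2}(\mathcal{Y}_n(r))=O(n^{k-1})=o\bigl(\binom{n}{k}\bigr)$, that term drops out after normalization, and only three quantities remain, each depending solely on the birth-time-marked bipartite incidence structure between $k$- and $(k-1)$-faces.

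Next I would establish the Benjamini--Schramm local weak limit of this marked structure. Around a uniformly chosen $(k-1)$-face, the $k$-faces containing it that are born by time $s$ form an asymptotically $\Pois(s)$ set with i.i.d.\ $\text{Uniform}[0,s]$ birth-time marks; iterating, the neighborhoods decouple into a Galton--Watson-type branching marked incidence structure akin to the one underlying Linial--Peled. In particular the face count $f_{k-1}(\mathcal{Y}_n(r))/\binom{n}{k}\to 1-e^{-r}$, which is already one of the three terms.

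The heart of the proof, and the step I expect to be the main obstacle, is extending Bordenave--Lelarge--Salez to compute $\rank\partial_k|_{\mathcal{Y}_n(s)}$ and $\rank\psi^{r,s}$ on this local limit. For $\partial_k|_{\mathcal{Y}_n(s)}$ the computation should reproduce the Linial--Peled analysis and yield $\rank\partial_k|_{\mathcal{Y}_n(s)}/\binom{n}{k}\to 1-\lambda_{1,s}$, with $\lambda_{1,s}$ arising as the maximum of $\Lambda_{1,s}(t)$ over a one-parameter family of "thinning probabilities" on the marked limit. For $\psi^{r,s}$, the projection $\pi^{r,s}$ effectively voids the rows indexed by $(k-1)$-faces born by time $r$; on the local limit this conditions out the fraction $1-e^{-r}$ of $(k-1)$-faces and leaves a branching structure in which a $(k-1)$-face survives with probability $q=e^{-r}$ and receives an effective $\Pois(s-r)$ set of "useful" $k$-faces. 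Reoptimizing the same functional over this conditioned limit yields $\rank\psi^{r,s}/\binom{n}{k}\to e^{-r}(1-\lambda_{e^{-r},s-r})$; the identity $\Lambda_{q,0}\equiv 1$, and hence $\lambda_{q,0}=1$, handles $r\ge s$ (where the quotient $\pi^{r,s}$ becomes trivial and $\rank\psi^{r,s}=0$). Substitution into the chain-level formula collapses to $\hat\beta_{k-1}^{r,s}=(1-e^{-r})-(1-\lambda_{1,s})+e^{-r}(1-\lambda_{e^{-r},s-r})=\lambda_{1,s}-e^{-r}\lambda_{e^{-r},s-r}$ for $r<s$, and to $\lambda_{1,s}-e^{-r}$ for $r\ge s$, as claimed. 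The delicate work in this final step is to define the correct class of "self-consistent measures" on the marked Galton--Watson incidence structure in the presence of the $r$-filtration mark, identify the functional on that class which specializes to $\Lambda_{q,c}$, and prove matching upper and lower bounds on the rank while keeping careful track of both filtration times simultaneously.
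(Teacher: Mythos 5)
Your proposal is correct and follows essentially the same route as the paper: your $\rank\partial_k|_{\mathcal{Y}_n(s)}$ and $\rank\psi^{r,s}$ are exactly the paper's matrices $K_n$ and $M_n$, the rank--nullity reduction and the negligibility of $\rank\partial_{k-1}$ are identical, and your conditioned local limit (columns surviving with probability $q=e^{-r}$, effective $\Pois(s-r)$ rows of degree $\Bd(e^{-r},k+1)$) is the paper's $\GW_*(\Pois(s-r),\Bd(e^{-r},k+1))$-tree. The "main obstacle" you flag is carried out in the paper as Theorem~\ref{RankThm} (Bordenave--Lelarge--Salez with two offspring laws $\mu\neq\nu$) together with Lemma~\ref{lemmaFixedpoints}, which supplies the needed verification that $\max_{t\in[0,1]}\Lambda_{q,c}(t)$ is attained at an extremal fixed point so that the spectral lower bound and leaf-removal upper bound match.
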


Let $\mathcal{K}$ be a simplical complex and $\kappa:\mathcal{K}\to[0,\infty)$ be an increasing function. Assume that $H_i(\mathcal{K})$ is trivial. The \emph{verbose persistence diagram} $\xi_{\Ver,i}=\xi_{\Ver,i}(\mathcal{K},\kappa)$ of the $\kappa$-filtration of $\mathcal{K}$ is defined as the unique probability measure on 
\[\overline{\Delta}=\{(x,y)\in [0,\infty)^{2}\,:\,x\le y\}\]
such that for all $r,s\in [0,\infty)$, we have
\[\xi_{\Ver,i}(([0,r]\times [0,s])\cap\overline{\Delta})=\frac{\dim Z_i(\mathcal{K}(r))\cap B_i(\mathcal{K}(s))}{\dim Z_i(\mathcal{K})}=\frac{\dim Z_i(\mathcal{K}(r))-\beta_i^{r,s}(\mathcal{K},\kappa)}{\dim Z_i(\mathcal{K})}.\]
The \emph{persistence diagram} $\xi_{i}(\mathcal{K},\kappa)$ of the $\kappa$-filtration of $\mathcal{K}$ is defined as the restriction of $\xi_{\Ver,i}(\mathcal{K},\kappa)$ to 
\[{\Delta}=\{(x,y)\in [0,\infty)^{2}\,:\,x< y\}.\]
See Section~\ref{secPD} for more details.

Let $\hat{\xi}_{\Ver,k-1}$ be the unique probability measure on $\overline{\Delta}$ such that for all $r,s\in [0,\infty)$, we have
\[\hat{\xi}_{\Ver,k-1}(([0,r]\times [0,s])\cap\overline{\Delta})=1-\exp(-r)-\hat{\beta}^{r,s}_{k-1},\]
and let $\hat{\xi}_{k-1}$ be the restriction of $\hat{\xi}_{\Ver,k-1}$ to $\Delta$.

\begin{theorem}\label{diagramconv}\hfill
\begin{enumerate}[(a)]
    \item As $n$ tends to infinity, $\xi_{\Ver,k-1}(\mathcal{Y}_n,\kappa_n)$ converges weakly to $\hat{\xi}_{\Ver,k-1}$ in probability. 

    In other words, let $\mathcal{U}$ be an open set of probability measures on $\overline{\Delta}$ with respect to the weak topology such that $\hat{\xi}_{\Ver,k-1}\in \mathcal{U}$. Then $\lim_{n\to\infty}\mathbb{P}(\xi_{\Ver,k-1}(\mathcal{Y}_n,\kappa_n)\in \mathcal{U})=1$.
    \item As $n$ tends to infinity, $\xi_{k-1}(\mathcal{Y}_n,\kappa_n)$ converges weakly to $\hat{\xi}_{k-1}$ in probability. 
\end{enumerate}
\end{theorem}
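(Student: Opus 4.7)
The plan is to derive Theorem~\ref{diagramconv} from Theorem~\ref{thm1} via the standard correspondence between persistent Betti numbers and cumulative distribution functions of persistence diagrams. Write $\mu_n := \xi_{\Ver,k-1}(\mathcal{Y}_n,\kappa_n)$ and $\hat{\mu} := \hat{\xi}_{\Ver,k-1}$, with CDFs $F_n(r,s):=\mu_n([0,r]\times[0,s])$ and $\hat F(r,s) = 1-e^{-r}-\hat\beta^{r,s}_{k-1}$. For Part~(a), the first step is to show that $F_n(r,s) \to \hat F(r,s)$ in probability for every $(r,s)\in[0,\infty)^2$. By the definition of $\mu_n$,
\[F_n(r,s) = \frac{\dim Z_{k-1}(\mathcal{Y}_n(r)) - \beta_{k-1}^{r,s}(\mathcal{Y}_n,\kappa_n)}{\dim Z_{k-1}(\mathcal{Y}_n)}.\]
Since $\mathcal{Y}_n$ is the complete $k$-skeleton of a simplex, $\dim Z_{k-1}(\mathcal{Y}_n) = \binom{n-1}{k}\sim\binom{n}{k}$, and Theorem~\ref{thm1} gives $\beta_{k-1}^{r,s}/\binom{n}{k} \to \hat\beta_{k-1}^{r,s}$ in probability. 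The only missing ingredient is
\[\dim Z_{k-1}(\mathcal{Y}_n(r))/\binom{n}{k} \xrightarrow{\mathbb{P}} 1 - e^{-r}, \qquad (\star)\]
which I would prove by rank--nullity: $\dim Z_{k-1}(\mathcal{Y}_n(r)) = \dim C_{k-1}(\mathcal{Y}_n(r)) - \dim\partial_{k-1}C_{k-1}(\mathcal{Y}_n(r))$. The boundary term is bounded by $\binom{n}{k-1}=o(\binom{n}{k})$ and thus negligible, while the first term counts $(k-1)$-faces of $\mathcal{Y}_n(r)$: each $\tau$ is present iff at least one of its $n-k$ cofaces has been added, with probability $1-(1-r/n)^{n-k}\to 1-e^{-r}$, and a standard second-moment/Chebyshev argument gives the required concentration.

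To upgrade pointwise-in-probability CDF convergence to weak-in-probability convergence of measures, I would apply the standard subsequence principle: fix a countable dense set $D\subset\overline{\Delta}$ of continuity points of $\hat F$ (its complement is a countable union of horizontal and vertical lines, so $D$ exists); every subsequence of $(n)$ admits a further subsequence along which $F_{n_k}(r,s)\to \hat F(r,s)$ almost surely simultaneously for all $(r,s)\in D$; the multivariate Polya--Helly theorem then gives $\mu_{n_k}\to\hat\mu$ weakly a.s.\ along this subsequence, which is equivalent to weak-in-probability convergence, proving Part~(a).

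Part~(b) follows from Part~(a) by restriction: since $\xi_{k-1}(\mathcal{Y}_n,\kappa_n)$ and $\hat\xi_{k-1}$ are the restrictions of $\mu_n$ and $\hat\mu$ to the open set $\Delta$, for any bounded continuous function $f$ on $\Delta$ supported away from the diagonal we have $\int f\,d\xi_{k-1}(\mathcal{Y}_n,\kappa_n) = \int f\,d\mu_n \to \int f\,d\hat\mu = \int f\,d\hat\xi_{k-1}$ in probability. The substantive work sits in Theorem~\ref{thm1}; the only genuinely new step here is the concentration $(\star)$, which is easy because $\dim\partial_{k-1}C_{k-1}$ is of strictly lower order than $\binom{n}{k}$, and the rest is a routine CDF-to-measure upgrade.
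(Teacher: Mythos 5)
Your part (a) is essentially the paper's argument: Theorem~\ref{thm1} plus the limit $\dim Z_{k-1}(\mathcal{Y}_n(r))/\binom{n}{k}\to 1-e^{-r}$ (which the paper also gets from rank--nullity and the negligibility of $\binom{n}{k-1}$, cf.\ \eqref{Zlimit}) gives pointwise convergence in probability of the distribution functions, and a routine CDF-to-measure upgrade (the paper's Lemma~\ref{CDF}, your subsequence/Polya--Helly version) finishes it; the only cosmetic difference is that the paper also verifies tightness and the continuity of the limiting distribution function explicitly, which your argument implicitly needs when invoking the existence of $\hat{\xi}_{\Ver,k-1}$ and the density of its continuity points.

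Part (b), however, has a genuine gap, and it is exactly the point the paper flags as the non-trivial content of the theorem. Restricting a weakly convergent sequence of measures to the open set $\Delta$ does not give weak convergence of the restrictions: your argument only treats bounded continuous $f$ supported away from the diagonal, i.e.\ it proves vague convergence toward the diagonal, not weak convergence on $\Delta$. Weak convergence on $\Delta$ requires in particular $\xi_{k-1}(\mathcal{Y}_n,\kappa_n)(\Delta)\to\hat{\xi}_{k-1}(\Delta)$ (take $f\equiv 1$), and part (a) cannot rule out that mass of $\xi_{\Ver,k-1}(\mathcal{Y}_n,\kappa_n)$ lying just off the diagonal collapses onto the diagonal in the limit, which would make the restricted total masses drop. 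The paper closes this by a separate combinatorial computation: it identifies the diagonal atoms of $\xi_{\Ver,k-1}(\mathcal{Y}_n,\kappa_n)$ with the ``promoting'' faces $\sigma\in\binom{[n]}{k+1}$ (those containing a $k-1$-face not yet present at time $\kappa_n(\sigma)$), computes the expectation of their number exactly (Lemmas~\ref{prom1} and~\ref{prom2}), and shows via Fatou that $\liminf_n\mathbb{E}\,\xi_{\Ver,k-1}(\mathcal{Y}_n,\kappa_n)(D_{[0,\infty)})\ge\hat{\xi}_{\Ver,k-1}(D_{[0,\infty)})=\frac{1}{k+1}\sum_{j=1}^{k+1}\frac{1}{j}$ (Lemma~\ref{Dab} and \eqref{totalDelta0}); combined with the matching asymptotic upper bound that follows from part (a), this yields $\xi_{\Ver,k-1}(\mathcal{Y}_n,\kappa_n)(\Delta)\to\hat{\xi}_{\Ver,k-1}(\Delta)$ in probability, which together with your vague convergence gives the claimed weak convergence. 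None of this appears in your proposal, and your closing remark that the only new step is the concentration $(\star)$ misplaces the difficulty: the diagonal-mass (no escape of mass through $\overline{\Delta}\setminus\Delta$) argument is the substantive new ingredient of Theorem~\ref{diagramconv}.
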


Note that part $(a)$ follows from Theorem~\ref{thm1} by a standard argument. Then it also follows clearly  from part $(a)$ that $\xi_{k-1}(\mathcal{Y}_n,\kappa_n)$ converges vaguely to $\hat{\xi}_{k-1}$ in probability. However, to upgrade the vague convergence to weak convergence,  some additional work is required, as we need to exclude the possibility that in the limit some mass escapes through the diagonal~$\overline{\Delta}\setminus \Delta$. 

As a by-product of the proof, we obtain the formula
\begin{equation}\label{totalDelta}\hat{\xi}_{k-1}(\Delta)=1-\frac{1}{k+1}\sum_{j=1}^{k+1}\frac{1}{j}.\end{equation}

In terms of observables, part (a) of Theorem~\ref{diagramconv} says that for any continuous bounded function $f:\overline{\Delta}\to \mathbb{R}$, the integral $\int f d \xi_{\Ver,k-1}(\mathcal{Y}_n,\kappa_n)$ converges to $\int f d \hat{\xi}_{\Ver,k-1}$ in probability. Often we are interested in the integrals of functions which are not bounded. For example, integrating the function $f(r,s)=s-r$ against the measure $\xi_{\Ver,k-1}(\mathcal{Y}_n,\kappa_n)$ gives the normalized lifetime sum, which quantity plays a key role in the higher dimensional generalizations of Frieze's $\zeta(3)$-limit theorem \cite{frieze1985value,hiraoka2017minimum,hino2019asymptotic}.  Our next theorem allows us to determine the limits of integrals of continuous functions $f(r,s)$ where $|f(r,s)|$ is bounded by a polynomial of $s$.

\begin{theorem}\label{thmunboundedobs}
Let $f:\overline{\Delta}\to \mathbb{R}$ be a continuous function. Assume that there is a $0<\ell<\infty$ such that
\[\sup_{(r,s)\in \overline{\Delta}} \frac{|f(r,s)|}{1+s^\ell}<\infty.\]
Then $\int f d \xi_{\Ver,k-1}(\mathcal{Y}_n,\kappa_n)$ converges to $\int f d \hat{\xi}_{\Ver,k-1}$ in probability.

The same result holds with $\xi_{k-1}(\mathcal{Y}_n,\kappa_n)$, $\hat{\xi}_{k-1}$, ${\Delta}$ in place of $\xi_{\Ver,k-1}(\mathcal{Y}_n,\kappa_n)$, $\hat{\xi}_{\Ver,k-1}$, $\overline{\Delta}$, respectively.

\end{theorem}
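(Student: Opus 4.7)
The plan is to upgrade the weak convergence from Theorem~\ref{diagramconv} by a standard cutoff plus uniform-integrability argument. Fix a continuous $\chi\colon[0,\infty)\to[0,1]$ with $\chi\equiv 1$ on $[0,1]$ and $\chi\equiv 0$ on $[2,\infty)$, and set $\chi_M(s)=\chi(s/M)$. Then $(r,s)\mapsto f(r,s)\chi_M(s)$ is bounded continuous on $\overline{\Delta}$, so Theorem~\ref{diagramconv}(a) gives $\int f\chi_M\,d\xi_{\Ver,k-1}(\mathcal{Y}_n,\kappa_n)\to\int f\chi_M\,d\hat{\xi}_{\Ver,k-1}$ in probability for each fixed $M$. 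Since $|f(r,s)(1-\chi_M(s))|\le C(1+s^\ell)\mathbf{1}_{\{s\ge M\}}$ by the growth hypothesis, the theorem reduces to the two tail estimates
\[
\lim_{M\to\infty}\int (1+s^\ell)\mathbf{1}_{\{s\ge M\}}\,d\hat{\xi}_{\Ver,k-1}=0 \quad\text{and}\quad \lim_{M\to\infty}\limsup_{n\to\infty}\mathbb{E}\!\int (1+s^\ell)\mathbf{1}_{\{s\ge M\}}\,d\xi_{\Ver,k-1}(\mathcal{Y}_n,\kappa_n)=0.
\]

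For the limit measure, letting $r\to\infty$ in $\hat{\xi}_{\Ver,k-1}([0,r]\times[0,s])=1-e^{-r}-\hat{\beta}_{k-1}^{r,s}$ and using the $r\ge s$ branch $\hat{\beta}_{k-1}^{r,s}=\lambda_{1,s}-e^{-r}$ yields $\hat{\xi}_{\Ver,k-1}(\{s>u\})=\lambda_{1,u}$. I would then analyze the critical-point condition $e^{-u(1-t)^k}=t$ of $\Lambda_{1,u}$: for large $u$ the interior local maximum sits near $t^\ast\approx e^{-u}$ with value $\Lambda_{1,u}(t^\ast)\approx e^{-u}$, matching the boundary value $\Lambda_{1,u}(0)=e^{-u}$, while the other interior critical value near $t=1$ is very negative. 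Hence $\lambda_{1,u}=O(e^{-u})$, which is integrable against any polynomial factor $u^{\ell-1}$ and yields the first limit.

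For the random measure, I would use the identity
\[
\xi_{\Ver,k-1}(\mathcal{Y}_n,\kappa_n)(\{s>u\})\cdot\dim Z_{k-1}(\mathcal{Y}_n)=\beta_{k-1}^{n,u}=\beta_{k-1}^{u,u}+\bigl[\dim Z_{k-1}(\mathcal{Y}_n)-\dim Z_{k-1}(\mathcal{Y}_n(u))\bigr],
\]
valid because $B_{k-1}(\mathcal{Y}_n(u))\subseteq Z_{k-1}(\mathcal{Y}_n(u))\subseteq Z_{k-1}(\mathcal{Y}_n)$. The second bracket is dominated by the number of $(k-1)$-faces absent from $\mathcal{Y}_n(u)$, whose mean is $\binom{n}{k}(1-u/n)^{n-k}\le 2\binom{n}{k}e^{-u/2}$ for $n$ large. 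For the first term, Theorem~\ref{thm1} at $r=s=u$ together with the a priori bound $\beta_{k-1}^{u,u}\le\binom{n-1}{k}$ gives $\mathbb{E}[\beta_{k-1}^{u,u}]/\binom{n}{k}\to\lambda_{1,u}-e^{-u}=O(e^{-u})$ by bounded convergence. Substituting into the layer-cake identity $\int s^\ell\mathbf{1}_{\{s\ge M\}}\,d\xi=M^\ell\xi(\{s\ge M\})+\ell\int_M^\infty u^{\ell-1}\xi(\{s>u\})\,du$ and taking expectations delivers the second limit. The corresponding statement for $\xi_{k-1}$ on $\Delta$ then follows because $\xi_{k-1}\le\xi_{\Ver,k-1}$ transfers the tail bound, and Theorem~\ref{diagramconv}(b) supplies weak convergence of the truncated integrals.

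The main obstacle is that Theorem~\ref{thm1} is asserted only pointwise in $(r,s)$, whereas substituting into the layer-cake integral requires a tail bound on $\mathbb{E}[\beta_{k-1}^{n,u}]/\binom{n}{k}$ that is effective uniformly in $u$ as $n\to\infty$. I would handle this either by exploiting the monotonicity of $u\mapsto\beta_{k-1}^{n,u}$ to reduce to a dyadic set of values of $u$, or by appealing to the local weak convergence machinery underlying the proof of Theorem~\ref{thm1}, in which $\xi_{\Ver,k-1}(\{s>u\})$ is expressed as an average of a local observable whose tail is governed by a functional of the limiting Galton--Watson-type object and is therefore exponentially small uniformly in $n$.
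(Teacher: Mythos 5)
Your overall architecture (truncate $f$, apply Theorem~\ref{diagramconv} to the bounded part, control the tails of both the limiting and the empirical diagrams) is the same as the paper's, and your treatment of the limit measure is fine: indeed $\hat{\xi}_{\Ver,k-1}(\overline{\Delta}(u))=\lambda_{1,u}$, which decays exponentially since at any fixed point $t$ of $t=\exp(-u(1-t)^k)$ one has $\Lambda_{1,u}(t)\le t$, the smallest fixed point is at most $e^{-u2^{-k}}$ for large $u$, and $\Lambda_{1,u}(1)=1-\frac{u}{k+1}<0$. Your decomposition $\beta_{k-1}^{\infty,u}=\beta_{k-1}^{u,u}+\bigl[\dim Z_{k-1}(\mathcal{Y}_n)-\dim Z_{k-1}(\mathcal{Y}_n(u))\bigr]$ and the bound on the second bracket by the number of missing $(k-1)$-faces are also correct.

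The genuine gap is exactly the point you flag at the end and then leave unresolved: a bound on $\mathbb{E}\,\xi_{\Ver,k-1}(\mathcal{Y}_n,\kappa_n)(\overline{\Delta}(u))=\mathbb{E}\dim H_{k-1}(\overline{\mathcal{Y}_n}(u))/\binom{n-1}{k}$ that decays in $u$ \emph{uniformly in $n$}. Neither of your proposed remedies delivers it. Theorem~\ref{thm1} plus bounded convergence gives $\mathbb{E}\beta_{k-1}^{u,u}/\binom{n}{k}\to\hat{\beta}_{k-1}^{u,u}$ only for each fixed $u$, with an $n$-threshold depending on $u$; in the layer-cake integral the range of $u$ extends up to order $n$, so monotonicity of $u\mapsto\xi_n(\{s>u\})$ together with a dyadic decomposition still requires simultaneous control at $\sim\log(n/M)$ scales, which pointwise-in-$u$ convergence cannot provide (using only monotonicity from $u=M$ gives a bound of order $\lambda_{1,M}\,n^{\ell}$, which blows up). The appeal to ``local weak convergence machinery'' also does not work as stated: $\dim H_{k-1}(\mathcal{Y}_n(u))$ is a rank/corank, not a local observable, and its convergence under local weak convergence (Theorem~\ref{RankThm}) is neither quantitative nor uniform in the density parameter $u$ — for $u$ growing with $n$ the local limit used in Lemma~\ref{lemmaMnlimit} is not even valid. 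What fills this hole in the paper is Lemma~\ref{tightnesslemma}: for every $h>0$ there is $C$ with $\mathbb{E}\,\xi_{\Ver,k-1}(\mathcal{Y}_n,\kappa_n)(\overline{\Delta}(u))\le Cu^{-h}$ for all $n$ and $u>0$, proved via the identity $\xi_{\Ver,k-1}(\overline{\Delta}(u))=\dim H_{k-1}(\overline{\mathcal{Y}_n}(u))/\binom{n-1}{k}$ and the external estimate of Hino and Kanazawa on expected Betti numbers of Linial--Meshulam complexes; this is a nontrivial input, not something recoverable from Theorem~\ref{thm1}. Since that lemma is stated earlier in the paper, you could simply have invoked it (with $h=2\ell$, as the paper does), but as written your argument does not establish the required uniform tail bound, so the interchange of the $n\to\infty$ limit with the tail integral is unjustified.
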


\subsection{Persistence diagrams}\label{secPD}

In this section, we provide a short and self-contained construction of (verbose) persistence diagrams tailored to our needs. For different or more general approaches, see \cite{edelsbrunner2008persistent,weinberger2011persistent,fugacci2016persistent,adler2010persistent} and the references therein, and also the last paragraph of this section.

Let $\left(\mathcal{K}(t)\right)_{t\ge 0}$ be a $\kappa$-filtration of a simplicial complex $\mathcal{K}$.  We think of the parameter $t$ as time. We assume that $H_i(\mathcal{K})$ is trivial.

For any cycle $ c\in Z_i(\mathcal{K})$, the \emph{birth time} $b(c)$ of $c$ is defined as the time when the hole $c$ first appears, and the \emph{death time} $d(c)$ of $c$ is defined as the time when the hole $c$ is filled in. 

More formally,
\begin{align*}
    b(c)&=\min \{t\ge 0\,:\, c\in Z_i(\mathcal{K}(t))\},\\
    d(c)&=\min \{t\ge 0\,:\,c\in B_i(\mathcal{K}(t))\}.
\end{align*}

These definitions rely on the identification of $Z_i(\mathcal{K}(t))$ and $B_i(\mathcal{K}(t))$ with the corresponding subspaces of ${C}_i(\mathcal{K})$ as discussed in the introduction. Note that $d(c)$ is finite, since we assume that $H_i(\mathcal{K})$ is trivial.

Given any $r,s\in [0,\infty]$, the set of cycles that are born no later than $r$ and die no later than~$s$ form a subspace of $Z_i(\mathcal{K})$, namely,
\[\{c\in Z_i(\mathcal{K})\,:\, b(c)\le r\text{ and }d(c)\le s\}=Z_i(\mathcal{K}(r))\cap B_i(\mathcal{K}(s)).\]
The next lemma shows that we can always find a basis of $Z_{i}(\mathcal{K})$ that behaves nicely with respect to the subspaces $Z_i(\mathcal{K}(r))\cap B_i(\mathcal{K}(s))$. We give the proof of this lemma in Section~\ref{seclemmagoodbasis}.

\begin{lemma}\label{lemmagoodbasis}
    There is a basis $\mathcal{C}$ of $Z_i(\mathcal{K})$ such that for all $r,s\in[0,\infty]$, $\mathcal{C}\cap Z_i(\mathcal{K}(r))\cap B_i(\mathcal{K}(s))$ forms a basis of $Z_i(\mathcal{K}(r))\cap B_i(\mathcal{K}(s))$. In particular, \begin{equation}\label{eqgoodbases}\dim Z_i(\mathcal{K}(r))\cap B_i(\mathcal{K}(s))=|\mathcal{C}\cap Z_i(\mathcal{K}(r))\cap B_i(\mathcal{K}(s))|.\end{equation}
\end{lemma}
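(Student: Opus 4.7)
The plan is to construct $\mathcal{C}$ level by level. Since $\kappa$ takes only finitely many values $0\le t_1<t_2<\cdots<t_m$, and $Z_i(\mathcal{K}(t))$, $B_i(\mathcal{K}(t))$ are piecewise constant in $t$ with jumps only at these $t_k$, it suffices to verify \eqref{eqgoodbases} for $r,s\in\{0,t_1,\ldots,t_m,\infty\}$. Setting $t_0=0$ with $Z_i(\mathcal{K}(t_0))=0$, I inductively build disjoint finite sets $\mathcal{B}_k\subseteq Z_i(\mathcal{K}(t_k))$ so that $\mathcal{C}_k:=\mathcal{B}_1\cup\cdots\cup\mathcal{B}_k$ satisfies the required basis property at level $t_k$, and take $\mathcal{C}=\mathcal{C}_m$.

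For the inductive step, I pass to the quotient $Q_k:=Z_i(\mathcal{K}(t_k))/Z_i(\mathcal{K}(t_{k-1}))$ with projection $\pi_k$. The images $W_k(s):=\pi_k\bigl(B_i(\mathcal{K}(s))\cap Z_i(\mathcal{K}(t_k))\bigr)$ form an increasing one-parameter filtration of the finite-dimensional space $Q_k$, so a routine extension argument produces a basis $\overline{c}_1,\ldots,\overline{c}_\ell$ of $Q_k$ that restricts to a basis of each $W_k(s)$. For each $j$, let $s_j$ be the smallest $s$ with $\overline{c}_j\in W_k(s)$, and pick any lift $c_j\in B_i(\mathcal{K}(s_j))\cap Z_i(\mathcal{K}(t_k))$ of $\overline{c}_j$, which exists by definition of $s_j$. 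Set $\mathcal{B}_k=\{c_1,\ldots,c_\ell\}$. A minimality check then shows $d(c_j)=s_j$ exactly: $d(c_j)\le s_j$ is immediate, while $d(c_j)<s_j$ would put $\overline{c}_j\in W_k(d(c_j))$, contradicting minimality of $s_j$.

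To verify the inductive claim for $\mathcal{C}_k$, I take $v\in Z_i(\mathcal{K}(t_k))\cap B_i(\mathcal{K}(s))$; the basis property in $Q_k$ yields $\pi_k(v)=\sum_{j:s_j\le s}\alpha_j\overline{c}_j$, so $v-\sum_{j:s_j\le s}\alpha_jc_j$ lies in $Z_i(\mathcal{K}(t_{k-1}))\cap B_i(\mathcal{K}(s))$ and the inductive hypothesis finishes the expansion. The equality $d(c_j)=s_j$ implies $\mathcal{B}_k\cap B_i(\mathcal{K}(s))=\{c_j:s_j\le s\}$, so the expansion uses precisely the elements of $\mathcal{C}_k\cap Z_i(\mathcal{K}(t_k))\cap B_i(\mathcal{K}(s))$. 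Linear independence of $\mathcal{C}_k$ is inherited from the quotient construction, and the dimension count \eqref{eqgoodbases} follows.

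I expect the lifting step to be the main point of care: one must choose $c_j$ so that its death time equals $s_j$ on the nose rather than being strictly smaller. This is exactly what the minimality of $s_j$ in the one-parameter filtration of $Q_k$ provides, and it is what synchronizes the elementary quotient argument with the double-parameter condition in the lemma. The remaining ingredients are standard: the reduction to finitely many critical values, the construction of a basis compatible with a single increasing one-parameter filtration, and the downward induction through the quotient tower.
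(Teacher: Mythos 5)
Your proof is correct, but it takes a genuinely different route from the paper's. You organize the construction by birth level: induct over the finitely many critical values $t_k$ of $\kappa$, pass to the quotients $Q_k=Z_i(\mathcal{K}(t_k))/Z_i(\mathcal{K}(t_{k-1}))$, choose a basis of $Q_k$ compatible with the one-parameter death filtration $W_k(s)$, and lift each basis vector into $B_i(\mathcal{K}(s_j))\cap Z_i(\mathcal{K}(t_k))$ so that its death time is calibrated to $s_j$; the two-parameter statement then follows by a telescoping expansion down the quotient tower. The paper instead argues globally: it preorders all of $Z_i(\mathcal{K})$ lexicographically by $(b(c),d(c))$, greedily picks each new basis vector as a minimal cycle outside the span of the previously chosen ones, and then proves by a minimal-counterexample argument (splitting the support of an expansion into the strictly-earlier-birth part and the equal-birth part) that every expansion respects both birth and death. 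Your construction is more structural --- it is essentially the interval-decomposition normal form for the persistence module, with the birth quotient and the death filtration made explicit --- at the price of a two-stage induction and the lifting step you rightly flag; the paper's greedy argument is shorter and avoids quotients, but the compatibility of the greedy basis with all the subspaces is less transparent and needs the extremal-counterexample trick. Two small points to tighten in your write-up: the base case should be the zero subspace strictly before $t_1$ (writing $Z_i(\mathcal{K}(0))=0$ is not quite right if $\kappa$ attains the value $0$), and to pass from the level-$t_k$ property of $\mathcal{C}_k$ to the full statement for $\mathcal{C}=\mathcal{C}_m$ one should note that elements of $\mathcal{B}_{k'}$ with $k'>k$ project nontrivially to $Q_{k'}$, hence do not lie in $Z_i(\mathcal{K}(t_k))$, so that $\mathcal{C}\cap Z_i(\mathcal{K}(t_k))=\mathcal{C}_k\cap Z_i(\mathcal{K}(t_k))$; also, the exact calibration $d(c_j)=s_j$ is convenient but not strictly necessary, since containment in the subspace, linear independence of $\mathcal{C}_k$, and your spanning expansion already force $\mathcal{C}_k\cap Z_i(\mathcal{K}(t_k))\cap B_i(\mathcal{K}(s))$ to be a basis.
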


The verbose persistent diagram $\xi_{\Ver,i}=\xi_{\Ver,i}(\mathcal{K},\kappa)$ is defined as the probability measure
\[\xi_{\Ver,i}=\frac{1}{\dim Z_i(\mathcal{K})} \sum_{c\in \mathcal{C}} \delta_{(b(c),d(c))},\]
where $\delta_{(b(c),d(c))}$ is the Dirac-measure on $(b(c),d(c))$. Since $0\le b(c)\le d(c)$, the support of the measure $\xi_{\Ver,i}$ is contained in 
\[\overline{\Delta}=\{(x,y)\in [0,\infty)^{2}\,:\,x\le y\}.\]

By equation \eqref{eqgoodbases}, for all $r,s\in [0,\infty]$, we have
\begin{equation}\label{measurevsBetti0}\xi_{\Ver,i}(([0,r]\times [0,s])\cap\overline{\Delta})=\frac{\dim Z_i(\mathcal{K}(r))\cap B_i(\mathcal{K}(s))}{\dim Z_i(\mathcal{K})}.\end{equation}
It is a standard fact that the equations in \eqref{measurevsBetti0} uniquely determine the measure $\xi_{\Ver,i}$. Note that the right hand side of \eqref{measurevsBetti0} does not depend on $\mathcal{C}$. Thus, $\xi_{\Ver,i}$ does not depend on the choice of the basis $\mathcal{C}$. As before, persistence diagrams are simply defined as the restrictions of verbose persistence diagrams to $\Delta$.

The persistent Betti numbers defined in the introduction can be expressed with the help of the verbose persistence diagram as follows:
\begin{align*}\frac{\beta_i^{r,s}(\mathcal{K},\kappa)}{\dim Z_i(\mathcal{K})}&=\frac{\dim Z_i(\mathcal{K}(r))-\dim Z_i(\mathcal{K}(r))\cap B_i(\mathcal{K}(s))}{\dim Z_i(\mathcal{K})}%\\&=\frac{\dim Z_i(\mathcal{K}(r))-\dim Z_i(\mathcal{K}(r))\cap B_i(\mathcal{K}(s))}{\dim Z_i(\mathcal{K})}
\\&=\xi_{\Ver,i}(([0,r]\times [0,\infty))\cap\overline{\Delta})-\xi_{\Ver,i}(([0,r]\times [0,s])\cap\overline{\Delta})\\&=\xi_{\Ver,i}(([0,r]\times (s,\infty))\cap\overline{\Delta}).
\end{align*}
This formula is true for all choices of $r,s\in [0,\infty)$. Provided that $r\le s$, the formula remains true even if we replace the verbose persistence diagram with the persistence diagram. 
 This formula also shows that our definition of verbose persistence diagrams is equivalent to other possible definitions, see for example \cite[Proposition 4.8.]{joe2025limit}. 

 In our setup, the notion of verbose persistence diagrams is more natural than the notion of persistence diagrams. However, there are other ways to build the theory of persistent homology, where this is the other way around. For example, persistence diagrams can be defined as follows. For $0\le r\le s$, the natural inclusion $\iota^{r,s}:\mathcal{K}(r)\xhookrightarrow{} \mathcal{K}(s)$ induces a homomorphism $\iota_*^{r,s}:H_i(\mathcal{K}(r))\to H_i(\mathcal{K}(s))$. The homology groups $\left(H_i(\mathcal{K}(r))\right)_{0\le r}$ together with the homomorphisms  $\left(\iota_*^{r,s}\right)_{0\le r\le s}$ form an algebraic structure called the \emph{persistence module}. This persistence module can be decomposed as the direct sum of interval modules~\cite{crawley2015decomposition}. Each summand in this decomposition corresponds to an atom of the persistence diagram. In particular, the persistence diagram can be obtained from the persistence module. However, the atoms of the verbose  persistence diagram lying on the diagonal cannot be recovered from only the persistence module.

\subsection{Topological data analysis -- A motivation for studying persistent homology}

Let $S$ be a finite set of points in $\mathbb{R}^m$. We would like to make a formal sense of the question: What is the shape of the set $S$? One possibility is to choose a radius $r\ge 0$ and consider the union of the $r$-balls centered around the points of $S$, that is, to consider the set
\[S_r=\bigcup_{s\in S} B_r(s),\]
then try to understand the homological properties of this set $S_r$. Of course, we should choose the radius $r$ carefully. If $r$ is too small, then $S_r$ is just a disjoint union of balls. If $r$ is too large, then $S_r$ is contractible. To overcome the problem of the choice of the radius $r$, we can just consider all the possible choices of $r$, and try to understand the evolution of $S_r$ as $r$ grows. As we increase $r$, new holes can appear in $S_r$, while other holes might get filled in. We can use persistent homology to capture the changes of the homological features of $S_r$. 

\emph{\v{C}ech complexes} are useful to understand the homology of $S_r$. Given $S$ and $r$, the \v{C}ech complex $\mathcal{K}(r)$ is defined as the simplicial complex on the vertex set $S$, where a subset $\sigma$ of $S$ is present in the complex~$\mathcal{K}(r)$ if and only if the $r$-balls centered around the points of $\sigma$ have a nonempty intersection. By the nerve lemma, the \v{C}ech complex $\mathcal{K}(r)$ is homotopy equivalent to~$S_r$. Thus, the homology of $\mathcal{K}(r)$ is the same as the homology of $S_r$. Note that the complexes~$\mathcal{K}(r)$ can be obtained as a filtration of the simplex on the vertex set $S$ by setting
\[\kappa(\sigma)=\min_{o\in \mathbb{R}^m} \max_{s\in \sigma} \|o-s\|_2.\]

\emph{Topological data analysis} studies the persistent homology of point sets coming from real life data. An important question is whether a certain feature of the persistent homology tells us something significant  about our dataset or  is just a result of some noise in the data. Thus, we would like to be able to compare the behavior of our dataset with a random one. This motivates the study of the persistent homology of random filtrations.

The model studied by us along with \emph{clique complexes}~\cite{ababneh2024maximal,kahle2009topology,kahle2014sharp,malen2023collapsibility} can be considered as the simplest mean-field model of a random filtration. Filtrations defined from random point processes were also studied a lot~\cite{bobrowski2022homological,bobrowski2017maximally,bobrowski2023universal,edelsbrunner2024maximum,hiraoka2018limit}. Although we have strong results on the limiting behavior of the persistent homology of these geometric complexes, we do not have such explicit formulas in these cases as in Theorem~\ref{thm1}. 

\subsection{Recovering the results of Linial and Peled}\label{secLP}

Let $\overline{\mathcal{Y}_n}(c)$ be obtained from  ${\mathcal{Y}_n}(c)$ by adding all the subsets of $[n]$ of size at most $k$. That is, $\overline{\mathcal{Y}_n}(c)$ is just the usual Linial-Meshulam complex, which was the model studied by Linial and Peled~\cite{linial2016phase}.

Let $c>0$. A straightforward calculation shows that $\Lambda_{1,c}'(t)=0$ if and only if $t$ satisfies the fixed point equation $t=\exp(-c(1-t)^k)$. Moreover, $\Lambda_{1,c}'(0)>0$ and $\Lambda_{1,c}'(1)=0$. Thus, the maximum of $\Lambda_{1,c}$ in the interval $[0,1]$ is attained at one of the elements of the set \[F_c=\left\{t\in [0,1]\,:\,t=\exp(-c(1-t)^k)\right\}.\]

Therefore,
\begin{align}
\lambda_{1,c}&=\max_{t\in F_c}\left(\exp(-c(1-t)^k)-\frac{c}{(k+1)}\left(1-(1-t)^{k+1}-(k+1)t(1-t)^k\right)\right)\nonumber\\&=\max_{t\in F_c}\left(t-\frac{c}{(k+1)}\left(1-(1-t)^{k+1}-(k+1)t(1-t)^k\right)\right).\label{lpe0}
\end{align}

Also, observe that
\begin{multline}\label{lpe1}\dim H_k(\overline{\mathcal{Y}_n}(c))=\dim Z_k(\overline{\mathcal{Y}_n}(c))=\dim Z_k({\mathcal{Y}_n}(c))=\dim C_{k}({\mathcal{Y}_n}(c))-\dim B_{k-1} ({\mathcal{Y}_n}(c))\\=\dim C_{k}({\mathcal{Y}_n}(c))-\dim Z_{k-1}({\mathcal{Y}_n}(c))+\dim H_{k-1}(\mathcal{Y}_n(c)).\end{multline}

With some effort, one can see that
\begin{equation}\label{lpe2}\lim_{n\to\infty}\frac{\dim C_{k}({\mathcal{Y}_n}(c))-\dim Z_{k-1}({\mathcal{Y}_n}(c))}{{{n}\choose{k}}}=\frac{c}{k+1}-1+\exp(-c)\quad\text{ in probability,}\end{equation}
see also~\eqref{Zlimit}.

Combining \eqref{lpe1} and \eqref{lpe2} with Theorem~\ref{thm1}, we see that
\[\lim_{n\to\infty}\frac{\dim H_k(\overline{\mathcal{Y}_n}(c))}{{{n}\choose{k}}}=\frac{c}{k+1}-1+\lambda_{1,c} \quad\text{ in probability.}\]

Here, by \eqref{lpe0}, the right hand side can be written as
\begin{equation}\label{LiPemax}\max_{t\in F_c}\left(ct(1-t)^k+\frac{c}{(k+1)}(1-t)^{k+1}-(1-t)\right).\end{equation}

Linial and Peled~\cite{linial2016phase} proved that there is a $c_k^*$ such that if $c\le c_k^*$, then the maximum in \eqref{LiPemax} is attained at $t=1$, and if $c>c_k^*$, then the maximum is attained at the smallest element of $F_c$. Thus, we have the following theorem:
\begin{theorem}[Linial and Peled~\cite{linial2016phase}]\hfill

\begin{enumerate}[(a)]
    \item Assume that $c\le c_k^*$, then
    \[ \lim_{n\to\infty}\frac{\dim H_k(\overline{\mathcal{Y}_n}(c))}{{{n}\choose{k}}}=0 \quad\text{ in probability.}\]
    \item Assume that $c> c_k^*$, then
    \[ \lim_{n\to\infty}\frac{\dim H_k(\overline{\mathcal{Y}_n}(c))}{{{n}\choose{k}}}=ct_c(1-t_c)^k+\frac{c}{(k+1)}(1-t_c)^{k+1}-(1-t_c) \quad\text{ in probability,}\]
    where $t_c$ is the smallest solution of $t=\exp(-c(1-t)^k)$.

\end{enumerate}
    
\end{theorem}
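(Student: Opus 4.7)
The plan is to assemble the ingredients already displayed in this section. Specializing Theorem~\ref{thm1} at $r=s=c$ (the two branches of $\hat{\beta}_{k-1}^{r,s}$ agree there) gives
\[\lim_{n\to\infty} \frac{\dim H_{k-1}(\mathcal{Y}_n(c))}{{{n}\choose{k}}} = \lambda_{1,c} - \exp(-c)\]
in probability, since the persistent Betti number reduces to the ordinary one when the two time parameters coincide. Substituting this into the Euler-characteristic identity \eqref{lpe1} together with the limit \eqref{lpe2} yields
\[\lim_{n\to\infty} \frac{\dim H_k(\overline{\mathcal{Y}_n}(c))}{{{n}\choose{k}}} = \frac{c}{k+1} - 1 + \lambda_{1,c}\]
in probability. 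By the rewriting \eqref{lpe0} of $\lambda_{1,c}$ as a maximum over the fixed-point set $F_c$, this limit coincides with the quantity in \eqref{LiPemax}.

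The remaining step is to locate where the maximum in \eqref{LiPemax} is attained. Observe that $t=1$ always lies in $F_c$ and that the integrand in \eqref{LiPemax} vanishes at $t=1$, so the limit is automatically nonnegative. The dichotomy claimed in the theorem therefore reduces to a purely deterministic fixed-point analysis: identify the threshold $c_k^*$ below which the maximum equals $0$ (attained at $t=1$) and above which the maximum is strictly positive and attained at the smallest element of $F_c$. This analysis is precisely what Linial and Peled carry out in \cite{linial2016phase}, and quoting their conclusion directly converts the formula above into the two cases (a) and (b).

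The main difficulty, if one had to reproduce it from scratch rather than import it from \cite{linial2016phase}, would be the careful study of the fixed-point equation $t = \exp(-c(1-t)^k)$: showing that its smallest solution $t_c$ is a monotone function of $c$, that the value of $g(t) := ct(1-t)^k + \tfrac{c}{k+1}(1-t)^{k+1} - (1-t)$ at $t_c$ crosses zero exactly once as $c$ increases, and that no intermediate solution can be the maximizer. In the present context this is already packaged for us, so the contribution of this subsection is only the reduction from Theorem~\ref{thm1} to the variational problem \eqref{LiPemax} via the Euler-characteristic identity and the algebraic rewriting \eqref{lpe0}.
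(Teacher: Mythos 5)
Your proposal is correct and follows essentially the same route as the paper: specialize Theorem~\ref{thm1} at $r=s=c$, feed the resulting limit for $\dim H_{k-1}(\mathcal{Y}_n(c))$ into the identity \eqref{lpe1} together with \eqref{lpe2}, rewrite $\lambda_{1,c}$ via \eqref{lpe0} as the variational expression \eqref{LiPemax}, and then invoke Linial--Peled's analysis of where that maximum is attained. This matches the paper's argument step for step, including treating the location of the maximizer (the threshold $c_k^*$) as imported from \cite{linial2016phase} rather than re-derived.
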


\medskip
\textbf{Acknowledgments.} The author was supported by the Marie Skłodowska-Curie Postdoctoral Fellowship "RaCoCoLe".
\section{Preliminaries}

\subsection{Local weak convergence of graphs}\label{seclocalweak}

In this paper, all graphs are assumed to be locally finite, that is, all graphs are assumed to have finite degrees.

A \emph{rooted graph} $(G,o)$ is a (possible infinite, but locally finite) connected graph $G$ together with a distinguished vertex $o$ of~$G$ called the root. The depth of $(G,o)$ is defined as the maximum of the distances of all the vertices from the root. Two rooted graphs are isomorphic, if there is a graph isomorphism between them that also preserves the root. The space consisting of all the isomorphism classes of rooted graphs can be endowed with a measurable structure. Thus, it makes sense to speak about random rooted graphs. We omit the details here, which can be found in \cite{aldous2019processes}. 

Given a graph $G$, a vertex $o$ of $G$ and a radius $r$, we use $B_r(G,o)$ to denote the subgraph of $G$ induced by the vertices of $G$ which are at distance at most $r$ from $o$. Choosing $o$ as the root, we can turn $B_r(G,o)$ into a rooted graph. With a slight abuse of notation, we also use $B_r(G,o)$ to denote this rooted graph.

We say that a sequence of random rooted graphs $(G_n,o_n)$ converges locally to a random rooted graph $(G,o)$ if for all non-negative integers $r$, and all rooted graphs $(H,q)$ of depth at most $r$, we have
\[\lim_{n\to\infty}\mathbb{P}(B_r(G_n,o_n)\cong (H,q))=\mathbb{P}(B_r(G,o)\cong (H,q)).\]
Here, the symbol $\cong$ refers to a rooted isomorphism.

By a set-rooted graph, we mean a pair $(G,U)$, where $G$ is a finite graph and $U$ is a non-empty subset of the vertices of $G$. With each set-rooted graph $(G,U)$, we can associate a random rooted graph $(G_o,o)$, where $o$ is a uniform random element of $U$, and $G_o$ is the connected component of $o$ in the graph $G$. A sequence of set-rooted graphs $(G_n,U_n)$ converges to a random rooted graph $(G,o)$ if the random rooted graphs associated to $(G_n,U_n)$ converge to $(G,o)$.

Next we give an equivalent definition of the convergence of set-rooted graphs which will be easier to extend to the case when the pair $(G_n,U_n)$ is random.

For a set-rooted graph $(G,U)$, a radius $r$ and rooted graph $(H,q)$ of depth at most $r$, we define
\[t_r(G,U,(H,q))=\frac{|\{o\in U\,:\,B_r(G,o)\cong (H,q)\}|}{|U|}.\]

Note that a sequence of set-rooted graphs $(G_n,U_n)$ converges to a random rooted graph~$(G,o)$ if and only if
\[\lim_{n\to\infty} t_r(G,U,(H,q))=\mathbb{P}(B_r(G,o)\cong (H,q))\]
for all $r$ and $(H,q)$ as above.

Now, let $(G_n,U_n)$ be a sequence of random set-rooted graphs. Then $t_r(G_n,U_n,(H,q))$ is a random variable. We say that $(G_n,U_n)$ converges to a random rooted graph $(G,o)$ in probability if
\[\lim_{n\to\infty} t_r(G,U,(H,q))=\mathbb{P}(B_r(G,o)\cong (H,q))\text{ in probability}\]
for all $r$ and $(H,q)$ as above.

A rooted decorated graph is a triple $((G,S),o)$, where $(G,o)$ is a rooted graph and $S$ is a subset of the vertices of $G$. Two rooted decorated graphs $((G_1,S_1),o_1)$ and $((G_2,S_2),o_2)$ are isomorphic, if there is a graph isomorphism $\phi:V(G_1)\to V(G_2)$ such that $\phi(o_1)=o_2$ and $\phi(S_1)=S_2$. We can extend the notions defined above to decorated graphs as well. First, the space of (isomorphism classes of) rooted decorated graphs can be endowed with a measurable structure~\cite{aldous2019processes}. Thus, we can speak about random rooted decorated graphs.

A set-rooted decorated graph is a triple $((G,S),U)$, where $G$ is a finite graph, $S,U$ are  subsets of the vertices of $G$ such that $U\neq \emptyset$. Given $r$, and a rooted decorated graph  $((H,T),q)$ of depth at most $r$, we define
\[t_r((G,S),U,((H,T),q))=\frac{|\{o\in U\,:\,((B_r(G,o),S\cap V(B_r(G,o))),o)\cong ((H,T),q)\}|}{|U|}.\]

Let $((G_n,S_n),U_n)$ be a sequence of set-rooted decorated graphs, we say that $((G_n,S_n),U_n)$ converges to a random rooted decorated graph $((G,S),o)$ if for all radius $r$, and all rooted decorated graph $((H,T),q)$ of depth at most $r$, we have
\[\lim_{n\to\infty} t_r((G_n,S_n),U,((H,T),q))=\mathbb{P}(((B_r(G,o),S\cap V(B_r(G,o))),o)\cong((H,T),q)).\]

The definition of convergence can be extended to the case when $((G_n,S_n),U_n)$ is random in the same way as in the undecorated case.

The proofs of the lemmas below are straightforward.

\begin{lemma}\label{localweak1}
Let $((G_n,S_n),U_n)$ be a sequence of random set-rooted decorated graphs converging to $((G,S),o)$ in probability.  Let $p\in [0,1]$, and define $S_n'$ as a random subset of $S_n$, where every element of $S_n$ is kept with probability $p$ independently. We define the random subset $S'$ of $S$ analogously. 

Assuming that $|U_n|\to\infty$ in probability, the random set-rooted decorated graphs $((G_n,S_n'),U_n)$ converge to $((G,S'),o)$ in probability.

\end{lemma}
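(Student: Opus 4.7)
The plan is to fix a radius $r\ge 0$ and a test rooted decorated graph $((H,T),q)$ of depth at most $r$, and to prove
\[t_r((G_n,S_n'),U_n,((H,T),q))\xrightarrow{\mathbb{P}}\hat{t}:=\mathbb{P}\bigl((B_r(G,o),S'\cap V(B_r(G,o)),o)\cong ((H,T),q)\bigr).\]
Writing $X_v=\mathbf{1}[(B_r(G_n,v),S_n'\cap V(B_r(G_n,v)),v)\cong ((H,T),q)]$ so that $t_r=|U_n|^{-1}\sum_{v\in U_n}X_v$, I would work conditionally on $(G_n,S_n,U_n)$, with only the independent sub-sampling remaining random.

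For the conditional expectation, observe that $\mathbb{E}[X_v\mid G_n,S_n,U_n]$ depends on $v$ only through the isomorphism type of the un-sub-sampled decorated ball $(B_r(G_n,v),S_n\cap V(B_r(G_n,v)),v)$: if this ball is isomorphic to $((H,T'),q)$ then $\mathbb{E}[X_v\mid G_n,S_n,U_n]=\alpha_{T',T}$, where $\alpha_{T',T}\in[0,1]$ is the fixed probability (depending only on $H,q,T',T,p$) that a $p$-sub-sample of the decoration $T'$ yields a decoration equivalent to $T$. Since only finitely many isomorphism types of decorated ball with underlying rooted graph $(H,q)$ arise, averaging over $v$ gives a finite linear combination
\[\mathbb{E}[t_r\mid G_n,S_n,U_n]=\sum_{T'}\alpha_{T',T}\cdot t_r((G_n,S_n),U_n,((H,T'),q)),\]
each summand of which converges in probability to $\mathbb{P}((B_r(G,o),S\cap V(B_r(G,o)),o)\cong ((H,T'),q))$ by the assumed convergence of $((G_n,S_n),U_n)$. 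The identical decomposition applied to the limit measure yields $\mathbb{E}[t_r\mid G_n,S_n,U_n]\xrightarrow{\mathbb{P}}\hat{t}$.

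For concentration of $t_r$ around its conditional expectation, the decisive fact is that $X_v$ and $X_w$ are conditionally independent whenever $B_r(G_n,v)\cap B_r(G_n,w)=\emptyset$, i.e.\ whenever $d_{G_n}(v,w)>2r$. Consequently
\[\mathrm{Var}(t_r\mid G_n,S_n,U_n)\le\frac{1}{|U_n|^2}\sum_{v\in U_n}|B_{2r}(G_n,v)|.\]
To make this small I would truncate: given $\varepsilon>0$, pick $M$ with $\mathbb{P}(|B_{2r}(G,o)|>M)<\varepsilon$ (possible because $G$ is a.s.\ locally finite), and split $t_r=Y_M+Z_M$, where $Y_M$ restricts the sum to $U_n^M:=\{v\in U_n:|B_{2r}(G_n,v)|\le M\}$. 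Since both $\mathbf{1}[|B_{2r}|\le M]$ and the decorated $r$-ball type are functions of the decorated $2r$-ball, $\mathbb{E}[Y_M\mid G_n,S_n,U_n]$ is again a finite linear combination of $t_{2r}((G_n,S_n),U_n,\cdot)$ and therefore converges in probability to the truncated limit $\hat{t}_M$; meanwhile the sharper variance bound $\mathrm{Var}(Y_M\mid G_n,S_n,U_n)\le M/|U_n|$ vanishes because $|U_n|\to\infty$, so Chebyshev gives $Y_M\xrightarrow{\mathbb{P}}\hat{t}_M$. On the other hand $0\le Z_M\le 1-|U_n^M|/|U_n|$ concentrates around $\mathbb{P}(|B_{2r}(G,o)|>M)<\varepsilon$, and likewise $\hat{t}-\hat{t}_M<\varepsilon$, yielding $|t_r-\hat{t}|<3\varepsilon$ with high probability. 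Since $\varepsilon$ was arbitrary, $t_r\xrightarrow{\mathbb{P}}\hat{t}$.

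The main obstacle, though minor, lies in the variance estimate: local weak convergence does not by itself yield a uniform bound on $|B_{2r}(G_n,v)|$, so the naive second-moment bound need not vanish. The truncation at $M$ circumvents this by exploiting tightness of the ball-size distribution in the limit, and combined with $|U_n|\to\infty$ it supplies enough concentration for Chebyshev's inequality to close the argument.
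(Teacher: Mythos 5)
Your argument is correct, and it supplies exactly the kind of routine verification the paper omits (Lemma \ref{localweak1} is stated with the remark that its proof is straightforward): a conditional first-moment computation decomposing $\mathbb{E}[t_r\mid G_n,S_n,U_n]$ over the finitely many decorated ball types, followed by a Chebyshev concentration step using conditional independence of vertices at distance greater than $2r$ and the hypothesis $|U_n|\to\infty$. Your truncation at ball size $M$ correctly handles the only genuine wrinkle (the untruncated variance bound need not vanish when ball sizes have heavy tails), so nothing is missing.
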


\begin{lemma}\label{localweak2}
Let $((G_n,S_n),U_n)$ be random set-rooted decorated graphs converging to $(
(G,S),o)$ in probability. 
Let
\[S_n'=\cup_{s\in S_n} V(B_1(G_n,s)) \text{ and }S'=\cup_{s\in S} V(B_1(G,s)).\]

Then
$((G_n,S_n'),U_n)$ converges to $((G,S'),o)$ in probability.

\end{lemma}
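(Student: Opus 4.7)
My plan is to reduce the radius-$r$ statistics of the decorated graph $((G_n,S_n'),U_n)$ to the radius-$(r+1)$ statistics of $((G_n,S_n),U_n)$, and then invoke the hypothesis directly.

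The key observation is that for any $o\in U_n$ and any vertex $v$ with $d_{G_n}(o,v)\le r$, the condition $v\in S_n'$ is equivalent to the existence of some $s\in S_n$ with $s=v$ or $s$ adjacent to $v$, and every such $s$ lies within distance $r+1$ of $o$. Consequently, the isomorphism class of $((B_r(G_n,o),S_n'\cap V(B_r(G_n,o))),o)$ is a deterministic function $\Psi$ of the isomorphism class of $((B_{r+1}(G_n,o),S_n\cap V(B_{r+1}(G_n,o))),o)$, where $\Psi$ is defined purely combinatorially by truncating to the radius-$r$ subball and replacing the decoration by its closed $1$-neighborhood intersected with that subball. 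The same identity applies to $((G,S),o)$.

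From this, for any rooted decorated graph $((H',T'),q)$ of depth at most $r$, I obtain
\[t_r((G_n,S_n'),U_n,((H',T'),q)) = \sum_{((H,T),q)\in \Psi^{-1}(((H',T'),q))} t_{r+1}((G_n,S_n),U_n,((H,T),q)),\]
and the analogous identity holds on the limiting side. Each individual term on the right converges in probability to the corresponding limiting probability by the hypothesis, so summing these contributions over the common preimage yields the desired convergence.

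The main (minor) technical point is that the preimage $\Psi^{-1}(((H',T'),q))$ is in general countably infinite, since depth-$(r+1)$ balls can have unbounded maximum degree. I would handle this by a truncation argument: fix a cutoff $D$, restrict the sum to the finite subset of preimages with maximum degree $\le D$ in the depth-$(r+1)$ ball, apply term-by-term convergence in probability to this finite sum, and then let $D\to\infty$. The tail is bounded uniformly by the probability that $B_{r+1}(G,o)$ contains a vertex of degree exceeding $D$, which tends to $0$ by local finiteness of $G$. A standard $\varepsilon$-argument then completes the proof.
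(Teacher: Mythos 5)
Your proof is correct and is exactly the ``straightforward'' argument the paper has in mind (the paper omits the proof of Lemma~\ref{localweak2} entirely, declaring it straightforward): the radius-$r$ decorated ball for $S_n'$ is a deterministic, isomorphism-invariant function of the radius-$(r+1)$ decorated ball for $S_n$, so one just pushes the $t_{r+1}$-statistics forward under $\Psi$. Your degree-cutoff truncation correctly disposes of the only subtlety, namely that the preimage $\Psi^{-1}(((H',T'),q))$ is countably infinite, since the fraction of roots whose $(r+1)$-ball contains a vertex of degree exceeding $D$ converges in probability to the corresponding limit probability (a finite sum of convergent terms subtracted from $1$), which vanishes as $D\to\infty$ by local finiteness.
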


\begin{lemma}\label{localweak3}
Let $((G_n,S_n),U_n)$ be a sequence of random set-rooted decorated graphs converging  to $((G,S),o)$ in probability. 

Let $G_n'$ be the graph obtained from $G_n$ by deleting all the vertices in $S_n$, and let $U_n'=U_n\setminus S_n$. On the event $o\notin S$, let  $G^-$ be the graph obtained from $G$ by deleting all the vertices in $S$ and only keeping the connected component of the root in the resulting graph. Assume that the event $o\notin S$ has positive probability. Let $(G',o')$ have the distribution of $(G^-,o)$ conditioned on the event that $o\notin S$.

Then $(G_n',U_n')$ converges to $(G',o')$ in probability.
\end{lemma}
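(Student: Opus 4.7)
The plan is to verify the defining condition of local weak convergence for $(G_n', U_n')$ directly. Fix a radius $r$ and a rooted graph $(H, q)$ of depth at most $r$; the goal is to show that $t_r(G_n', U_n', (H, q))$ converges in probability to $\mathbb{P}(B_r(G', o') \cong (H, q))$. The starting point is the following elementary deterministic observation: for any graph $G$, any vertex subset $S$, and any vertex $v \notin S$, the ball $B_r(G \setminus S, v)$ (where $G \setminus S$ denotes the induced subgraph on $V(G) \setminus S$) is entirely determined by the decorated rooted ball $((B_r(G, v), S \cap V(B_r(G, v))), v)$, since any walk of length at most $r$ from $v$ in $G \setminus S$ stays inside $B_r(G, v)$ and avoids $S$. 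Applying this to $(G_n, S_n)$ computes $B_r(G_n', v)$, and applying it to $(G, S)$ on the event $o \notin S$ computes $B_r(G^-, o)$.

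Let $\mathcal{T}(H, q)$ be the set of isomorphism classes of rooted decorated graphs $((K, T), p)$ of depth at most $r$ with $p \notin T$ such that the $r$-ball around $p$ in $K$ with $T$ deleted is isomorphic to $(H, q)$. By the observation, for every $n$,
\[
\frac{|\{v \in U_n' : B_r(G_n', v) \cong (H, q)\}|}{|U_n|} = \sum_{((K, T), p) \in \mathcal{T}(H, q)} t_r((G_n, S_n), U_n, ((K, T), p)),
\]
and each individual term on the right converges in probability to $\mathbb{P}(((B_r(G, o), S \cap V(B_r(G, o))), o) \cong ((K, T), p))$ by the assumed convergence of $((G_n, S_n), U_n)$. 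After exchanging the infinite sum with the limit (see the last paragraph), the left-hand side converges in probability to $\mathbb{P}(o \notin S,\ B_r(G^-, o) \cong (H, q))$. The depth-$0$ choice $((K, T), p) = ((\{p\}, \emptyset), p)$ in the same convergence gives $|U_n'|/|U_n| \to \mathbb{P}(o \notin S)$ in probability, and since $\mathbb{P}(o \notin S) > 0$, dividing yields the desired $t_r(G_n', U_n', (H, q)) \to \mathbb{P}(B_r(G', o') \cong (H, q))$.

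The only nontrivial step is justifying the exchange of limit and infinite sum over $\mathcal{T}(H, q)$: this set is typically infinite, because a decorated vertex adjacent to a ``visible'' vertex near the root may appear with arbitrary multiplicity without affecting the reduced $r$-ball. I would handle this by a standard truncation. For each $D$, let $\mathcal{T}_D(H, q) \subseteq \mathcal{T}(H, q)$ consist of those classes in which every vertex of $K$ has degree at most $D$; this finite subfamily gives a convergent partial sum by finite additivity. The complementary tail is bounded by the fraction of $v \in U_n$ such that $B_r(G_n, v)$ contains a vertex of degree greater than $D$, which is itself a continuous local function of $(G_n, v)$. By the convergence hypothesis this tail converges in probability to $\mathbb{P}(\exists w \in V(B_r(G, o)) : \deg_G(w) > D)$, which tends to $0$ as $D \to \infty$ by the almost sure local finiteness of $(G, o)$. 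I expect this truncation step to be the main (and only) technical point in the proof.
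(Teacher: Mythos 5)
Your proposal is correct. The paper does not actually supply a proof of this lemma (it is dismissed as straightforward), and your argument is exactly the natural direct verification: the reduced $r$-ball at a surviving vertex is a function of the decorated $r$-ball, sum over the compatible decorated classes, and divide by $|U_n'|/|U_n|\to\mathbb{P}(o\notin S)>0$ (noting that $U_n'\neq\emptyset$ with probability tending to $1$). The one point worth spelling out in the truncation step is why the tail fraction converges: the complement event ``every vertex of $B_r(G_n,v)$ has degree at most $D$ in the ball'' is a union of only \emph{finitely} many isomorphism classes of rooted graphs of depth at most $r$, so its frequency converges in probability by finite additivity, and hence so does the tail; the tail of the limiting sum is likewise bounded by $\mathbb{P}\bigl(\exists w\in V(B_r(G,o)):\deg w>D\bigr)$, which vanishes as $D\to\infty$ by local finiteness. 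With that remark your truncation argument is complete.
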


\subsection{Multi-type Galton-Watson trees}\label{SecGW}

Given a probability measure $\nu=(\nu_i)_{i\ge 0}$ on the set of nonnegative integers with finite and nonzero mean $\bar{\nu}$, we define the probability measure $\nu'=(\nu_i')_{i\ge 0}$ on the set of nonnegative integers by the formula
\[\nu_i'=\frac{1}{\bar{\nu}}(i+1)\nu_{i+1}.\]
We also consider the generating function
\[f(\nu,t)=\sum_{i=0}^\infty\nu_i t^i.\]

Note that 
\begin{equation*}
f(\nu',t)=\frac{1}{\bar{\nu}}\frac{d}{dt} f(\nu,t).
\end{equation*}

Let $\mu=(\mu_i)_{i\ge 0}$ and $\nu=(\nu_i)_{i\ge 0}$ be two probability measures on the set of nonnegative integers with finite second moments.

We define a $\GW(\mu',\nu')$-tree as a multi-type \emph{Galton-Watson tree} such that if a vertex is at even distance from the root, then the number of children of this vertex is chosen according to~$\mu'$, moreover, if a vertex is at odd distance from the root, then the number of children of this vertex is chosen according to $\nu'$. 

We define a $\GW_*(\mu,\nu)$-tree the same way as a $\GW(\mu',\nu')$-tree except that the number of the children of the root is chosen according to $\mu$ instead of $\mu'$.

\section{The convergence of persistent Betti numbers}

\subsection{A theorem on the rank of sparse of matrices}\label{SecRankThmStatement}

Let $M$ be a matrix over $\mathbb{R}$. We assume that the rows of $M$ are indexed by the set $R$ and the columns of $M$ are indexed by the set $C$, where $R$ and $C$ are disjoint. The \emph{Tanner graph} of $M$ is a bipartite graph $G$ with color classes $R$ and $C$ such that for all $r\in R$ and $c\in C$, $rc$ is an edge of $G$ if and only if $M(r,c)\neq 0$.

% Let us recall a few notations from Section~\ref{SecGW}. Given a probability measure $\nu=(\nu_i)_{i\ge 0}$ on the set of nonnegative integers with finite mean $\bar{\nu}$, we define the probability measure $\nu'=(\nu_i')_{i\ge 0}$ on the set of nonnegative integers by the formula
% \[\nu_i'=\frac{1}{\bar{\nu}}(i+1)\nu_{i+1}.\]
% We also consider the generating function
% \[f(\nu,t)=\sum_{i=0}^\infty\nu_i t^i.\]

Let $\mu=(\mu_i)_{i=0}^\infty$ and $\nu=(\nu_i)_{i=0}^\infty$ be two probability distributions with finite second moments such that $\bar{\mu}>0$ and $\bar{\nu}>0$. We define
\[\Lambda(t)=f(\mu,1-f(\nu',1-t))-\frac{\bar{\mu}}{\bar{\nu}}\left(1-f(\nu,1-t)-\bar{\nu}t f(\nu',1-t)\right),\]
where we relied on the notations of Section~\ref{SecGW}.

Let $M_n$ be a sequence of random matrices over $\mathbb{R}$ such that all the entries of $M_n$ are from the set $\{-1,0,+1\}$. We assume that the rows of $M_n$ are indexed by the set $R_n$ and the columns of $M_n$ are indexed by the set $C_n$, where $R_n$ and $C_n$ are disjoint. Let $G_n$ be the Tanner graph~$M_n$. We say that $M_n$ converges locally to a random rooted graph $(G,o)$ in probability, if the random set-rooted graphs $(G_n,C_n)$ converge to $(G,o)$ in probability, in the sense defined in Section~\ref{seclocalweak}.

\begin{theorem}\label{RankThm}
Assume that $M_n$ converges locally to a $\GW_*(\mu,\nu)$-tree in probability.

Let $\alpha$ and $\alpha'$ be the smallest and largest solution of
 \[t=f(\mu',1-f(\nu',1-t))\]
 in the interval $[0,1]$, respectively. 

 Assume that $\max_{t\in [0,1]}\Lambda(t)=\max(\Lambda(\alpha),\Lambda(\alpha'))$.

Then
\[\lim_{n\to\infty} \frac{\rank(M_n)}{|C_n|}=1-\max_{t\in [0,1]}\Lambda(t) \quad\text{ in probability}.\]

\end{theorem}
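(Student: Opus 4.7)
The plan is to generalize the spectral/objective-method approach of Bordenave, Lelarge, and Salez (BLS) on the rank of sparse random matrices to the bipartite two-type setting. The main strategy is: approximate $\rank(M_n)$ by the $\epsilon$-rank $\rank_\epsilon(M_n)$ (the number of singular values exceeding $\epsilon$); show this is a continuous local observable of the weighted Tanner graph; and then compute its limit on the $\GW_*(\mu,\nu)$-tree via distributional fixed-point recursions for the resolvent.

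For fixed $\epsilon>0$, the quantity $\rank_\epsilon(M_n)/|C_n|$ can be written as the integral of $\mathbbm{1}_{[\epsilon,\infty)}$ against the empirical singular-value distribution of $M_n$, and hence is continuous under local weak convergence of the weighted Tanner graph. The assumed convergence $(G_n,C_n)\to(G,o)$ in probability therefore forces $\rank_\epsilon(M_n)/|C_n|$ to converge to a deterministic limit $\rho_\epsilon$ depending only on the $\GW_*(\mu,\nu)$-tree. On this tree, $\rho_\epsilon$ is governed by the Schur-complement recursion for the resolvent $(MM^\top+\epsilon^2 I)^{-1}$, which, thanks to the alternating bipartite structure, splits into two coupled distributional fixed-point equations at column-type and row-type vertices.

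Taking $\epsilon\to 0$, the coupled distributional equations collapse to the scalar fixed point
\[t = f(\mu', 1 - f(\nu', 1-t)),\]
where $t$ represents the probability that a column-rooted subtree propagates the kernel to its parent; the size-biased offspring laws $\mu'$ and $\nu'$ appear because every non-root vertex has a distinguished incoming edge from its parent. Summing local contributions at the root yields the identity $\rank(M_n)/|C_n|\to 1-\max_{t\in[0,1]}\Lambda(t)$: the term $f(\mu,1-f(\nu',1-t))$ encodes the generating function of the root column's \emph{unbiased} $\mu$-offspring, while the correction $-\frac{\bar{\mu}}{\bar{\nu}}(\cdots)$ compensates for the overcount of column-row edges via a mass-transport identity (recall $\bar{\mu}\,|C_n|\approx \bar{\nu}\,|R_n|$ is the number of edges of the Tanner graph).

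The principal obstacle is the fixed-point selection step at the end: when $t = f(\mu', 1 - f(\nu', 1 - t))$ has multiple solutions, one must show that $\lim_{\epsilon\to 0}\rho_\epsilon$ actually corresponds to an \emph{extremal} fixed point. BLS resolve this in the symmetric one-type case via a monotone coupling forcing the iteration to converge to either the smallest or the largest fixed point, depending on the direction of approach; the hypothesis that $\max_{[0,1]}\Lambda$ is attained at $\alpha$ or $\alpha'$ is exactly the bipartite analog of the condition needed to route the iteration to the correct root. A secondary technical point is the "no mass at zero" spectral estimate that justifies the passage $\epsilon\to 0$; this should follow from a tangle-freeness / high-girth argument on the bipartite Tanner graph, carried out analogously to the BLS argument for the one-type case.
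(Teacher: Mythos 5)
There is a genuine gap at the heart of your argument: the purely spectral route only gives \emph{one} of the two inequalities. Local weak convergence of the Tanner graph yields weak convergence of the relevant spectral measures, and by the portmanteau theorem (mass of the closed set $\{0\}$ is upper semicontinuous) this gives
$\limsup_n \dim\ker M_n/|C_n|\le \mathbb{E}\,\eta_{(T,o)}(\{0\})\le \max_{t\in[0,1]}\Lambda(t)$, i.e.\ the lower bound $\liminf_n \rank(M_n)/|C_n|\ge 1-\max_t\Lambda(t)$. It does \emph{not} give the matching upper bound on the rank: mass can escape from the atom at $0$ in the limit, and neither $\rank_\epsilon$ (the indicator $\mathbbm{1}_{[\epsilon,\infty)}$ is not a continuous test function, so $\rank_\epsilon(M_n)/|C_n|$ is not a continuous local observable) nor the order of limits $n\to\infty$, $\epsilon\to 0$ can be controlled by local weak convergence alone. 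Your proposal to close this with a ``tangle-freeness / high-girth argument analogous to BLS'' misidentifies how Bordenave--Lelarge--Salez (and this paper) actually close it: the complementary bound is combinatorial, via the Karp--Sipser leaf-removal procedure, applied both to $M_n$ (giving $\limsup_n\rank(M_n)/|C_n|\le 1-\Lambda(\alpha)$ with $\alpha$ the smallest fixed point) and to $M_n^T$ (giving $\le 1-\Lambda(\alpha')$ with $\alpha'$ the largest fixed point, after translating fixed points of $t\mapsto f(\mu',1-f(\nu',1-t))$ into those of the transposed system). This is exactly where the fixed points $\alpha,\alpha'$ and the hypothesis $\max_{t\in[0,1]}\Lambda(t)=\max(\Lambda(\alpha),\Lambda(\alpha'))$ enter; no high-girth assumption is available here (local convergence to a tree only forces few short cycles per vertex), and if the atom at zero were automatically continuous the theorem would not need that hypothesis at all.

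A secondary gap: your ``mass-transport identity'' $\bar\mu\,|C_n|\approx\bar\nu\,|R_n|$ and the analysis from the row side are not consequences of the stated hypothesis, which only gives convergence of $(G_n,C_n)$ rooted at columns. Without uniform integrability of the column degrees, the average degree need not converge to $\bar\mu$, and nothing a priori controls the local statistics seen from $R_n$. The paper handles this with a regularization step (Lemma~\ref{LemmaReg}, built on the degree-biasing Lemmas~\ref{bias1}--\ref{munuswitch} and Lemma~\ref{degreg}): one passes to a slightly smaller matrix, at negligible rank cost, for which $(G_n,R_n)$ converges to a $\GW_*(\nu,\mu)$-tree and both average degrees converge, which is what both the leaf-removal count on the row side and the identity $|R_n|/|C_n|\to\bar\mu/\bar\nu$ require. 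Your write-up would need an analogous reduction before any of the row-side or edge-counting steps can be justified. Finally, note that the passage from deterministic sequences to convergence in probability is handled in the paper by a Skorokhod/coupling argument, which is routine but should be stated.
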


We prove this theorem in Section~\ref{SecRankThm}. In the special case of $\mu=\nu$, Theorem~\ref{RankThm} was basically proved by Bordenave, Lelarge and Salez~\cite{bordenave2011rank}. Our proof mainly follows their argument. For random matrices chosen uniformly at random from the set of matrices with prescribed numbers of non-zero entries in each row and column, similar results were proved in \cite{coja2023rank}. We stated Theorem~\ref{RankThm} in a greater generality than what is necessary for our purposes, hoping for further applications.

\subsection{The convergence of persistent Betti numbers}
In this section, we prove Theorem~\ref{thm1}.

Let $J_n$ be the matrix of the $k-1$th coboundary map $\mathcal{Y}_n$. Note that the columns of $J_n$ are indexed by ${[n]}\choose{k}$ and the rows are indexed by ${[n]}\choose{k+1}$. 

Let $K_n$ be the submatrix of $J_n$ determined by the rows indexed by the $k$-dimensional faces of $\mathcal{Y}_n(s)$. Moreover, let $M_n$ be the matrix obtained from $K_n$ by deleting the columns indexed by the $k-1$ dimensional faces of $\mathcal{Y}_n(r)$ and the rows indexed by the $k$-dimensional faces of~$\mathcal{Y}_n(r)$. 

Note that
\begin{multline}\label{dimZcB}\dim Z_{k-1}(\mathcal{K}(r))\cap B_{k-1}(\mathcal{K}(s))\\ =\dim \left\{w\in \RowS(K_n)\,:\, w(\sigma)=0\text{ for all }\sigma\in {{[n]}\choose{k}}\setminus \mathcal{Y}_n(r)\right\}.\end{multline}

Consider the projection $P$ that maps a vector $(w_\sigma)_{\sigma\in {{[n]}\choose{k}}}$ to $(w_\sigma)_{\sigma\in {{[n]}\choose{k}}\setminus \mathcal{Y}_n(r)}$. Let us restrict $P$ to $\RowS(K_n)$. Applying the rank nullity theorem to this map, we obtain that
\begin{multline*}\dim \RowS(K_n)\\=\dim \RowS(M_n)+\dim \left\{w\in \RowS(K_n)\,:\, w(\sigma)=0\text{ for all }\sigma\in {{[n]}\choose{k}}\setminus \mathcal{Y}_n(r)\right\},\end{multline*}
or, in other words,
\[\rank(K_n)=\rank(M_n)+\dim \left\{w\in \RowS(K_n)\,:\, w(\sigma)=0\text{ for all }\sigma\in {{[n]}\choose{k}}\setminus \mathcal{Y}_n(r)\right\}.\]

Combining this with \eqref{dimZcB}, we see that
\begin{equation}\label{dimasrankdiff}
    \dim Z_{k-1}(\mathcal{K}(r))\cap B_{k-1}(\mathcal{K}(s)) =\rank(K_n)-\rank(M_n).
\end{equation}

First, assume that $r<s$. 
Let $c=s-r$, and $q=e^{-r}$.

The next two lemmas are proved in Section~\ref{seclocallimitlemma} and Section~\ref{secfixedpoint}, respectively. 

\begin{lemma}\label{lemmaMnlimit}\hfill
\begin{enumerate}[(a)]
    \item We have \[\lim_{n\to\infty} \frac{\dim C_{k-1}(\mathcal{Y}_n(r))}{{{n}\choose{k}}}=1-q\quad\text{ in probability}.\]
    \item $M_n$ converges locally to a $\GW_*(\Pois(c),\Bd(q,k+1))$-tree. 
\end{enumerate}

\end{lemma}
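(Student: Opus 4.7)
The plan has two parts. Part (a) is a direct second moment computation, and part (b) is a level-by-level BFS analysis of the Tanner graph of $M_n$ identifying the local limit.

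For part (a), write $\dim C_{k-1}(\mathcal{Y}_n(r)) = \sum_{\tau \in \binom{[n]}{k}} I_\tau$ with $I_\tau = \mathbf{1}[\tau \in \mathcal{Y}_n(r)]$. Since $\tau \in \mathcal{Y}_n(r)$ iff some $(k+1)$-superset of $\tau$ has timestamp $\le r$, I would compute $\mathbb{E}[I_\tau] = 1 - (1 - r/n)^{n-k} \to 1 - \exp(-r) = 1 - q$, so the expectation over all $\tau$ is $\binom{n}{k}(1-q) + o(\binom{n}{k})$. For the variance, two distinct $k$-subsets $\tau_1, \tau_2$ share a $(k+1)$-superset only if $|\tau_1 \cap \tau_2| = k-1$, and then exactly one such superset, namely $\tau_1 \cup \tau_2$. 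A one-term inclusion-exclusion gives $\mathrm{Cov}(I_{\tau_1}, I_{\tau_2}) = O(1/n)$ for such pairs and $0$ otherwise, and since there are $O(n^{k+1})$ such pairs, the total variance is $O(n^k) = o(\binom{n}{k}^2)$. Chebyshev's inequality then finishes part (a).

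For part (b), first a direct computation from the definitions of $\mu',\nu'$ in Section~\ref{SecGW} gives $\nu' = \Bd(q, k)$ (from $\nu = \Bd(q, k+1)$) and $\mu' = \Pois(c) = \mu$, so the target $\GW_*(\Pois(c), \Bd(q, k+1))$-tree alternates between $\Pois(c)$-many row children at each column vertex (root included) and $\Bd(q, k)$-many column children at each row vertex. I would then explore the Tanner graph of $M_n$ from a uniformly random column vertex $\tau_0 \in \binom{[n]}{k} \setminus \mathcal{Y}_n(r)$ by BFS. Conditional on $\tau_0 \notin \mathcal{Y}_n(r)$, each of its $n-k$ supersets $\sigma$ has $t_\sigma > r$, and conditional on that each independently has $t_\sigma \le s$ with probability $(s-r)/(n-r) = c/n + O(n^{-2})$, so the number of row children of $\tau_0$ is $\Bd(n-k, (s-r)/(n-r)) \to \Pois(c)$. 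At a row vertex $\sigma$, the $k$ candidate column children $\tau' \subset \sigma$ other than the parent satisfy a crucial combinatorial fact: any two of them share only $\sigma$ as a common $(k+1)$-superset, so after fixing $t_\sigma$ the events $\{\tau' \notin \mathcal{Y}_n(r)\}$ are conditionally independent Bernoullis with probability $(1-r/n)^{n-k-1} \to q$, giving $\Bd(q, k)$ column children. At a deeper column vertex $\tau'$, its $n-k-1$ non-parent supersets have timestamps untouched by previous BFS steps, and conditional on $\tau' \notin \mathcal{Y}_n(r)$ each independently lies in $(r, s]$ with conditional probability $(s-r)/(n-r)$, yielding $\Pois(c) = \mu'$ new row children.

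The main obstacle will be formalizing the BFS coupling and, in particular, showing that the depth-$R$ ball around $\tau_0$ is a tree with high probability so the successive offspring distributions at different BFS sites are asymptotically independent. The absence of $4$-cycles in the Tanner graph follows directly from the one-shared-superset observation above. For longer cycles, each requires at least one extra $(k+1)$-subset to coincidentally lie in the timestamp window $(r, s]$, contributing a factor of $O(1/n)$ per such coincidence, so a union bound over the $O_R(1)$ possible cycle configurations shows the radius-$R$ ball is a tree with probability $1 - O(1/n)$. Combining the BFS analysis with a second moment estimate on the fraction of $\tau_0 \in \binom{[n]}{k} \setminus \mathcal{Y}_n(r)$ whose depth-$R$ ball is isomorphic to a given rooted graph $(H, q)$---using that generic pairs of column vertices have depth-$R$ balls sharing no $(k+1)$-subsets, hence being asymptotically independent---yields the claimed local convergence in probability.
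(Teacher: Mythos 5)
Your proposal is correct in outline, but it takes a genuinely different route from the paper. For part (a), you use a direct second-moment (Chebyshev) computation on the count of $k$-faces present at time $r$; the paper instead deduces (a) from the local weak convergence of the Tanner graph of $K_n$, reading off $\mathbb{P}(\deg_G(o)>0)=1-\exp(-s)$ at the root and then replacing $s$ by $r$. For part (b), you analyze the Tanner graph of $M_n$ directly by a BFS exploration, identifying the offspring laws $\Pois(c)$ at column vertices and $\Bd(q,k)$ at row vertices (your computation that $\nu'=\Bd(q,k)$ when $\nu=\Bd(q,k+1)$ and $\mu'=\mu=\Pois(c)$ matches the target tree), and you control cycles and dependencies by $O(1/n)$ coincidence bounds plus a second-moment argument over pairs of roots. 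The paper instead quotes an existing local-limit result (Theorem 13 of Kanazawa) for the time-$s$ Tanner graph of $K_n$, realizes the passage from $K_n$ to $M_n$ as independent $p=r/s$ thinning of the row set followed by deletion of $1$-neighborhoods, invokes its general decorated-graph Lemmas~\ref{localweak1}--\ref{localweak3}, and then identifies the limit by a Poisson-splitting computation carried out on the limiting $\GW_*(\Pois(s),k+1)$-tree rather than on the finite graphs. What your approach buys is self-containedness (no external local-limit citation, no thinning machinery), at the cost of having to formalize the exploration coupling carefully: besides treeness of the radius-$R$ ball, you must also rule out (or absorb into an $O(1/n)$ error event) the situation where two explored columns in different branches intersect in $k-1$ vertices and hence share a $(k+1)$-superset whose timestamp lies outside $(r,s]$ --- this creates a dependency invisible in the Tanner graph, though it only perturbs the offspring laws by $O(1/n)$ and so does not endanger the limit. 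The paper's route buys a shorter argument in which all the delicate exploration bookkeeping is hidden in the cited theorem and the general local-weak-convergence lemmas.
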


\begin{lemma}\label{lemmaFixedpoints}
    Given $0<q\le 1$ and $c> 0$, let $\alpha$ and $\alpha'$ be the smallest and largest solution of $t=\exp(-c(1-qt)^k)$ in the interval $[0,1]$. Then
    \[\lambda_{q,c}=\max(\Lambda_{q,c}(\alpha),\Lambda_{q,c}(\alpha')).\]
\end{lemma}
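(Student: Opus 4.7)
The plan is to analyze the sign pattern of $\Lambda_{q,c}'$ on $[0,1]$, use a convexity argument to bound the number of fixed points of $g(t):=\exp(-c(1-qt)^k)$ by three, and then conclude by a short case analysis. The candidate critical points of $\Lambda_{q,c}$ will turn out to be precisely the fixed points of $g$, so the task reduces to comparing $\Lambda_{q,c}$ at those fixed points.

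First I would compute the derivative directly. A short calculation yields the factorization
$$\Lambda_{q,c}'(t)\;=\;cqk(1-qt)^{k-1}\bigl(g(t)-t\bigr),$$
where the prefactor $cqk(1-qt)^{k-1}$ is non-negative on $[0,1]$. Hence on any subinterval of $[0,1]$ free of fixed points of $g$, the function $\Lambda_{q,c}$ is monotone. Since $\Lambda_{q,c}'(0)=cqk\,e^{-c}>0$, the point $t=0$ is not a maximizer. The point $t=1$ is either itself a fixed point (exactly when $q=1$, since then $g(1)=1$) or satisfies $\Lambda_{q,c}'(1)=cqk(1-q)^{k-1}\bigl(\exp(-c(1-q)^k)-1\bigr)\le 0$; in either case any potential maximum at $t=1$ is absorbed into the fixed point $\alpha'$.

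Next I would bound the number of fixed points. Differentiating $g$ twice gives
$$g''(t)\;=\;cq^2k\,g(t)\,(1-qt)^{k-2}\Bigl[\,ck(1-qt)^k-(k-1)\,\Bigr],$$
and on $(0,1/q)$ the bracket is strictly monotone in $t$, so $g''$ has at most one zero there. Two applications of Rolle's theorem (from $h''=g''$ to $h'$ to $h$) yield that $h(t):=g(t)-t$ has at most three zeros in $[0,1/q]$, and hence in $[0,1]$. A brief case analysis then closes the argument. If $g$ has a unique fixed point in $[0,1]$, then $\alpha=\alpha'$ and the claim is vacuous. If $g$ has exactly two fixed points $\alpha<\alpha'$, monotonicity of $\Lambda_{q,c}$ between them (together with the boundary behaviour at $0$ and $1$ above) puts the maximum at one of them. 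If $g$ has three fixed points $\alpha<\beta<\alpha'$, the same Rolle bookkeeping forces them all to be simple zeros of $h$; combined with $h(0)=e^{-c}>0$ this gives the alternating sign pattern $+,-,+,-$ for $h$ on $(0,\alpha),(\alpha,\beta),(\beta,\alpha'),(\alpha',1)$, so $\Lambda_{q,c}$ is in turn increasing, decreasing, increasing, decreasing on these intervals. Thus $\beta$ is a local minimum, and the global maximum is $\max\bigl(\Lambda_{q,c}(\alpha),\Lambda_{q,c}(\alpha')\bigr)$.

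The main obstacle I anticipate is the bookkeeping around degenerate (tangent) fixed points and the borderline case $q=1$, where the prefactor $(1-qt)^{k-1}$ vanishes at $t=1$. These are not genuine difficulties, because between two consecutive \emph{distinct} fixed points $\Lambda_{q,c}$ is still monotone, so a coalescence of fixed points only removes a local extremum from the list of candidates without affecting the identification of the global maximum with $\Lambda_{q,c}(\alpha)$ or $\Lambda_{q,c}(\alpha')$; but writing this out cleanly requires some care.
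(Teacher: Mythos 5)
Your argument is correct, and it reaches the conclusion by a somewhat different device than the paper. Both proofs start from the same factorization $\Lambda_{q,c}'(t)=ckq(1-qt)^{k-1}\bigl(\exp(-c(1-qt)^k)-t\bigr)$, reducing everything to the fixed points of $g(t)=\exp(-c(1-qt)^k)$; the difference is in how the fixed-point structure is controlled. The paper inverts the equation, setting $\Psi(t)=-\log(t)/(1-qt)^k$, shows $\Psi'$ has at most two zeros, and reads off the number and nature of the solutions of $c=\Psi(t)$ from the shape of $\Psi$, with a separate discussion for $q=1$ (where $t=1$ is always a fixed point). You instead bound the zeros of $h(t)=g(t)-t$ directly: since $g''(t)=ckq^2g(t)(1-qt)^{k-2}\bigl[ck(1-qt)^k-(k-1)\bigr]$ changes sign at most once on $(0,1/q)$, two applications of Rolle give at most three fixed points, and your sign bookkeeping ($h(0)=e^{-c}>0$, $h(1)\le 0$ with equality exactly when $q=1$) then shows the middle fixed point, if present, is a local minimum of $\Lambda_{q,c}$, so the maximum sits at $\alpha$ or $\alpha'$. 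This buys a unified treatment of $q<1$ and $q=1$ and a cleaner count, at the cost of the tangency bookkeeping you acknowledge; that bookkeeping does go through, since simplicity of all three zeros in the three-point case follows from the same Rolle count, and with at most two fixed points the monotonicity between consecutive fixed points plus $h(1)\le 0$ suffices. One small correction: the unique-fixed-point case is not vacuous — you still must check that the global maximum is $\Lambda_{q,c}(\alpha)$ — but it follows at once from the observations you already make: $h>0$ on $[0,\alpha)$ and $h\le 0$ on $(\alpha,1]$ (forced by $h(0)>0\ge h(1)$ and the absence of further zeros), so $\Lambda_{q,c}$ increases up to $\alpha$ and does not increase afterwards.
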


\begin{lemma}\label{asymprank1}
We have \[\lim_{n\to\infty}\frac{\rank(M_n)}{{{n}\choose{k}}}=q(1-\lambda_{q,c})\quad\text{ in probability.}\]
\end{lemma}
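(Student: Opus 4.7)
The plan is to apply Theorem~\ref{RankThm} directly to the matrix $M_n$, with the input distributions $\mu=\Pois(c)$ and $\nu=\Bd(q,k+1)$ provided by Lemma~\ref{lemmaMnlimit}(b). Since $M_n$ is a submatrix of the coboundary matrix $J_n$, all its entries lie in $\{-1,0,+1\}$, so the setup of Theorem~\ref{RankThm} is available. The local convergence hypothesis of Theorem~\ref{RankThm} is exactly what Lemma~\ref{lemmaMnlimit}(b) asserts.

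First I would identify the size-biased offspring distributions and check that the function $\Lambda$ appearing in Theorem~\ref{RankThm} coincides with $\Lambda_{q,c}$. A short calculation gives $\mu'=\Pois(c)=\mu$ (Poisson is fixed by size-biasing) and $\nu'=\Bd(q,k)$, with generating functions $f(\mu,1-x)=e^{-cx}$, $f(\nu,1-x)=(1-qx)^{k+1}$ and $f(\nu',1-x)=(1-qx)^k$; the means are $\bar\mu=c$, $\bar\nu=q(k+1)$. Substituting into the definition of $\Lambda$ from Section~\ref{SecRankThmStatement} yields
\[\Lambda(t)=\exp\bigl(-c(1-qt)^k\bigr)-\frac{c}{q(k+1)}\Bigl(1-(1-qt)^{k+1}-q(k+1)t(1-qt)^k\Bigr),\]
which is exactly $\Lambda_{q,c}(t)$. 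Likewise the fixed point equation $t=f(\mu',1-f(\nu',1-t))$ becomes $t=\exp(-c(1-qt)^k)$, so the roots $\alpha,\alpha'$ in Theorem~\ref{RankThm} are precisely those in Lemma~\ref{lemmaFixedpoints}. Lemma~\ref{lemmaFixedpoints} then gives $\max_{t\in[0,1]}\Lambda_{q,c}(t)=\max(\Lambda_{q,c}(\alpha),\Lambda_{q,c}(\alpha'))$, verifying the final hypothesis of Theorem~\ref{RankThm}.

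Theorem~\ref{RankThm} therefore yields
\[\frac{\rank(M_n)}{|C_n|}\longrightarrow 1-\lambda_{q,c} \quad\text{in probability},\]
where $C_n$ is the set of columns of $M_n$, namely the $(k-1)$-faces of $\mathcal{Y}_n$ not belonging to $\mathcal{Y}_n(r)$, so $|C_n|=\binom{n}{k}-\dim C_{k-1}(\mathcal{Y}_n(r))$. By Lemma~\ref{lemmaMnlimit}(a), $\dim C_{k-1}(\mathcal{Y}_n(r))/\binom{n}{k}\to 1-q$ in probability, hence $|C_n|/\binom{n}{k}\to q$ in probability. Multiplying the two limits gives the desired conclusion
\[\frac{\rank(M_n)}{\binom{n}{k}}=\frac{\rank(M_n)}{|C_n|}\cdot\frac{|C_n|}{\binom{n}{k}}\longrightarrow q(1-\lambda_{q,c})\quad\text{in probability}.\]

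The entire argument is essentially bookkeeping: the real content sits in Theorem~\ref{RankThm} and in the two preceding lemmas. The only step that requires any care is the generating function computation verifying $\Lambda=\Lambda_{q,c}$, which I would expect to be the single place where a slip in normalization (e.g.\ confusing $\nu$ with $\nu'$, or mis-weighting by $\bar\mu/\bar\nu$) could occur.
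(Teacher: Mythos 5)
Your proposal is correct and follows essentially the same route as the paper: apply Theorem~\ref{RankThm} using the local limit from Lemma~\ref{lemmaMnlimit}(b) and the maximum hypothesis supplied by Lemma~\ref{lemmaFixedpoints}, then rescale by $|C_n|/\binom{n}{k}\to q$ from Lemma~\ref{lemmaMnlimit}(a). Your explicit generating-function check that $\Lambda=\Lambda_{q,c}$ (with $\mu'=\Pois(c)$, $\nu'=\Bd(q,k)$) is the same computation the paper leaves implicit, and it is carried out correctly.
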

\begin{proof}
Recall that the columns of $M_n$ are indexed by $C_n={{[n]}\choose{k}}\setminus \mathcal{Y}_n(r)$. By part (a) of Lemma~\ref{lemmaMnlimit}, we have
\begin{equation}\label{eqtoq}\lim_{n\to\infty} \frac{|C_n|}{{{n}\choose{k}}}=1-\lim_{n\to\infty}\frac{\dim C_{k-1}(\mathcal{Y}_n(r))}{{{n}\choose{k}}}=q\quad\text{ in probability.}\end{equation}

Combining part (b) of Lemma~\ref{lemmaMnlimit} and Lemma~\ref{lemmaFixedpoints} with Theorem~\ref{RankThm}, we see that
\begin{equation}\label{eqar0}\lim_{n\to\infty}\frac{\rank(M_n)}{|C_n|}=1-\lambda_{q,c}\quad \text{ in probability.}\end{equation}

Putting together \eqref{eqtoq} and \eqref{eqar0}, the statement follows.
\end{proof}

Applying the above Lemma~\ref{asymprank1} with $0$ in place of $r$, we obtain the following lemma.

\begin{lemma}\label{asymprank2}
We have \[\lim_{n\to\infty}\frac{\rank(K_n)}{{{n}\choose{k}}}=1-\lambda_{1,s}\quad\text{ in probability.}\]
\end{lemma}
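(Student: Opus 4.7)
The plan is to derive this lemma as a direct specialization of Lemma~\ref{asymprank1} by setting $r = 0$. First I would observe that with $r = 0$ the parameters become $q = e^{-r} = 1$ and $c = s - r = s$, so that the conclusion of Lemma~\ref{asymprank1} reads $\lim_{n\to\infty}\rank(M_n)/\binom{n}{k} = 1\cdot(1-\lambda_{1,s}) = 1-\lambda_{1,s}$ in probability, which matches the stated target on the right-hand side.

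The only substantive step is to identify the matrix $M_n$ (as defined in the setup preceding Lemma~\ref{asymprank1}) with $K_n$ when $r = 0$. Recall that $M_n$ is obtained from $K_n$ by deleting the columns indexed by the $(k-1)$-dimensional faces of $\mathcal{Y}_n(r)$ and the rows indexed by the $k$-dimensional faces of $\mathcal{Y}_n(r)$. Since the values $\{\kappa_n(\sigma)\}_{\sigma\in\binom{[n]}{k+1}}$ are i.i.d.\ continuous (uniform on $[0,n]$), almost surely none of them equals $0$; consequently $\mathcal{Y}_n(0) = \emptyset$ a.s., and no rows or columns are deleted. Thus $M_n = K_n$ almost surely at $r = 0$, and the convergence statement for $\rank(M_n)$ transfers verbatim to one for $\rank(K_n)$.

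A small point of bookkeeping is that Lemma~\ref{asymprank1} was stated under the standing assumption $r < s$, so strictly speaking the derivation above requires $s > 0$. The degenerate case $s = 0$ is handled by direct inspection: then $K_n$ has no rows (again since $\mathcal{Y}_n(0) = \emptyset$ a.s.), so $\rank(K_n) = 0$, while a one-line evaluation of $\Lambda_{1,0}(t) = \exp(0) - 0 = 1$ gives $\lambda_{1,0} = 1$, so $1 - \lambda_{1,s} = 0$ and the identity is trivially satisfied. There is no genuine obstacle here; the entire content of the lemma is already packaged inside Lemma~\ref{asymprank1}, and the work is confined to recognizing the $r=0$ specialization.
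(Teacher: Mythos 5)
Your proposal is correct and is exactly the paper's argument: the paper obtains Lemma~\ref{asymprank2} by applying Lemma~\ref{asymprank1} with $0$ in place of $r$, which is the specialization $q=1$, $c=s$ that you carry out. Your additional checks — that $\mathcal{Y}_n(0)$ is almost surely empty so $M_n=K_n$ at $r=0$, and the trivial $s=0$ case with $\lambda_{1,0}=1$ — are sound bookkeeping that the paper leaves implicit.
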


Combining Lemma~\ref{asymprank1} and Lemma~\ref{asymprank2} with \eqref{dimasrankdiff}, we see that for $r<s$, we have
\begin{align*}\lim_{n\to\infty}& \frac{\dim Z_{k-1}(\mathcal{Y}_n(r))\cap B_{k-1}(\mathcal{Y}_n(s))}{{{n}\choose{k}}}\\&=1-\lambda_{1,s}-q(1-\lambda_{q,c})\\&=1-\exp(-r)-(\lambda_{1,s}-\exp(-r)\lambda_{\exp(-r),s-r})\quad\text{ in probability.}
\end{align*}

If $r\ge s$, then $\dim Z_{k-1}(\mathcal{Y}_n(r))\cap B_{k-1}(\mathcal{Y}_n(s))=\dim B_{k-1}(\mathcal{Y}_n(s))$. Thus, by Lemma~\ref{asymprank2}, we have
\[\lim_{n\to\infty} \frac{\dim Z_{k-1}(\mathcal{Y}_n(r))\cap B_{k-1}(\mathcal{Y}_n(s))}{{{n}\choose{k}}}=1-\lambda_{1,s}\quad\text{ in probability.}\]

Comparing these with the definition of $\hat{\beta}^{r,s}_{k-1}$, we see that for any $r,s\in[0,\infty)$, we have
\begin{equation}
    \label{Bettilimit01}
    \lim_{n\to\infty} \frac{\dim Z_{k-1}(\mathcal{Y}_n(r))\cap B_{k-1}(\mathcal{Y}_n(s))}{{{n}\choose{k}}}=1-\exp(-r)-\hat{\beta}_{k-1}^{r,s}\quad\text{ in probability.}
\end{equation}

By the rank-nullity theorem
\[\dim C_{k-1}(\mathcal{Y}_n(r))-\dim Z_{k-1}(\mathcal{Y}_n(r))=\dim B_{k-2}(\mathcal{Y}_n(r))\le {{n}\choose{k-1}}.\]

Thus, it follows from part (a) of Lemma~\ref{lemmaMnlimit} that
\begin{equation}\label{Zlimit}\lim_{n\to\infty} \frac{\dim Z_{k-1}(\mathcal{Y}_n(r))}{{{n}\choose{k}}}=1-\exp(-r)\quad\text{ in probability}.\end{equation}

Combining this with \eqref{Bettilimit01}, we have 
\[\lim_{n\to\infty} \frac{\beta_{k-1}^{r,s}(\mathcal{Y}_n,\kappa_n)}{{{n}\choose{k}}}=\hat{\beta}^{r,s}_{k-1}\quad\text{ in probability.}\]

Therefore, we proved Theorem~\ref{thm1}.

% \begin{lemma}\hfill
% \begin{enumerate}[(a)]
% \item $M_n$ converge locally to $\GW_*(\Pois(c),\Bd(k+1,q))$.
% \item $M_n^T$ converge locally to $\GW_*(\Bd(k+1,q),\Pois(c))$.
% \end{enumerate}
% \end{lemma}

% \begin{lemma}
% The maximum of $\Lambda_{q,c}$ on $[0,1]$ is attained at the smallest or at the largest root of $\Lambda_{q,c}'$ in $[0,1]$.
% \end{lemma}

% Clearly,
% \begin{align*}\lim_{n\to\infty}\frac{|C_n|}{{{n}\choose{k+1}}}=q \text{ in probability},\\
% \lim_{n\to\infty}\frac{|R_n|}{{{n}\choose{k+1}}}=?? \text{ in probability}.
% \end{align*}

% Thus, can be applied to get.

\subsection{The local weak limit of matrices coming from the Linial-Meshulam filtration -- The proof of Lemma~\ref{lemmaMnlimit}}\label{seclocallimitlemma}

Let $(G,o)$ be a $\GW_*(\Pois(s),k+1)$-tree. (Here $k+1$ stands for the Dirac measure on $k+1$.)

Let $G_n$ be the Tanner graph of $K_n$. The two color classes of $G_n$ are $C_n={{[n]}\choose {k}}$ and $R_n=\mathcal{Y}_n(s)\cap {{[n]}\choose {k+1}}$.

By \cite[Theorem 13]{kanazawa2022law}, we see that $(G_n,C_n)$ converges in probability to $(G,o)$. 

Note that
\[\frac{\dim C_{k-1}(\mathcal{Y}_n(s))}{|C_n|}=\frac{|\{c_n\in C_n\,:\,\deg_{G_n}(c_n)>0\}|}{|C_n|}.\]

Thus,
\[\lim_{n\to\infty}\frac{\dim C_{k-1}(\mathcal{Y}_n(s))}{|C_n|}=\mathbb{P}(\deg_G(o)>0)=1-\exp(-s)\text{ in probability.}\]

Replacing $s$ with $r$, part (a) of Lemma~\ref{lemmaMnlimit} follows.

Let $R$ be the set of vertices of $G$, which are at odd distance from $o$. It follows easily that the random set-rooted decorated graphs $((G_n,R_n),C_n)$ converge in probability to $((G,R),o)$. 

Let $p=\frac{r}s$. Let $R_n'$ be the subset of $R_n$, where each element of $R_n$ is kept with probability~$p$, independently. Similarly, let $R'$ be the subset of $R$, where each element of $R$ is kept with probability $p$, independently.

Then, let
\[S_n=\cup_{v\in R_n'} V(B_1(G_n,v))\quad\text{ and }\quad S=\cup_{v\in R'} V(B_1(G,v)).\]

Let $G_n^*$ be the graph obtained from $G_n$ by deleting all the vertices in $S_n$, and let $C_n^*=C_n\setminus S_n$. On the event $o\notin S$, let  $G^-$ be the graph obtained from $G$ by deleting all the vertices in $S$ and only keeping the connected component of the root.  Let $(G',o')$ have the distribution of $(G^-,o)$ conditioned on the event that $o\notin S$.
Combining Lemma~\ref{localweak1}, Lemma~\ref{localweak2} and Lemma~\ref{localweak3}, we see that the random set-rooted graphs $(G_n^*,C_n^*)$ converge to $(G',o')$ in probability.

Let $G_n'$ be the Tanner graph of $M_n$ and let $C_n'={{[n]}\choose 
{k}}\setminus \mathcal{Y}_n(r)$. It is straightforward to see that $(G_n^*,C_n^*)$ has the same distribution as $(G_n',C_n')$. 

Therefore, it is enough to prove that $(G',o')$ is a $\GW_*(\Pois(c),\Bd(q,k+1))$-tree. 

Let $\ell$ be even. Conditioned on $B_\ell(G,o)$ and $R'\cap V(B_\ell(G,o))$, for any vertex $v$ at distance~$\ell$ from $o$, we have that 
\begin{itemize}
\item the number of children of $v$ which are in $R'$ is given by a $\Pois(r)$ random variable $A_v$;
\item  the number of children of $v$ which are not in $R'$ is given by a $\Pois(s-r)$ random variable~$B_v$;
\item The random variables $A_v,B_v$ are independent for all choices of vertices $v$ at distance $\ell$ from $o$.
\end{itemize}

Therefore, $\mathbb{P}(A_v=0)=\exp(-r)=q$. Moreover, conditioned on the event that $A_v=0$, the law of $B_v$ is $\Pois(s-r)$. Let $w$ be the parent of $v$. It follows that $\mathbb{P}(v\notin S\,|\, w\notin R')=q$, and conditioned on the $v\notin S$, the number of children of $v$ is given by a $\Pois(s-r)$ random variable.

A similar argument gives that conditioned on $B_{\ell-1}(G,o)$ and $R'\cap V(B_{\ell-1}(G,o))$, if $u$ is at distance $\ell-1$ from $o$ such that $u\notin R'$, the number of children of $u$ which are not in $S$ is given by a $\Bd(k,q)$ random variable. Moreover, these random variables are independent for all choices of $u$.

Part (b) of Lemma~\ref{lemmaMnlimit} follows from these observations.

\subsection{The fixed points of the function $\exp(-c(1-qt)^k)$ -- The proof of Lemma~\ref{lemmaFixedpoints}}\label{secfixedpoint}

First, let us fix $q\in (0,1)$. Note that
\begin{equation}\label{Lambdadiff}\Lambda_{q,c}'(t)=ckq(1-qt)^{k-1}(\exp(-c(1-qt)^k)-t).\end{equation}
Thus,  $\Lambda_{q,c}'(t)$ has the same sign as $\phi_{c}(t)=\exp(-c(1-qt)^k)-t$. 

%Note that
% \[\phi_{c}(0)=\exp(-c)>0 \text{ and }\phi(c)(1)=\exp(-c(1-q)^k)-1<0,\]
% which implies that the maximum of $\Lambda_{q,c}$ on $[0,1]$ must be attained at a $t$ satisfying $\phi_{c}(t)=0$.

% Let 
% \[S=\{(t,c)\in [0,1]\times (0,\infty)\,:\,\phi_{q}(t)=0\}.\]

For $t\in (0,1]$, let \[\Psi(t)=\frac{-\log(t)}{(1-qt)^k},\]
we also set $\Psi(0)=+\infty=\lim_{t\to 0_+} \Psi(t)$.

It is straightforward to see that for $t\in [0,1]$, we have
\begin{enumerate}[\indent (a)]
\item We have $\phi_c(t)=0$, if and only if $c=\Psi(t)$.
\item We have $\phi_c(t)<0$, if and only if $c>\Psi(t)$.
\item We have $\phi_c(t)>0$, if and only if $c<\Psi(t)$.
\end{enumerate}

We have
\[\Psi'(t)=\frac{qt-1-kqt\log(t)}{t(1-qt)^{k+1}}.\]

Note that
\[\frac{d}{dt}\left(qt-1-kqt\log(t)\right)=q-kq-kq\log(t)\]
 is a monotonically decreasing function. 
Thus, $\Psi'(t)$ can have at most two zeros in $[0,1]$. Also note that $\Psi$ is nonnegative, $\Psi(0)=\infty$ and $\Psi(1)=0$. It follows from these observations that either
\begin{enumerate}[\indent (a)]
    \item The function $\Psi(t)$ monotonically decreasing on $[0,1]$; or
    \item We have $0<s_1<s_2<1$ such that  $\Psi(t)$ monotonically decreasing on $[0,s_1]$, increasing on $[s_1,s_2]$, decreasing on $[s_2,1]$.
\end{enumerate}
In case (a), for each $c$, $\phi_c(t)=0$ has a unique solution $t_c$ in $[0,1]$, and $\lambda_{q,c}=\Lambda_{q,c}(t_c)$.

Case (b) is slightly more complicated:
\begin{itemize}
    \item If $c>\Psi(s_2)$, then $\phi_c(t)=0$ has a unique solution $t_c$ in $[0,1]$, and $\lambda_{q,c}=\Lambda_{q,c}(t_c)$.
    \item If $c=\Psi(s_2)$, then $\phi_c(t)=0$ has two solutions $0<t_c<t_c'<1$ in $[0,1]$. Moreover, $t_c'=s_2$, and $\lambda_{q,c}=\Lambda_{q,c}(t_c)$.

    \item If $\Psi(s_1)<c<\Psi(s_2)$, then $\phi_c(t)=0$ has three solutions $0<t_c<t_c'<t_c''<1$ in~$[0,1]$. Moreover, $\Lambda_{q,c}$ has local maximums at $t_c$ and $t_c''$, and a local minimum at $t_c'$. Thus,
    $\lambda_{q,c}=\max (\Lambda_{q,c}(t_c),\Lambda_{q,c}(t_c''))$.
    \item If $c=\Psi(s_1)$, then $\phi_c(t)=0$ has two solutions $0<t_c<t_c'<1$ in $[0,1]$. Moreover, $t_c=s_1$, and $\lambda_{q,c}=\Lambda_{q,c}(t_c')$.
    
    \item If $c<\Psi(s_1)$, then $\phi_c(t)=0$ has a unique solution $t_c$ in $[0,1]$, and $\lambda_{q,c}=\Lambda_{q,c}(t_c)$.
\end{itemize}

This finishes the proof of Lemma~\ref{lemmaFixedpoints} in the case of $q\in (0,1)$. 

Thus, only the case $q=1$ remains. By \eqref{Lambdadiff}, we see that $\Lambda_{1,c}'(1)=0$, and  $\Lambda_{1,c}'(t)$ has the same sign as $\phi_{c}(t)=\exp(-c(1-t)^k)-t$. For $t\in (0,1)$, let us let \[\Psi(t)=\frac{-\log(t)}{(1-t)^k},\]
we also set $\Psi(0)=+\infty=\lim_{t\to 0_+} \Psi(t)$.

It is straightforward to see that for $t\in [0,1)$, we have
\begin{enumerate}[\indent (a)]
\item We have $\phi_c(t)=0$, if and only if $c=\Psi(t)$.
\item We have $\phi_c(t)<0$, if and only if $c>\Psi(t)$.
\item We have $\phi_c(t)>0$, if and only if $c<\Psi(t)$.
\end{enumerate}
Also, $\phi_c(1)=0$ for all $c$.

We have
\[\Psi'(t)=\frac{t-1-kt\log(t)}{t(1-t)^{k+1}}.\]

Note that
\[\frac{d}{dt}\left(t-1-kt\log(t)\right)=1-k-k\log(t)\]
 is a monotonically decreasing function. 
Thus, $\Psi'(t)$ can have at most two zeros in $[0,1]$. Also note that $\lim_{t\to 0_+} \Psi(t)=\infty$ and $\lim_{t\to 1_-}\Psi(t)=\infty$. From these observations, we see that there is an $s\in (0,1)$ such that $\Psi$ is monotonically decreasing on $[0,s]$ and monotonically increasing on $[s,1)$. Note that $\Psi(s)>0$.

Thus,\begin{itemize}
    \item If $c<\Psi(s)$, then $\phi_c(t)=0$ has only one solution $t=1$. Moreover, $\Lambda_{1,c}$ is monotonically increasing in $[0,1]$, so $\lambda_{1,c}=\Lambda_{1,c}(1)$.
    \item If $c=\Psi(s)$, then $\phi_c(t)=0$ has two solutions $t=s$ and $t=1$. Moreover, $\Lambda_{1,c}$ is monotonically increasing in $[0,1]$, so $\lambda_{1,c}=\Lambda_{1,c}(1)$.

    \item If $c>\Psi(s)$, then $\phi_c(t)=0$ has three solutions $0<t_c<t_c'<t_c''=1$ in $[0,1]$. Moreover, $\Lambda_{1,c}$ has local maximums at $t_c$ and $t_c''=1$, and a local minimum at $t_c'$. Therefore, we have 
    $\lambda_{1,c}=\max (\Lambda_{1,c}(t_c),\Lambda_{1,c}(1))$.
    
\end{itemize}

Thus, Lemma~\ref{lemmaFixedpoints} follows.

\section{Convergence of persistence diagrams}

\subsection{Convergence of the verbose persistence diagrams}

In this section, we prove part (a) of Theorem~\ref{diagramconv}.
\begin{lemma}\label{CDF}
    Let $f:[0,\infty)^2\to [0,1]$ be a continuous function.

    \begin{enumerate}[(a)]
        \item Let $\eta_1,\eta_2,\dots$ be a tight sequence of probability measures on $\overline{\Delta}$ such that for all $r,s\ge 0$, we have
        \[\lim_{n\to\infty} \eta_n(([0,r]\times [0,s])\cap \overline{\Delta})=f(r,s).\]
        Then there is a unique probability measure $\eta$ on $\overline{\Delta}$ such that
        \[\eta(([0,r]\times [0,s])\cap \overline{\Delta})=f(r,s).\]
        \item Let $\eta_1',\eta_2',\dots$ be a sequence of probability measures on $\overline{\Delta}$ such that for all rational $r,s\ge 0$, we have
        \[\lim_{n\to\infty} \eta_n'(([0,r]\times [0,s])\cap \overline{\Delta})=f(r,s)=\eta(([0,r]\times [0,s])\cap \overline{\Delta}).\]
        Then $\eta_n'$ converges weakly to $\eta$.

        \item Let $(r_i,s_i)_{i=1}^\infty$ be an enumeration of the countable set $\{(r,s)\,:\,r,s\ge 0,\,r,s\in \mathbb{Q}\}$. Given a probability measure $\theta$ on $\overline{\Delta}$, we define
        \[\varrho(\theta)=\sum_{i=1}^{\infty} 2^{-i}\left|\theta(([0,r_i]\times [0,s_i])\cap \overline{\Delta})-f(r_i,s_i)\right|.\]

Let $\eta_1',\eta_2',\dots$ be a sequence of probability measures on $\overline{\Delta}$ such that $\lim_{n\to\infty} \varrho(\eta_n')=0$. 
        Then $\eta_n'$ converges weakly to $\eta$.
        \item Let $\mathcal{U}$ be an open subset of the probability measures on $\overline{\Delta}$ with respect to the weak topology such that $\eta\in\mathcal{U}$. Then, there is an $\varepsilon>0$ such that for all probability measures $\theta$ on $\overline{\Delta}$ satisfying $\varrho(\theta)<\varepsilon$, we have $\theta\in \mathcal{U}$.
    \end{enumerate}
\end{lemma}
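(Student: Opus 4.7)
The plan is to handle the four parts in order, each building on its predecessors. The common theme is that the values $\theta(([0,r]\times[0,s])\cap\overline{\Delta})$ with rational $(r,s)$ both uniquely determine a probability measure on $\overline{\Delta}$ and control the weak topology around any such measure. Prokhorov's theorem and the Portmanteau theorem will be the main tools.

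For part (a), tightness yields a weakly convergent subsequence $\eta_{n_j}\Rightarrow\eta$. For each $(r,s)$ at which $\eta$ assigns no mass to $\partial([0,r]\times[0,s])\cap\overline{\Delta}$, the Portmanteau theorem gives $\eta(([0,r]\times[0,s])\cap\overline{\Delta})=\lim_j\eta_{n_j}(([0,r]\times[0,s])\cap\overline{\Delta})=f(r,s)$. The set of $r$ (respectively $s$) for which this fails is at most countable, since $\eta$ has finite mass, so such ``good'' $(r,s)$ are dense in $[0,\infty)^2$. For arbitrary $(r,s)$ I would pick good sequences $r_k\downarrow r$, $s_k\downarrow s$ and use continuity of $f$ together with continuity from above of the measure $\eta$ to extend the identity everywhere. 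Uniqueness of $\eta$ follows from the $\pi$-$\lambda$ theorem, since the rectangles $([0,r]\times[0,s])\cap\overline{\Delta}$ form a $\pi$-system generating the Borel $\sigma$-algebra of $\overline{\Delta}$.

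For part (b), the crux is to establish tightness of the sequence $(\eta_n')$. Since $\eta$ is a probability measure, $f(r,r)\to 1$ as $r\to\infty$ through the rationals; given $\varepsilon>0$, pick a rational $r_0$ with $f(r_0,r_0)>1-\varepsilon/2$, so that for all sufficiently large $n$ one has $\eta_n'(([0,r_0]\times[0,r_0])\cap\overline{\Delta})>1-\varepsilon$, and combined with the automatic tightness of the finitely many exceptional measures this yields tightness of the full sequence. Any weak subsequential limit must then have the same values as $\eta$ at every rational $(r,s)$, hence at every $(r,s)$ by the density/continuity argument from (a), so by uniqueness in (a) it equals $\eta$; this gives $\eta_n'\Rightarrow\eta$. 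Part (c) is then immediate: since each summand in the definition of $\varrho$ is nonnegative and bounded by $2^{-i}$, $\varrho(\eta_n')\to 0$ forces $\eta_n'(([0,r_i]\times[0,s_i])\cap\overline{\Delta})\to f(r_i,s_i)$ for every $i$, and part (b) applies.

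For part (d), I would argue by contradiction: if no such $\varepsilon$ exists, then for each integer $m\ge 1$ there is a probability measure $\theta_m$ on $\overline{\Delta}$ with $\varrho(\theta_m)<1/m$ but $\theta_m\notin\mathcal{U}$. By part (c), $\theta_m\Rightarrow\eta$, which forces $\theta_m\in\mathcal{U}$ for all sufficiently large $m$, a contradiction. The only step with substantive content is the tightness argument in part (b), which relies on the fact that $\eta$ is a genuine probability measure so that $f$ has limiting mass one at infinity; after that, everything is a standard consequence of Prokhorov and Portmanteau together with the $\pi$-$\lambda$ theorem.
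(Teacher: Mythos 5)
Your proposal is correct, but in part (b) it takes a genuinely different route from the paper. The paper handles (b) without any tightness argument: it passes from the sets $([0,r]\times[0,s])\cap\overline{\Delta}$ with rational corners to the $\pi$-system of rational half-open rectangles $(I_1\times I_2)\cap\overline{\Delta}$ by inclusion–exclusion and then invokes the convergence-determining-class theorem (Billingsley, Theorem 2.3), so weak convergence follows directly from convergence on this class. You instead extract tightness of $(\eta_n')$ from the hypothesis itself — using that $f(r,r)\to 1$ along rationals because $\eta$ is a probability measure, together with the individual tightness of the finitely many initial measures — and then run Prokhorov plus identification of subsequential limits plus the uniqueness from (a). Both work: the paper's version buys brevity at the price of a citation, yours is self-contained but needs the extra tightness step. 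One point you should make explicit in (b): to pin down a subsequential limit $\tilde\eta$ at rational corners, use the closed-set/open-set inequalities of the Portmanteau theorem (lower bound from the closed rectangle, upper bound from a slightly larger open rational rectangle together with continuity of $f$), not the continuity-set version you used in (a) — a priori $\tilde\eta$ could charge every rational vertical or horizontal line, so there may be no rational continuity points; with the closed/open form, your extension to all $(r,s)$ by monotone approximation from above and continuity of $f$ goes through, and the subsequence principle finishes the argument. Your (a) matches the paper in substance (the paper obtains uniqueness via (b), you via Dynkin's $\pi$–$\lambda$ theorem, both fine), and your (c) and (d) are exactly the standard deductions the paper leaves to the reader.
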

\begin{proof} First, we prove part (a). 
Since $\eta_1,\eta_2,\dots$ is a tight sequence, by Prokhorov's theorem, we can find a sequence $0<n_1<n_2<\cdots$ such that as $i$ tends to infinity, $\eta_{n_i}$ converges weakly to some probability measure $\eta$ on $\overline{\Delta}$. Then
\[\eta(([0,r]\times [0,s])\cap \overline{\Delta})\ge\lim_{i\to\infty} \eta_{n_i}(([0,r]\times [0,s])\cap \overline{\Delta})=f(r,s).\]

Let $\varepsilon>0$. Since $f$ is continuous, we can find  $r'>r$ and $s'>s$ such that $f(s',r')
\le f(s,r)+\varepsilon$.  

Then
\begin{align*}\eta(([0,r]\times [0,s])\cap \overline{\Delta})&\le \eta(([0,r')\times [0,s'))\cap \overline{\Delta})\\&\le \liminf_{i\to\infty} \eta_{n_i}(([0,r')\times [0,s'))\cap \overline{\Delta})\\&\le \liminf_{i\to\infty} \eta_{n_i}(([0,r']\times [0,s'])\cap \overline{\Delta}) \le f(r,s)+\varepsilon.\end{align*}

Tending to $0$ with $\varepsilon$, it follows that $\eta(([0,r]\times [0,s])\cap \overline{\Delta})=f(r,s)$.

This proves the existence part of (a). The uniqueness will follow from part (b), which we prove next. 

Let 
\begin{align*}
\mathcal{I}&=\{[0,r]\,:\,r\ge 0, r\in \mathbb{Q}\}\cup \{(r_1,r_2]\,:\,0\le r_1<r_2, \,r_1,r_2\in\mathbb{Q}\},\text{ and}\\
\mathcal{A}_P&=\{(I_1\times I_2)\cap \overline{\Delta}\,:\,I_1,I_2\in \mathcal{I}\}.
\end{align*}
It is straightforward to see that
\[\lim_{n\to\infty} \eta_n'(A)=\eta(A)\]
for all $A\in \mathcal{A}_P$. Thus, the statement follows from \cite[Theorem 2.3]{billingsley2013convergence}.

The last two statements clearly follow from the previous ones.
\end{proof}

 Let 
\[\overline{\Delta}(u)=\{(r,s)\in \overline{\Delta}\,:\,s>u\}.\]

\begin{lemma}\label{tightnesslemma}
    For any $h>0$, there is a $C$ such that for all $n$ and $u>0$, we have
    \[\mathbb{E}\xi_{\Ver,k-1}(\mathcal{Y}_n,\kappa_n)(\overline{\Delta}(u))\le C u^{-h}.\]
\end{lemma}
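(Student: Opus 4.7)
My plan begins by realizing the tail mass as a rank deficit. Using Lemma~\ref{lemmagoodbasis} together with the vanishing $H_{k-1}(\mathcal{Y}_n)=0$ (the complete $k$-skeleton on $n$ vertices is $(k-1)$-acyclic), we have
\[
\xi_{\Ver,k-1}(\mathcal{Y}_n,\kappa_n)(\overline{\Delta}(u))=\frac{D_u}{\dim Z_{k-1}(\mathcal{Y}_n)},\qquad D_u:=\dim Z_{k-1}(\mathcal{Y}_n)-\dim B_{k-1}(\mathcal{Y}_n(u)).
\]
A deterministic rank-nullity estimate gives $\dim Z_{k-1}(\mathcal{Y}_n)\ge\binom{n}{k}-\binom{n}{k-1}\ge c_k\binom{n}{k}$ for $n\ge 2k$ (the finitely many $n<2k$ can be absorbed into the constant $C$), and $D_u\equiv 0$ for $u\ge n$. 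So it suffices to show that $\mathbb{E} D_u/\binom{n}{k}$ decays faster than any polynomial in $u^{-1}$, uniformly in $n$.

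Write $D_u=D_u^{(1)}+\dim H_{k-1}(\mathcal{Y}_n(u))$, with $D_u^{(1)}:=\dim Z_{k-1}(\mathcal{Y}_n)-\dim Z_{k-1}(\mathcal{Y}_n(u))$ counting cycles born after time $u$. Projecting $Z_{k-1}(\mathcal{Y}_n)$ onto $C_{k-1}(\mathcal{Y}_n)/C_{k-1}(\mathcal{Y}_n(u))$ via rank-nullity bounds $D_u^{(1)}$ by the number of $(k-1)$-faces $\tau$ with $\kappa_n(\tau)>u$. Such a face requires $\kappa_n(\sigma)>u$ for each of the $n-k$ parent $k$-simplices $\sigma\supset\tau$, so $\mathbb{E} D_u^{(1)}\le\binom{n}{k}(1-u/n)^{n-k}\le\binom{n}{k}e^{-u/2}$ for $n\ge 2k$; this part already has super-polynomial decay in $u$.

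For the Betti contribution I would compare with the standard Linial--Meshulam complex $\overline{\mathcal{Y}_n}(u)$ (obtained from $\mathcal{Y}_n(u)$ by adjoining the missing lower-dimensional faces). The long exact sequence of the pair $(\overline{\mathcal{Y}_n}(u),\mathcal{Y}_n(u))$ has vanishing $H_k$ since the two complexes share the same $k$-simplices and the relative chain complex is therefore concentrated in degrees below $k$, whence the natural map $H_{k-1}(\mathcal{Y}_n(u))\to H_{k-1}(\overline{\mathcal{Y}_n}(u))$ is injective. Combining with the identity $\dim H_{k-1}(\overline{\mathcal{Y}_n}(u))=\binom{n-1}{k}-\rank(K_n^{(u)})$, where $K_n^{(u)}$ is the boundary matrix from Section~\ref{SecRankThmStatement}, Theorem~\ref{RankThm} (applied as in Lemma~\ref{asymprank2}) yields the pointwise-in-$u$ convergence $\mathbb{E}\dim H_{k-1}(\overline{\mathcal{Y}_n}(u))/\binom{n}{k}\to\lambda_{1,u}$, and the fixed-point calculations of Section~\ref{secfixedpoint} show that the smallest solution of $t=\exp(-u(1-t)^k)$ behaves like $e^{-u}$ for large $u$, yielding $\lambda_{1,u}\le A(1+u)e^{-u}$.

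The main obstacle is promoting this pointwise-in-$u$ convergence to a finite-$n$ expectation bound uniform in $n$. My plan here is to combine three ingredients: McDiarmid-type concentration for $\rank(K_n^{(u)})/\binom{n}{k}$, exploiting that altering a single $\kappa_n(\sigma)$ changes the rank by at most one; Lipschitz control in $u$ for the expected rank with constant proportional to the arrival rate $\binom{n}{k+1}/n$ of $k$-simplices; and the boundary condition $D_u\equiv 0$ for $u\ge n$. Interpolating between these should transfer the exponential decay of the limit $\lambda_{1,u}$ to all finite $n$ up to a uniformly small error. Making this quantification of the local weak convergence precise is the technical heart of the proof.
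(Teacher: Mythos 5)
Your reduction is sound and essentially matches the paper's first step: the tail mass equals $1-\dim B_{k-1}(\mathcal{Y}_n(u))/\binom{n-1}{k}$, and since $B_{k-1}(\mathcal{Y}_n(u))=B_{k-1}(\overline{\mathcal{Y}_n}(u))$ (the $k$-faces agree) this is exactly $\dim H_{k-1}(\overline{\mathcal{Y}_n}(u))/\binom{n-1}{k}$; your split into ``cycles born after $u$'' plus $\dim H_{k-1}(\mathcal{Y}_n(u))$ is a harmless detour around this identity, and the injectivity of $H_{k-1}(\mathcal{Y}_n(u))\to H_{k-1}(\overline{\mathcal{Y}_n}(u))$ is correct. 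The difference is in how the resulting Betti number is bounded: the paper simply invokes a known quantitative estimate (\cite[Example 3.7]{hino2019asymptotic}) giving $\mathbb{E}\dim H_{k-1}(\overline{\mathcal{Y}_n}(u))\le C u^{-h}\binom{n-1}{k}$ uniformly in $n$ and $u$, which is precisely the content of the lemma.

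This is where your proposal has a genuine gap. Theorem~\ref{RankThm} (via Lemma~\ref{asymprank2}) only yields, for each \emph{fixed} $u$, convergence in probability of $\rank(K_n)/\binom{n}{k}$ as $n\to\infty$, with no rate and no uniformity in $u$; the lemma, however, demands a bound valid for every finite $n$ and every $u>0$, and the dangerous regime is exactly $u$ growing with $n$ (say $u\asymp\log n$ or $u\asymp\sqrt{n}$), where the local weak limit description no longer applies. The three ingredients you propose do not close this: McDiarmid-type concentration controls the deviation of $\rank(K_n^{(u)})$ from its own finite-$n$ mean, not the distance of that mean from $\lambda_{1,u}$; the Lipschitz-in-$u$ control of the normalized expected rank has constant of order $1$ per unit time, far too weak to interpolate between the $u=O(1)$ asymptotics and the trivial boundary $u\ge n$; and local weak convergence by itself carries no quantitative error term to feed such an interpolation. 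You acknowledge this yourself by deferring the ``technical heart'' to an unproven quantification step, so as written the proof is incomplete at precisely the point the lemma requires: a finite-$n$, uniform-in-$u$ first-moment bound on $\dim H_{k-1}(\overline{\mathcal{Y}_n}(u))$. To repair it you would either cite such a bound (as the paper does) or prove one directly by a combinatorial/first-moment argument valid for all $n$ and $u$, rather than by passing through the $n\to\infty$ rank asymptotics.
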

\begin{proof}
    Let $\overline{\mathcal{Y}_n}(t)$ be obtained from  ${\mathcal{Y}_n}(t)$ by adding all the subsets of $[n]$ of size at most $k$. That is, $\overline{\mathcal{Y}_n}(t)$ is just the usual Linial-Meshulam complex. By~\eqref{measurevsBetti0}, we have
\[\xi_{\Ver,k-1}(\mathcal{Y}_n,\kappa_n)(\overline{\Delta}(u))=1-\frac{\dim B_{k-1}({\mathcal{Y}_n}(u))}{{{n-1}\choose{k}}}=1-\frac{\dim B_{k-1}(\overline{\mathcal{Y}_n}(u))}{{{n-1}\choose{k}}}=\frac{\dim H_{k-1}(\overline{\mathcal{Y}_n}(u))}{{{n-1}\choose{k}}}.\]
Combining this with \cite[Example 3.7]{hino2019asymptotic}, the statement follows.
\end{proof}

By Lemma~\ref{tightnesslemma}, we see that the sequence \[\eta_n=\mathbb{E}\xi_{\Ver,k-1}(\mathcal{Y}_n,\kappa_n)\] is tight.  

Let 
\[f(r,s)=1-\exp(-r)-\hat{\beta}_{k-1}^{r,s}.\]

\begin{lemma}
The function $f(r,s)$ is continuous. 
\end{lemma}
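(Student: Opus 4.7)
The plan is to reduce the continuity of $f$ to the joint continuity of the map $(q,c) \mapsto \lambda_{q,c}$, then handle the two pieces of the definition of $\hat{\beta}_{k-1}^{r,s}$ (separated by the diagonal $r = s$) in turn, and finally verify that the two pieces agree continuously across the diagonal itself.

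First I would check that $\Lambda_{q,c}(t)$, as given by its explicit formula
\[\Lambda_{q,c}(t) = \exp\bigl(-c(1-qt)^k\bigr) - \frac{c}{q(k+1)}\bigl(1 - (1-qt)^{k+1} - q(k+1)t(1-qt)^k\bigr),\]
is jointly continuous in $(q,c,t)$ on $(0,\infty) \times [0,\infty) \times [0,1]$; the factor $1/q$ is harmless since we only need $q \in (0,1]$. Compactness of $[0,1]$ together with joint continuity of $\Lambda_{q,c}(t)$ then yields, by a standard $\varepsilon$-$\delta$ argument (or by Berge's maximum theorem), that the map $(q,c) \mapsto \lambda_{q,c} = \max_{t \in [0,1]} \Lambda_{q,c}(t)$ is continuous on $(0,\infty) \times [0,\infty)$.

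On the closed region $\{r \ge s\}$, $f(r,s) = 1 - \lambda_{1,s}$ is immediately continuous. On the open region $\{r < s\}$, the identity $f(r,s) = 1 - e^{-r} - \lambda_{1,s} + e^{-r}\lambda_{e^{-r},\,s-r}$ exhibits $f$ as a continuous composition (note $e^{-r} \in (0,1]$ and $s-r \ge 0$, so $\lambda_{e^{-r},\,s-r}$ falls inside the domain where the previous step gives continuity).

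The only remaining point is continuity along the diagonal $r = s$. Here I would use the direct computation $\Lambda_{q,0}(t) \equiv 1$, so $\lambda_{q,0} = 1$ for every $q > 0$. For any $s_0 \ge 0$, the limit from the $r < s$ side is therefore
\[\lim_{\substack{(r,s)\to(s_0,s_0)\\ r<s}} f(r,s) = 1 - e^{-s_0} - \lambda_{1,s_0} + e^{-s_0}\cdot\lambda_{e^{-s_0},\,0} = 1 - \lambda_{1,s_0},\]
which matches $f(s_0, s_0) = 1 - \lambda_{1,s_0}$ given by the $r \ge s$ formula. The whole argument is routine; the only nontrivial ingredient is the joint continuity of $\lambda_{q,c}$, and even that is a standard compactness fact about the maximum of a jointly continuous function over a compact parameter set.
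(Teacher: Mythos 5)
Your proposal is correct and follows essentially the same route as the paper: the crux in both is the joint continuity of $(q,c)\mapsto\lambda_{q,c}$, from which continuity of $f$ on the two regions $\{r<s\}$ and $\{r\ge s\}$ and their agreement on the diagonal follow. The only differences are cosmetic: the paper obtains continuity of $\lambda_{q,c}$ via uniform bounds on $\partial_q\Lambda_{q,c}(t)$ and $\partial_c\Lambda_{q,c}(t)$ (a local Lipschitz estimate) rather than your compactness/Berge argument, and it leaves the diagonal matching implicit (``follows easily''), which you correctly make explicit using $\lambda_{q,0}=1$.
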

\begin{proof}
Let $o<\varepsilon<1$ and $C<\infty$. Then a straightforward calculation shows that there is an $L=L_{\varepsilon,C}$ such that for all $(q,c,t)\in (\varepsilon,1]\times [0,C]\times [0,1]$, we have
\[\left|\frac{d}{dq} \Lambda_{q,c}(t)\right|\le L\quad\text{ and }\quad\left|\frac{d}{dc} \Lambda_{q,c}(t)\right|\le L.\]
Thus, for all  $(q,c)\in (\varepsilon,1]\times [0,C]$, $(q',c')\in (\varepsilon,1]\times [0,C]$ and $t\in [0,1]$, we have
\[\left|\Lambda_{q,c}(t)-\Lambda_{q',c'}(t)\right|\le L(|q-q'|+|c-c'|),\]
so
\[\left|\lambda_{q,c}-\lambda_{q',c'}\right|\le L(|q-q'|+|c-c'|).\]
The continuity of $f(r,s)$ follows easily.
\end{proof}

By \eqref{Bettilimit01}  and \eqref{measurevsBetti0}, for all $r,s\in [0,\infty)$, we have
\[\lim_{n\to\infty} \eta_n(([0,r]\times [0,s])\cap \overline{\Delta})=f(r,s).\]

Thus, by Lemma~\ref{CDF}, there is a unique probability measure $\hat{\xi}_{\Ver,k-1}$ on $\overline{\Delta}$ such that for all $r,s\in [0,\infty)$, we have
\[\hat{\xi}_{\Ver,k-1}(([0,r]\times [0,s])\cap \overline{\Delta})=f(r,s).\]

 It follows from \eqref{Bettilimit01}  and \eqref{measurevsBetti0} that for all $r,s\ge 0$, we have
 \[\lim_{n\to\infty}\xi_{\Ver,k-1}(\mathcal{Y}_n,\kappa_n)(([0,r]\times [0,s])\cap \overline{\Delta})=f(s,r)\quad\text{ in probability}.\]

Thus,
\[\lim_{n\to\infty}\varrho\left(\xi_{\Ver,k-1}(\mathcal{Y}_n,\kappa_n)\right)=0\quad\text{ in probability}.\]
   Combining this with  part (d) of Lemma~\ref{CDF}, part (a) of Theorem~\ref{diagramconv} follows.

\subsection{Convergence of the persistence diagrams}

In this section, we prove part (b) of Theorem~\ref{diagramconv}.

From part (a) of Theorem~\ref{diagramconv}, it follows that $\xi_{k-1}(\mathcal{Y}_n,\kappa_n)$ converges vaguely to $\hat{\xi}_{k-1}$ in probability. Thus, it is enough to prove that $\xi_{k-1}(\mathcal{Y}_n,\kappa_n)(\Delta)$ converges to $\hat{\xi}_{k-1}(\Delta)$ in probability.

We define
\[\Phi_{q,c}(t)=1-c(1-qt)^k-\frac{c}{q(k+1)}\left(1-(1-qt)^{k+1}-q(k+1)t(1-qt)^k\right).\]

Using the elementary estimate that $|\exp(-x)-(1-x)|\le x^2$ for all $x\ge 0$, we see that for all $q\in (0,1]$, $c\in [0,\infty)$ and $t\in [0,1]$, we have
\[|\Phi_{q,c}(t)-\Lambda_{q,c}(t)|\le c^2.\]

Since
\[\Phi_{q,c}'(t)=ckq(1-t)(1-qt)^{k-1},\]
we see that $\Phi_{q,c}'(t)\ge 0$ for all $t\in [0,1]$.

Thus,
\[\max_{t\in [0,1]} \Phi_{q,c}(t)=\Phi_{q,c}(1)=1-\frac{c}{q(k+1)}\left(1-(1-q)^{k+1}\right).\]

Therefore,
\begin{equation}\label{lambdaestimate}\left|1-\frac{c}{q(k+1)}\left(1-(1-q)^{k+1}\right)-\lambda_{q,c}\right|\le c^2.\end{equation}

Note that
\begin{align*}\hat{\xi}_{\Ver,k-1}(((x,x+c]\times (x,x+c])\cap\overline{\Delta})&=\hat{\beta}_{k-1}^{x+c,x}+\hat{\beta}_{k-1}^{x,x+c}-\hat{\beta}_{k-1}^{x,x}-\hat{\beta}_{k-1}^{x+c,x+c}\\&=\exp(-x)(1-\lambda_{\exp(-x),c}).\end{align*}

Combining this with~\eqref{lambdaestimate}, we obtain that 
\begin{equation}\label{eqlec2}\left|\hat{\xi}_{\Ver,k-1}(((x,x+c]\times (x,x+c])\cap\overline{\Delta})-\frac{c}{k+1}\left(1-(1-\exp(-x))^{k+1}\right)\right|\le c^2.\end{equation}
% Therefore,
% \[\frac{d}{dc} \hat{\xi}_{\Ver,k-1}((x,x+c]\times (x,x+c])=\frac{1-(1-\exp(-x))^{k+1}}{k+1}.\]

% For $0\le a\le b\le\infty$, let 
% \[D_{a,b}=\{(x,x)\,:\,x\in [a,b]\}.\]

% It follows that
% \[\hat{\xi}_{\Ver,k-1}(D_{a,b})=\int_{a}^b \frac{1-(1-\exp(-x))^{k+1}}{k+1} dx.\]

Given  $S\subset [0,\infty)$, let
\[D_S=\{(x,x)\,:\,x\in S\}.\]
\begin{lemma}\label{Dab}
    For any $0\le a<b<\infty$, we have
    \[\hat{\xi}_{\Ver,k-1}(D_{(a,b]})=\int_{a}^b \frac{1-(1-\exp(-x))^{k+1}}{k+1} dx.\]
\end{lemma}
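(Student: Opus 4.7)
The plan is to sandwich the diagonal piece $D_{(a,b]}$ between two nested families of sets whose mass under $\hat{\xi}_{\Ver,k-1}$ we can simultaneously control: from inside by a disjoint union of small diagonal squares, to which the quadratic estimate \eqref{eqlec2} applies directly, and from outside by a shrinking diagonal strip whose mass converges to that of $D_{(a,b]}$ by continuity from above of the (finite) measure. The inner sum will then turn into a Riemann sum whose limit matches the right-hand side of the lemma.

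Concretely, for each $N\ge 1$ I would set $c=c_N=(b-a)/N$ and $x_i=a+ic$ for $i=0,1,\dots,N$, and define the pairwise disjoint squares
\[R_i=\bigl((x_{i-1},x_i]\times(x_{i-1},x_i]\bigr)\cap\overline{\Delta},\qquad U_N=\bigsqcup_{i=1}^N R_i,\]
together with the diagonal strip $S_c=\{(y,z)\in\overline{\Delta}:a<y\le b,\;z-y\le c\}$. Then $D_{(a,b]}\subset U_N\subset S_{c_N}$, the family $(S_{c_N})_{N\ge 1}$ is decreasing, and $\bigcap_{N\ge 1} S_{c_N}=D_{(a,b]}$ (since $0\le z-y\le c_N\to 0$ forces $z=y$). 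As $\hat{\xi}_{\Ver,k-1}$ is a probability measure, continuity from above yields $\hat{\xi}_{\Ver,k-1}(S_{c_N})\to \hat{\xi}_{\Ver,k-1}(D_{(a,b]})$, and the sandwich $D_{(a,b]}\subset U_N\subset S_{c_N}$ forces $\lim_{N\to\infty}\hat{\xi}_{\Ver,k-1}(U_N)=\hat{\xi}_{\Ver,k-1}(D_{(a,b]})$.

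To evaluate this limit directly, I would apply \eqref{eqlec2} to each $R_i$ with $x=x_{i-1}$ and $c=c_N$ and sum over $i$, obtaining
\[\hat{\xi}_{\Ver,k-1}(U_N)=\sum_{i=1}^N \frac{c_N}{k+1}\bigl(1-(1-\exp(-x_{i-1}))^{k+1}\bigr)+E_N,\qquad |E_N|\le N c_N^2=(b-a)c_N.\]
As $N\to\infty$, the error $E_N$ tends to zero, and the main term is a left Riemann sum for the continuous function $g(x)=(1-(1-\exp(-x))^{k+1})/(k+1)$ on $[a,b]$, which converges to $\int_a^b g(x)\,dx$. Combining the two displays gives the claim.

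Essentially every step is routine once \eqref{eqlec2} is in hand: the only mildly delicate point is the set-theoretic verification $\bigcap_{N\ge 1} S_{c_N}=D_{(a,b]}$, which is what lets the squeeze pick up exactly the diagonal mass rather than some larger quantity. The uniformity in $x$ of the $c^2$ error in \eqref{eqlec2} is already built into that estimate, so no additional work is needed there.
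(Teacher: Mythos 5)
Your proof is correct and follows essentially the same route as the paper: decompose the diagonal piece into small squares $((x_{i-1},x_i]\times(x_{i-1},x_i])\cap\overline{\Delta}$, apply the estimate \eqref{eqlec2} to each square, and pass to the limit via a Riemann sum, with the total error $Nc_N^2\to 0$. The only (harmless) difference is that the paper uses dyadic subdivisions so that the square-unions $W_\ell$ are themselves nested and continuity from above applies to them directly, whereas you recover $\hat{\xi}_{\Ver,k-1}(U_N)\to\hat{\xi}_{\Ver,k-1}(D_{(a,b]})$ by sandwiching the non-nested $U_N$ between $D_{(a,b]}$ and the shrinking nested strips $S_{c_N}$.
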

\begin{proof}
    For a positive integer $\ell$, we define $\delta_\ell=2^{-\ell}(b-a)$, and
    \[W_\ell=\bigcup_{i=0}^{2^\ell-1} \left(a+i\delta_\ell\,,\,a+(i+1)\delta_\ell\right]^2\cap\overline{\Delta}.\]
    Note that $W_1\supset W_2\supset \cdots$, and $\cap_{\ell=1}^\infty W_\ell=D_{(a,b]}$. Thus,
    \begin{equation}\label{Dabp1}
        \lim_{\ell\to\infty} \hat{\xi}_{\Ver,k-1}(W_\ell)=\hat{\xi}_{\Ver,k-1}(D_{(a,b]}).
    \end{equation}
    By \eqref{eqlec2}, we have
    \begin{equation}\label{Dabp2}\left|\hat{\xi}_{\Ver,k-1}(W_\ell)-\delta_\ell\sum_{i=0}^{2^\ell-1} \frac{1-\left(1-\exp\left(-\left(a+i\delta_\ell\right)\right)\right)^{k+1}}{k+1} \right|\le (b-a)^22^{-\ell}.\end{equation}
    Observe that
    \begin{equation}\label{Dabp3}
        \lim_{\ell\to \infty}\delta_\ell\sum_{i=0}^{2^\ell-1} \frac{1-\left(1-\exp\left(-\left(a+i\delta_\ell\right)\right)\right)^{k+1}}{k+1} =\int_{a}^b \frac{1-(1-\exp(-x))^{k+1}}{k+1} dx,
        \end{equation}
    since on the left we have Riemann-sums of the integral on the right. 
    
    The statement follows by combining \eqref{Dabp1}, \eqref{Dabp2} and \eqref{Dabp3}.
\end{proof}

Combining Lemma~\ref{Dab} with the fact that $\hat{\xi}_{\Ver,k-1}(\{(0,0)\})=0$, we obtain 
\begin{equation}\label{totalDelta0}
    \hat{\xi}_{\Ver,k-1}(D_{[0,\infty)})=\int_{0}^\infty \frac{1-(1-\exp(-x))^{k+1}}{k+1} dx=\frac{1}{k+1}\int_0^1 \frac{1-(1-t)^{k+1}}{t}dt=\frac{1}{k+1}\sum_{j=1}^{k+1}\frac{1}j,\end{equation}
where in the last step, we used the fact that

\[\int_0^1 \frac{1-(1-t)^{k+1}}{t}dt=\sum_{j=1}^{k+1}\frac{1}j,\]
which can be proved by induction, once we observe that
\[\int_0^1 \frac{1-(1-t)^{k+1}}{t}dt-\int_0^1 \frac{1-(1-t)^{k}}{t}dt=\int_0^1 (1-t)^k dt=\frac{1}{k+1}.\]

Clearly, the formula given in \eqref{totalDelta} follows from \eqref{totalDelta0}.

For any $t\ge 0$, let 
\[\mathcal{Y}_n(t-)=\{\sigma\in \mathcal{Y}_n\,:\,\kappa_n(\sigma)<t\}.\]

We say that $\sigma\in {{[n]}\choose{k+1}}$ is promoting if there is a $\tau\subset \sigma$ such that $|\tau|=k$ and $\tau\notin \mathcal{Y}_n(\kappa_n(\sigma)-)$.

For simplicity of notation, let $\xi_n=\xi_{\Ver,k-1}(\mathcal{Y}_n,\kappa_n)$.
\begin{lemma}\label{prom1}
Almost surely
\[\xi_n(D_{[0,\infty)})=\frac{\left|\left\{\sigma\in {{[n]}\choose{k+1}}\,:\,\sigma\text{ is promoting}\right\}\right|}{{{n-1}\choose{k}}}.\]

\end{lemma}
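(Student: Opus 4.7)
The plan is to fix a ``good basis'' $\mathcal{C}$ of $Z_{k-1}(\mathcal{Y}_n)$ as guaranteed by Lemma~\ref{lemmagoodbasis} and count how many of its elements $c$ satisfy $b(c)=d(c)$. Almost surely the values $\{\kappa_n(\sigma)\}_{\sigma\in\binom{[n]}{k+1}}$ are pairwise distinct, so this count decomposes over event times $t=\kappa_n(\sigma)$, at each of which a single $k$-simplex $\sigma$ (together with whichever of its subfaces are new) enters the filtration. Using $H_{k-1}(\mathcal{Y}_n)=0$ and $\dim H_k(\mathcal{Y}_n)=\binom{n-1}{k+1}$ one has $\dim Z_{k-1}(\mathcal{Y}_n)=\binom{n-1}{k}$, so it suffices to show that for each event time $t=\kappa_n(\sigma)$,
\[
\bigl|\{c\in\mathcal{C}:b(c)=d(c)=t\}\bigr|\;=\;\mathbbm{1}[\sigma\text{ is promoting}].
\]

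To compute the left-hand side, I would apply \eqref{eqgoodbases} to each of the four corners $(r,s)\in\{t-,t\}^2$ and combine via two-dimensional inclusion--exclusion. Because $B_{k-1}(\mathcal{Y}_n(t-))\subset Z_{k-1}(\mathcal{Y}_n(t-))\subset Z_{k-1}(\mathcal{Y}_n(t))$, three of the four intersections simplify to $\dim B_{k-1}$ of the appropriate complex, and after cancellation the count collapses to
\[
\dim B_{k-1}(\mathcal{Y}_n(t))\;-\;\dim\bigl[Z_{k-1}(\mathcal{Y}_n(t-))\cap B_{k-1}(\mathcal{Y}_n(t))\bigr].
\]

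A short linear-algebraic case analysis then finishes the proof. Writing $\mathcal{K}_-=\mathcal{Y}_n(t-)$ and $\mathcal{K}_+=\mathcal{Y}_n(t)$, one has $B_{k-1}(\mathcal{K}_+)=B_{k-1}(\mathcal{K}_-)+\mathrm{span}(\partial\sigma)$. If $\sigma$ is promoting, then $\partial\sigma$ has a nonzero coefficient on a $(k-1)$-simplex absent from $\mathcal{K}_-$, so $\partial\sigma\notin C_{k-1}(\mathcal{K}_-)$; this forces $\dim B_{k-1}(\mathcal{K}_+)=\dim B_{k-1}(\mathcal{K}_-)+1$, and any element $\alpha+\lambda\,\partial\sigma$ of $B_{k-1}(\mathcal{K}_+)$ that also lies in $C_{k-1}(\mathcal{K}_-)$ must have $\lambda=0$, giving $Z_{k-1}(\mathcal{K}_-)\cap B_{k-1}(\mathcal{K}_+)=B_{k-1}(\mathcal{K}_-)$ and hence difference $1$. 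If $\sigma$ is not promoting, then $\partial\sigma\in C_{k-1}(\mathcal{K}_-)\subset Z_{k-1}(\mathcal{K}_-)$, so $B_{k-1}(\mathcal{K}_+)\subset Z_{k-1}(\mathcal{K}_-)$ and the intersection equals all of $B_{k-1}(\mathcal{K}_+)$, yielding difference $0$. Summing the resulting indicator over event times gives the lemma. The only mildly subtle point is the clean collapse of the four-term inclusion--exclusion together with recognising that the dichotomy $\partial\sigma\in C_{k-1}(\mathcal{K}_-)$ vs.\ $\partial\sigma\notin C_{k-1}(\mathcal{K}_-)$ is exactly the ``promoting'' dichotomy; once that observation is made, the linear algebra is immediate.
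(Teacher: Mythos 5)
Your proof is correct and follows essentially the same route as the paper: both reduce the diagonal mass at an event time $t=\kappa_n(\sigma)$ to the quantity $\dim B_{k-1}(\mathcal{Y}_n(t))-\dim\bigl[Z_{k-1}(\mathcal{Y}_n(t-))\cap B_{k-1}(\mathcal{Y}_n(t))\bigr]$ (the paper via CDF differences of $\xi_n$, you via the good basis and four-corner inclusion--exclusion, which are the same thing through \eqref{eqgoodgases} read as \eqref{measurevsBetti0}) and then apply the identical promoting/non-promoting dichotomy for $\partial\sigma$. Your explicit linear-algebra argument simply spells out the step the paper leaves as ``one can see''.
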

\begin{proof}
It is clear from the construction of $\xi_n$ given in Section~\ref{secPD} that $\xi_n$ is purely atomic, and if we have an atom at $(s,s)$, then we must have a $\sigma \in {{[n]}\choose{k+1}}$ such that $\kappa_n(\sigma)=s$. By the definition of $\kappa_n$, almost surely $\kappa_n$ is injective.  

Thus, conditioned on the event that $\sigma$ is a unique element of  ${{[n]}\choose{k+1}}$ such that $\kappa_n(\sigma)=s$, let us try to understand the event that $\xi_n$ has an atom at $(s,s)$. We have
\begin{align*}\xi_n(\{(s,s)\})&=\xi_n(([0,s]\times [0,s])\cap \overline{\Delta}))-\sup_{r<s}\xi_n(([0,r]\times [0,s])\cap \overline{\Delta}))\\&=\frac{\dim B_{k-1}(\mathcal{Y}_n(s))-\dim Z_{k-1}(\mathcal{Y}_n(s-))\cap B_{k-1}(\mathcal{Y}_n(s))}{{{n-1}\choose{k}}}.\end{align*}

If $\sigma$ is not promoting, then 
\begin{multline*}\dim B_{k-1}(\mathcal{Y}_n(s))-\dim Z_{k-1}(\mathcal{Y}_n(s-))\cap B_{k-1}(\mathcal{Y}_n(s))\\=\dim B_{k-1}(\mathcal{Y}_n(s))-\dim Z_{k-1}(\mathcal{Y}_n(s))\cap B_{k-1}(\mathcal{Y}_n(s))=0.\end{multline*}

If $\sigma$ is promoting, then one can see that $\dim Z_{k-1}(\mathcal{Y}_n(s-))\cap B_{k-1}(\mathcal{Y}_n(s))=B_{k-1}(\mathcal{Y}_n(s-))$. Therefore, it follows that
\begin{multline*}\dim B_{k-1}(\mathcal{Y}_n(s))-\dim Z_{k-1}(\mathcal{Y}_n(s-))\cap B_{k-1}(\mathcal{Y}_n(s))\\=\dim B_{k-1}(\mathcal{Y}_n(s))-\dim B_{k-1}(\mathcal{Y}_n(s-))=1.\end{multline*}
Thus, the statement follows.
\end{proof}

\begin{lemma}\label{prom2}
\[\mathbb{E}\frac{\left|\left\{\sigma\in {{[n]}\choose{k+1}}\,:\,\sigma\text{ is promoting}\right\}\right|}{{{n-1}\choose{k}}}=\frac{1}{k+1}\int_0^n 1-\left(1-\left(1-\frac{x}n\right)^{n-k-1}\right)^{k+1}  dx.\]
\end{lemma}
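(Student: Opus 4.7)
The plan is a direct computation by linearity of expectation after conditioning on the arrival time $\kappa_n(\sigma)$. For a fixed $\sigma\in\binom{[n]}{k+1}$, the definition of ``promoting'' unpacks as follows: since $\kappa_n(\tau)=\min_{\sigma'\supset\tau}\kappa_n(\sigma')$ over $\sigma'\in\binom{[n]}{k+1}$, a $k$-face $\tau\subset\sigma$ fails to lie in $\mathcal{Y}_n(\kappa_n(\sigma)-)$ exactly when $\kappa_n(\sigma')\ge \kappa_n(\sigma)$ for every $\sigma'\supset\tau$ distinct from $\sigma$. Thus $\sigma$ is promoting iff some $k$-face $\tau\subset\sigma$ is a ``witness'' in this sense.

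First I would fix $\sigma$ and condition on $\kappa_n(\sigma)=x$. For each of the $k+1$ faces $\tau\subset\sigma$ of size $k$, let $A_\tau$ be the event that $\tau$ is a witness. The set of $(k+1)$-faces $\sigma'\supset\tau$ with $\sigma'\ne\sigma$ has the form $\{\tau\cup\{v\}:v\in[n]\setminus\sigma\}$ and therefore has cardinality $n-k-1$. Since the $\kappa_n(\sigma')$ are i.i.d.\ uniform on $[0,n]$ and independent of $\kappa_n(\sigma)$,
\[
\mathbb{P}(A_\tau\mid \kappa_n(\sigma)=x)=\Bigl(1-\tfrac{x}{n}\Bigr)^{n-k-1}.
\]
The key observation is that the events $(A_\tau)_{\tau\subset\sigma,\,|\tau|=k}$ are (conditionally) \emph{independent}: for two distinct $k$-faces $\tau_1,\tau_2\subset\sigma$, any $(k+1)$-face containing both must contain $\tau_1\cup\tau_2=\sigma$, so the families $\{\sigma'\supset\tau_i,\,\sigma'\ne\sigma\}$ for $i=1,2$ are disjoint subsets of $\binom{[n]}{k+1}\setminus\{\sigma\}$, and $A_{\tau_1},A_{\tau_2}$ depend on disjoint collections of independent $\kappa_n(\sigma')$.

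Therefore
\[
\mathbb{P}(\sigma\text{ is promoting}\mid \kappa_n(\sigma)=x)=1-\Bigl(1-\bigl(1-\tfrac{x}{n}\bigr)^{n-k-1}\Bigr)^{k+1}.
\]
Integrating against the uniform density $\frac{1}{n}\,dx$ on $[0,n]$ and summing over $\sigma\in\binom{[n]}{k+1}$,
\[
\mathbb{E}\bigl|\{\sigma:\sigma\text{ promoting}\}\bigr|=\frac{1}{n}\binom{n}{k+1}\int_0^n\!\!\Bigl(1-\bigl(1-(1-\tfrac{x}{n})^{n-k-1}\bigr)^{k+1}\Bigr)\,dx.
\]
The final step is the identity $\binom{n}{k+1}/\bigl(n\binom{n-1}{k}\bigr)=1/(k+1)$, which gives the claimed formula after dividing by $\binom{n-1}{k}$.

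There is no real obstacle: the whole argument hinges on unpacking the definition of promoting, checking the disjointness that yields independence of the $A_\tau$, and carrying out the binomial bookkeeping. The only place one has to be mildly careful is using continuity of $\kappa_n$ to treat the ``$\ge$ vs.\ $>$'' distinction in defining $A_\tau$ as a probability-zero difference.
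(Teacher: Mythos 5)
Your proof is correct and takes essentially the same approach as the paper: condition on $\kappa_n(\sigma)=x$, observe that the families of cofaces of the $k+1$ facets $\tau\subset\sigma$ (other than $\sigma$ itself) are disjoint so the corresponding events are independent, compute the conditional probability $1-\bigl(1-(1-\tfrac{x}{n})^{n-k-1}\bigr)^{k+1}$, and integrate and sum. The only difference is that you spell out the independence/disjointness argument and the binomial identity $\binom{n}{k+1}/\bigl(n\binom{n-1}{k}\bigr)=\tfrac{1}{k+1}$, which the paper leaves implicit.
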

\begin{proof}
    Let $\sigma\in {{[n]}\choose{k+1}}$, and $t\in [0,1]$. Condition on the event that $\kappa_n(\sigma)=tn$. Note that $\sigma$ is not promoting if and only if for all $k$ element subsets $\tau$ of $\sigma$, we have a $v\in [n]\setminus \sigma $ such that $\kappa_n(\tau\cup \{v\})<tn$. The probability of this event is
    \[(1-(1-t)^{n-k-1})^{k+1}.\]
    Integrating over all possible choices of $t$, we get that
    \[\mathbb{P}(\sigma\text{ is promoting})=\int_0^1 1-(1-(1-t)^{n-k-1})^{k+1} dt=\frac{1}{n}\int_0^n 1-\left(1-\left(1-\frac{x}n\right)^{n-k-1}\right)^{k+1}  dx .\]

    Summing this over all the possible ${{n}\choose {k+1}}$ choices of $\sigma$ the statement follows.
    \end{proof}

Therefore, on the one hand, combining Lemma~\ref{prom1}, Lemma~\ref{prom2} and \eqref{totalDelta0}, we have
\begin{align*}\liminf_{n\to\infty}\mathbb{E} \xi_{\Ver,k-1}(\mathcal{Y}_n,\kappa_n)(D_{[0,\infty)})&=\liminf_{n\to\infty} \frac{1}{k+1}\int_0^n 1-\left(1-\left(1-\frac{x}n\right)^{n-k-1}\right)^{k+1}  dx\\&\ge \frac{1}{k+1}\int_0^\infty \liminf_{n\to\infty} \left( 1-\left(1-\left(1-\frac{x}n\right)^{n-k-1}\right)^{k+1} \right) dx\\&=\frac{1}{k+1}\int_0^\infty 1-(1-\exp(-x))^{k+1} dx\\&= \hat{\xi}_{\Ver,k-1}(D_{[0,\infty)}). \end{align*}

On the other hand, from part (a) of Theorem~\ref{diagramconv}, for any $\varepsilon>0$, we have
\[\lim_{n\to\infty}\mathbb{P}\left( \xi_{\Ver,k-1}(\mathcal{Y}_n,\kappa_n)(D_{[0,\infty)})\le \hat{\xi}_{\Ver,k-1}(D_{[0,\infty)})+\varepsilon\right)=1.\]

Thus, $\xi_{\Ver,k-1}(\mathcal{Y}_n,\kappa_n)(\Delta)$ converge in probability to \[\hat{\xi}_{\Ver,k-1}(\Delta)= 1-\hat{\xi}_{\Ver,k-1}(D_{[0,\infty)})=1-\frac{1}{k+1}\sum_{j=1}^{k+1}\frac{1}j,\]
where the last equality follows from \eqref{totalDelta0}.

\subsection{Unbounded observables -- The proof of Theorem~\ref{thmunboundedobs}}

For simplicity of notation, let $\xi_n=\xi_{\Ver,k-1}(\mathcal{Y}_n,\kappa_n)$ and $\xi_\infty=\hat{\xi}_{\Ver,k-1}$.

Clearly, we may assume that $f$ is nonnegative. For any $L>0$, let $f_L(r,s)=\min(f(r,s),L)$. Then

\[\int f d \xi_n=\int f_Ld\xi_n+\int_{L}^\infty \xi_n(f^{-1}([x,\infty)))dx.\]

 Recall that earlier, we defined
\[\overline{\Delta}(u)=\{(r,s)\in \overline{\Delta}\,:\,s>u\}.\]

Choose $M<\infty$ such that $f(r,s)<M(1+s^\ell)$ for all $(r,s)\in \overline{\Delta}$. Assuming that $x>M$, we have

\[f^{-1}([x,\infty))\subset \overline{\Delta}\left(\sqrt[\ell]{\frac{x}{M}-1}\right).\]

Thus, assuming that $L>M$, we see that
\[\left|\int f d\xi_n-\int f_L d\xi_n\right|\le \int_{L}^\infty \xi_n\left(\overline{\Delta}\left(\sqrt[\ell]{\frac{x}{M}-1}\right)\right)dx. \]

Similarly,
\[\left|\int f d\xi_\infty-\int f_L d\xi_\infty\right|\le \int_{L}^\infty \xi_\infty\left(\overline{\Delta}\left(\sqrt[\ell]{\frac{x}{M}-1}\right)\right)dx. \]

%\[\mathbb{E}\frac{\dim \overline{\mathcal{Y}_n}(t))}{{{n-1}\choose{k}}}\le Ct^{-2\ell}. \]
By Lemma~\ref{tightnesslemma}, we see that
\[\mathbb{E}(\xi_n(\overline{\Delta}(t)))\le Ct^{-2\ell}.\]
It is straightforward to see that this implies that
\[\xi_\infty(\overline{\Delta}(t))\le Ct^{-2\ell}.\]

Thus, for all $n$ and $L>M$, we have
\[\mathbb{E}\left|\int f d\xi_n-\int f_L d\xi_n\right|\le C\int_{L}^\infty \left(\frac{x}{M}-1\right)^{-2} dx,\]
and
\[\left|\int f d\xi_\infty-\int f_L d\xi_\infty\right|\le C\int_{L}^\infty \left(\frac{x}{M}-1\right)^{-2} dx.\]

Since for any fixed $L$, the integral $\int f_L d\xi_\infty$ converges to $\int f_L d\xi_\infty$ in probability by Theorem~\ref{diagramconv}, Theorem~\ref{thmunboundedobs} follows easily by tending to infinity with $L$.

\section{The rank of sparse matrices -- The proof of Theorem~\ref{RankThm} }\label{SecRankThm}

Throughout this section, given a matrix $M_n$, we always assume that the columns of $M_n$ are indexed by $C_n$, the rows of $M_n$ are indexed by $R_n$ and the Tanner graph of $M_n$ is denoted by~$G_n.$ Similarly, given a matrix $M_n'$, the columns of $M_n'$ are indexed by $C_n'$, the rows of $M_n'$ are indexed by $R_n'$ and the Tanner graph of $M_n'$ is denoted by $G_n'$, and so on.

\subsection{A regularization result}

Let $G$ be a finite graph and let $U$ be a nonempty subset of $V(G)$, we define
\[\overline{\deg}_G(U)=\frac{1}{|U|}\sum_{u\in U} \deg_G(u).\]

Let $\mu,\nu$ be distributions on the nonnegative integers with finite second moments. As before, $\bar{\mu}$ and $\bar{\nu}$ denote the mean of $\mu$ and $\nu$, respectively. We assume that $\bar{\nu}>0$ and $\bar{\mu}>0$.

\begin{definition}
Let $M_n$ be a sequence of deterministic matrices over the reals. We say that $M_n$ is a $(\mu,\nu)$-regular sequence if
\begin{enumerate}[(i)]
    \item All the entries of $M_n$ are from the set $\{-1,0+1\}$;
    \item The set-rooted graph $(G_n,C_n)$ converges to a $\GW_*(\mu,\nu)$-tree;
    \item \[\lim_{n\to\infty}\overline{\deg}_{G_n}(C_n)=\bar{\mu};\]
    \item The set-rooted graph $(G_n,R_n)$ converges to a $\GW_*(\nu,\mu)$-tree;
    \item \[\lim_{n\to\infty}\overline{\deg}_{G_n}(R_n)=\bar{\nu}.\]
\end{enumerate}

\end{definition}

\begin{lemma}\label{LemmaReg}
Let $M_n$ be a sequence of deterministic matrices with entries from the set $\{-1,0,+1\}$. 
Assume that $(G_n,C_n)$ converges to a $\GW_*(\mu,\nu)$-tree. Then we can find a deterministic $(\mu,\nu)$-regular sequence of matrices $M_n'$ such that \[\lim_{n\to\infty}\left|\frac{\rank(M_n)}{|C_n|}-\frac{\rank(M_n')}{|C_n'|}\right|=0.\]

\end{lemma}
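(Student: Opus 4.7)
The plan is to build $M_n'$ by truncating $M_n$: delete from the Tanner graph $G_n$ all vertices of excessively large degree. The key principle is that deleting any single row or column alters the rank by at most one, so the normalized rank is stable under deletion of an asymptotically vanishing set of rows and columns.

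Concretely, for a threshold $D$, I would first remove the columns with degree greater than $D$ in $G_n$, and then, in the remaining subgraph, remove the rows with degree greater than $D^2$. This yields a submatrix $M_n^{(D)}$ with Tanner graph $G_n^{(D)}$ and row/column index sets $R_n^{(D)}$, $C_n^{(D)}$. Since removing one row or column changes the rank by at most one,
\[
|\rank(M_n) - \rank(M_n^{(D)})| \le |C_n \setminus C_n^{(D)}| + |R_n \setminus R_n^{(D)}|.
\]
By the assumed local convergence $(G_n, C_n) \to \GW_*(\mu,\nu)$, the first term is asymptotically $|C_n|\cdot \mu((D,\infty))$, which is small for large $D$. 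After removing the high-degree columns, the remaining graph has at most $D |C_n|$ edges, so Markov gives $|R_n \setminus R_n^{(D)}| \le |C_n|/D$.

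Next, I would check that the truncated sequence $M_n^{(D_n)}$, with $D_n \to \infty$ slowly, satisfies the five regularity conditions. Condition (ii) follows by combining the original local convergence with Lemmas~\ref{localweak1}--\ref{localweak3}: for any fixed rooted tree of bounded maximum degree, the truncation is invisible once $D_n$ is large enough. With degrees uniformly bounded in $G_n^{(D_n)}$, bounded convergence yields condition (iii), namely $\overline{\deg}_{G_n^{(D_n)}}(C_n^{(D_n)}) \to \bar\mu$, using that $\mu$ has finite mean so the truncated means $\bar\mu^{(D_n)}$ converge to $\bar\mu$.

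The principal obstacle is conditions (iv) and (v), which concern local convergence seen from a uniformly random row. These are not implied by column-side convergence alone: an isolated column of enormous degree is invisible to column-sampling yet may dominate the row-rooted picture. Once all degrees are bounded, however, the distribution of a uniformly random edge's neighborhood is fully determined by the $\GW_*(\mu,\nu)$ limit, and this in turn determines the size-biased row-rooted distribution. Combined with the edge identity $\overline{\deg}(R_n^{(D_n)})|R_n^{(D_n)}| = \overline{\deg}(C_n^{(D_n)}) |C_n^{(D_n)}|$ and bounded convergence for the average row degree, one can un-size-bias to recover the uniform row-rooted distribution, obtaining the $\GW_*(\nu,\mu)$-tree. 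A standard diagonal choice of $D_n \to \infty$ sufficiently slowly then secures all five conditions at once, yielding a deterministic $(\mu,\nu)$-regular sequence $M_n' = M_n^{(D_n)}$ with $|\rank(M_n) - \rank(M_n')|/|C_n| \to 0$ and $|C_n^{(D_n)}|/|C_n| \to 1$, as required.
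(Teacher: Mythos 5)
Your truncation strategy handles the high-degree vertices, but it misses the issue that actually forces the paper's more elaborate construction: the hypothesis constrains only what is visible from a uniformly random \emph{column}, and rows of degree zero (or, more generally, the total size of $R_n$) are completely invisible from that side. Nothing prevents $M_n$ from containing, say, $n^{10}$ all-zero rows in addition to a well-behaved nonzero pattern; $(G_n,C_n)$ still converges to the $\GW_*(\mu,\nu)$-tree, but your $M_n^{(D_n)}$ retains all of these rows, so a uniformly random row is isolated with probability tending to $1$ and $\overline{\deg}_{G_n^{(D_n)}}(R_n^{(D_n)})\to 0\neq\bar\nu$. Thus conditions (iv) and (v) genuinely fail for your construction. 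Your proposed rescue does not close this gap: the edge count only determines the product $\overline{\deg}(R_n^{(D_n)})\,|R_n^{(D_n)}|\approx\bar\mu\,|C_n^{(D_n)}|$, not the two factors separately, and ``un-size-biasing'' the edge-rooted (row-endpoint) distribution can only recover the uniform distribution on rows of \emph{positive} degree; the fraction of degree-zero rows, which must equal $\nu_0$ for (iv), is simply not determined by the data you have. This is exactly why the paper first reduces to the case $\nu_0=0$ (replacing $\nu$ by its conditioned version $\nu^c$), deletes every row that is isolated after the column deletion, runs the degree-biasing/re-rooting argument (Lemmas~\ref{bias1}--\ref{munuswitch}) to get the row-rooted limit, and finally \emph{adds back a number of all-zero rows of its own choosing} to realize $\nu_0>0$; the freedom to choose that number is what makes (iv) and (v) attainable, and it is unavailable in a pure submatrix/truncation construction like yours.

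Two smaller points. First, the rank estimate is easier than you make it: the paper deletes only columns (plus rows that thereby become zero rows, which cannot change the rank), so $|\rank(M_n)-\rank(M_n')|\le|C_n|-|C_n'|$ with no separate row-deletion bookkeeping. Second, your claim that ``bounded convergence yields (iii)'' is not quite right as stated, since with $D_n\to\infty$ the degrees are not uniformly bounded and local convergence alone does not give convergence of the mean degree; this is fixable by choosing the threshold as in the paper's Lemma~\ref{degreg} (truncated empirical mean within $d_n^{-1}$ of the limit mean, plus Fatou for the lower bound), but it needs that care rather than a one-line appeal to bounded convergence.
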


\subsection{The proof of the regularization result (Lemma~\ref{LemmaReg})}

\begin{lemma}\label{degreg}
    Let $(G_n,U_n)$ be a deterministic sequence of set rooted graphs. Assume that $(G_n,U_n)$ converge locally to $(G,o)$ and $\mathbb{E}\deg_G(o)<\infty$. Then there are $U_n'\subset U_n$ with the following properties. Let $G_n'$ be obtained from $G_n$ by deleting the vertices in $U_n\setminus U_n'$. Then
    \begin{enumerate}[(i)]
        \item $\lim_{n\to\infty} \frac{|U_n'|}{|U_n|}=1$;
        \item $(G_n',U_n')$ converges locally to $(G,o)$;
        \item 
        \[\lim_{n\to\infty}\overline{\deg}_{G_n'}(U_n')=\mathbb{E}\deg_G(o).\]
    \end{enumerate}
\end{lemma}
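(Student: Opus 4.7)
The plan is to take $U_n' = \{u \in U_n : \deg_{G_n}(u) \le D_n\}$ for a slowly-growing threshold $D_n \to \infty$, chosen by a diagonal argument. Two consequences of local weak convergence drive the choice. First, for fixed $r$ and $D$, the quantity $q_{r,D}^{(n)} := |\{u \in U_n : B_r(G_n, u) \text{ contains a vertex of } G_n\text{-degree} > D\}|/|U_n|$ is a finite linear combination of the ball-frequencies $t_{r+1}(G_n, U_n, \cdot)$, since the degrees of vertices in $B_r$ are encoded in $B_{r+1}$; hence $q_{r,D}^{(n)} \to q_{r,D} := \mathbb{P}(B_r(G, o) \text{ has a vertex of degree} > D)$, and $q_{r,D} \to 0$ as $D \to \infty$ by local finiteness of $(G, o)$. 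Second, the truncated mean $m_D^{(n)} := \frac{1}{|U_n|}\sum_{u \in U_n,\, \deg_{G_n}(u) \le D} \deg_{G_n}(u)$ converges to $\mathbb{E}[\deg_G(o)\mathbf{1}_{\deg_G(o) \le D}]$, which tends to $\mathbb{E}\deg_G(o)$ as $D \to \infty$ by monotone convergence. A standard $\varepsilon$-$1/K$ diagonal construction then yields $D_n \to \infty$ with $q_{r, D_n}^{(n)} \to 0$ for every fixed $r$, and $m_{D_n}^{(n)} \to \mathbb{E}\deg_G(o)$.

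Property (i) is then immediate, since $|U_n \setminus U_n'|/|U_n| = q_{0, D_n}^{(n)} \to 0$. For (ii), I fix $r$ and a rooted graph $(H, q)$ of depth at most $r$. If $u \in U_n'$ is such that $B_r(G_n, u)$ contains no vertex of degree $> D_n$, then $B_r(G_n, u)$ is disjoint from $U_n \setminus U_n'$, so the deletion of $U_n \setminus U_n'$ preserves this ball and $B_r(G_n', u) = B_r(G_n, u)$. The fraction of $u \in U_n'$ violating this good event is at most $q_{r, D_n}^{(n)}/(1 - q_{0, D_n}^{(n)}) \to 0$. Since $(H, q)$ has a fixed root degree $d_H$, for $n$ large the condition $\deg_{G_n}(u) \le D_n$ is automatic on $\{B_r(G_n, u) \cong (H, q)\}$; combining with $|U_n'|/|U_n| \to 1$ and the assumed convergence $(G_n, U_n) \to (G, o)$, this yields $t_r(G_n', U_n', (H, q)) \to \mathbb{P}(B_r(G, o) \cong (H, q))$.

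For (iii), let $u_n'$ be uniform in $U_n'$. The upper bound comes for free by monotonicity: $\deg_{G_n'}(u_n') \le \deg_{G_n}(u_n')$, and $\mathbb{E}\deg_{G_n}(u_n') = m_{D_n}^{(n)}\cdot |U_n|/|U_n'| \to \mathbb{E}\deg_G(o)$ by the diagonal choice. For the lower bound, (ii) gives $\deg_{G_n'}(u_n') \to \deg_G(o)$ in distribution, so for each fixed $D$ the bounded-continuous functional $\min(\cdot, D)$ converges in expectation: $\mathbb{E}\min(\deg_{G_n'}(u_n'), D) \to \mathbb{E}\min(\deg_G(o), D)$. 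Hence $\liminf_n \overline{\deg}_{G_n'}(U_n') \ge \mathbb{E}\min(\deg_G(o), D)$ for every $D$, and sending $D \to \infty$ via monotone convergence closes the gap. The main subtlety, and the reason for this Fatou-style argument, is that without additional moment assumptions on $(G, o)$ one cannot quantitatively control the number of edges between $U_n'$ and $U_n \setminus U_n'$; the monotone upper bound paired with the distributional lower bound sidesteps this obstacle entirely.
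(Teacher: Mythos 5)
Your proof is correct and follows essentially the same route as the paper: both delete the vertices of $U_n$ whose degree exceeds a slowly growing threshold (your diagonal choice of $D_n$ versus the paper's intrinsically defined $d_n$), verify (i) and (ii) by the rarity of high-degree vertices in $r$-balls and the fact that untouched balls are preserved, and prove (iii) by combining the truncated-mean upper bound with a Fatou-type lower bound from local convergence. The only differences are cosmetic, e.g.\ you make the truncation/Fatou step in (iii) explicit where the paper simply invokes Fatou's lemma.
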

\begin{proof}
    Let $u_n$ be a uniform random element of $U_n$. Let us define
    \[d_n=\sup\{d\,:\,\mathbb{E}\mathbbm{1}(\deg_{G_n}(u_n)<d)\deg_{G_n}(u_n)<\mathbb{E}\deg_G(o)+d^{-1}\}.\]

    We claim that $d_n$ converges to infinity. Indeed, given any $d$, since $(G_n,U_n)$ converges to~$(G,o)$, we have 
    \[\lim_{n\to\infty} \mathbb{E}\mathbbm{1}(\deg_{G_n}(u_n)<d)\deg_{G_n}(u_n)=\mathbb{E}\mathbbm{1}(\deg_G(o)<d)\deg_G(o)\le \mathbb{E}\deg_G(o)<\mathbb{E}\deg_G(o)+d^{-1}. \]

    Thus, $d_n\ge d$ for any large enough $n$.

    Let
    \[U_n'=\{u\in U_n\,:\, \deg_{G_n}(u)<d_n\}.\]

    Since $d_n$ tends to infinity, for any $d$, provided that $n$ is large enough, we have
    \[\{u\in U_n\,:\, \deg_{G_n}(u)<d\}\subset U_n'.\]
    Thus, it follows that
    \[\liminf_{n\to\infty} \frac{|U_n'|}{|U_n|}\ge \mathbb{P}(\deg_G(o)<d).\]
    Tending to infinity with $d$, we obtain that
    \[\liminf_{n\to\infty} \frac{|U_n'|}{|U_n|}\ge \lim_{d\to\infty}\mathbb{P}(\deg_G(o)<d)=1.\]

    Fix a nonnegative integer $r$, and let $(H,p)$ be a deterministic rooted graph of depth at most~$r$. Let us choose a $d$ larger than the maximum degree of $H$. Let $U_{n,r,d}$ be the set of vertices $u\in U_n$ such that the $r$-neighborhood of $u$ contains a vertex of degree at least $d$. For any $\varepsilon>0$, assuming that $d$ is large enough, the probability that $B_r(G,o)$ contains a vertex of degree at least $d$ is less than $\varepsilon$. Then it follows that for all large enough $n$, we have $|U_{n,r,d}|<\varepsilon |U_n|$. Thus,
    \[\left|\mathbb{P}(u_n\in U_n'\text{ and }B_r(G_n',u_n)\cong(H,p))-\mathbb{P}(B_r(G_n,u_n)\cong(H,p))\right|\le \frac{|U_{n,r,d}|}{|U_n|}<\varepsilon.\]
    Also, for large enough $n$, we have
    \[\left|\mathbb{P}(B_r(G,o)\cong(H,p))-\mathbb{P}(B_r(G_n,u_n)\cong(H,p))\right|<\varepsilon,\]
    and since $\frac{|U_n'|}{|U_n|}$ tends to $1$, for all large enough $n$, we have
    \[\left|\mathbb{P}(u_n\in U_n'\text{ and }B_r(G_n',u_n)\cong(H,p))-\mathbb{P}(B_r(G_n',u_n)\cong(H,p)\, |\,u_n\in U_n')\right|<\varepsilon.\]
    Therefore, assuming that $n$ is large enough
    \[\left|\mathbb{P}(B_r(G,o)\cong(H,p))-\mathbb{P}(B_r(G_n',u_n)\cong(H,p)\, |\,u_n\in U_n')\right|<3\varepsilon.\]
    Thus, it follows that $(G_n',U_n')$ converges to $(G,o)$.

    Finally, on the one hand, we have
    \begin{multline*}\limsup_{n\to\infty} \overline{\deg}_{G_n'}(U_n')\le\limsup_{n\to\infty} \frac{|U_n|}{|U_n'|}\mathbb{E}\mathbbm{1}(\deg_{G_n}(u_n)<d_n)\deg_{G_n}(u_n)\\\le \limsup_{n\to\infty} \left(\mathbb{E}\deg_G(o)+d_n^{-1}\right)=\mathbb{E}\deg_G(o). \end{multline*}
    On the other hand, from Fatuo's lemma and the fact that $(G_n',U_n')$ converges to $(G,o)$, we have
    \[\liminf_{n\to\infty} \overline{\deg}_{G_n'}(U_n')\ge \mathbb{E}\deg_G(o).\]
    Thus,
    \[\lim_{n\to\infty} \overline{\deg}_{G_n'}(U_n')= \mathbb{E}\deg_G(o).\qedhere\]
\end{proof}

Let $f:\mathbb{Z}_{0\le} \to [0,\infty)$. Given a random rooted graph $(G,o)$ such that $0<\mathbb{E}f(\deg_G(o))<\infty$, the $f$-degree biased version of $(G,o)$ is a random rooted graph $(G^f,o^f)$ such that for every deterministic rooted graphs $(H,p)$ of depth at most $r\ge 1$, we have
\[\mathbb{P}(B_r(G^f,o^f)\cong (H,p))=\frac{f(\deg_H(p))}{\mathbb{E}f(\deg_G(o))} \mathbb{P}(B_r(G,o)\cong (H,p)).\]
Note that the law of $(G^f,o^f)$ is uniquely determined by these requirements. 

\begin{lemma}\label{bias1}
Let $(G_n,o_n)$ be a sequence of random rooted graphs with local weak limit $(G,o)$. Assume that $0<\mathbb{E}f(\deg_G(o))<\infty$ and $0<\mathbb{E}f(\deg_{G_n}(o_n))<\infty$ for all $n$.
\begin{enumerate}[(a)]
    \item If $\lim_{n\to\infty}\mathbb{E}f(\deg_{G_n}(o_n))=\mathbb{E}f(\deg_G(o))$, then $(G_n^f,o_n^f)$ converges locally to $(G^f,o^f)$.
    \item If $f$ is bounded, then $(G_n^f,o_n^f)$ converges locally to $(G^f,o^f)$.
\end{enumerate}
\end{lemma}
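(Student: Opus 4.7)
The plan is to verify the definition of local weak convergence directly from the defining property of the $f$-degree biased law. Part (a) is essentially a one-line computation; part (b) is then obtained by showing that the bounded hypothesis forces the normalizing constants to converge, reducing it to part (a).

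For part (a), fix $r\ge 1$ and a rooted graph $(H,p)$ of depth at most $r$. By the defining relation of the biased law applied at level $n$,
\[\mathbb{P}\bigl(B_r(G_n^f,o_n^f)\cong(H,p)\bigr)=\frac{f(\deg_H(p))}{\mathbb{E}f(\deg_{G_n}(o_n))}\,\mathbb{P}\bigl(B_r(G_n,o_n)\cong(H,p)\bigr).\]
The local convergence $(G_n,o_n)\to(G,o)$ gives $\mathbb{P}(B_r(G_n,o_n)\cong(H,p))\to\mathbb{P}(B_r(G,o)\cong(H,p))$, and the hypothesis $\mathbb{E}f(\deg_{G_n}(o_n))\to\mathbb{E}f(\deg_G(o))>0$ lets us pass the quotient to the limit, yielding
\[\mathbb{P}\bigl(B_r(G_n^f,o_n^f)\cong(H,p)\bigr)\longrightarrow\frac{f(\deg_H(p))}{\mathbb{E}f(\deg_G(o))}\,\mathbb{P}\bigl(B_r(G,o)\cong(H,p)\bigr)=\mathbb{P}\bigl(B_r(G^f,o^f)\cong(H,p)\bigr).\]
The case $r=0$ is trivial since $B_0$ of any rooted graph is a single rooted vertex, so both probabilities equal $1$.

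For part (b), in view of part (a) it is enough to prove that $\mathbb{E}f(\deg_{G_n}(o_n))\to\mathbb{E}f(\deg_G(o))$ whenever $f$ is bounded. For any fixed integer $d\ge 0$, the event $\{\deg_{G_n}(o_n)=d\}$ is a disjoint union over rooted graphs $(H,p)$ of depth $1$ with $\deg_H(p)=d$ of the events $\{B_1(G_n,o_n)\cong(H,p)\}$; hence local weak convergence yields $\mathbb{P}(\deg_{G_n}(o_n)=d)\to\mathbb{P}(\deg_G(o)=d)$ for every $d$. Since these are probability measures on $\mathbb{Z}_{\ge 0}$ whose point masses converge, Scheff\'e's lemma gives convergence in total variation, and so $\mathbb{E}g(\deg_{G_n}(o_n))\to\mathbb{E}g(\deg_G(o))$ for every bounded $g$, in particular for $g=f$. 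Combined with part (a), this finishes the proof.

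The only mildly delicate step is the convergence of $\mathbb{E}f(\deg_{G_n}(o_n))$ in part (b); this is handled by the Scheff\'e argument, which uses that a.s.\ convergence of point masses of probability distributions on a discrete set automatically upgrades to $L^1$ convergence, so no separate uniform integrability input is needed beyond boundedness of $f$.
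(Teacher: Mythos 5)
Your proposal is correct and follows essentially the same route as the paper: part (a) by direct computation from the defining relation of the $f$-biased law, and part (b) by reducing to (a) after noting that local convergence gives $\mathbb{P}(\deg_{G_n}(o_n)=d)\to\mathbb{P}(\deg_G(o)=d)$ for every $d$, hence $\mathbb{E}f(\deg_{G_n}(o_n))\to\mathbb{E}f(\deg_G(o))$ for bounded $f$. The paper merely asserts these two steps; your Scheff\'e argument is a fine way of filling in the second.
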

\begin{proof}
Part (a) follows straight from the definitions. Part (b) follows from part (a) and the fact that $\lim_{n\to\infty}\mathbb{E}f(\deg_{G_n}(o_n))=\mathbb{E}f(\deg_G(o))$ for a bounded $f$.
\end{proof}

When $f(i)=i$ for all, we simply call the $f$-degree biased version of $(G,o)$ the degree biased version of $(G,o)$, and we denote it by $(\hat{G},\hat{o})$. Let $\mathfrak{f}(i)=\mathbbm{1}(i>0)\frac{1}i$. 

\begin{lemma}\label{bias2}
Let $(G,o)$ be a random rooted graph such that $\deg_G(o)>0$ almost surely and $\mathbb{E}\deg_G(o)<\infty$, and let $(\hat{G},\hat{o})$ be its degree biased version. Then
\begin{equation}\label{Emff}\mathbb{E} \mathfrak{f}(\deg_{\hat{G}}(\hat{o}))=\frac{1}{\mathbb{E}\deg_G(o)}.\end{equation}
Moreover, the $\mathfrak{f}$-degree biased version of $(\hat{G},\hat{o})$ has the same distribution as $(G,o)$.
\end{lemma}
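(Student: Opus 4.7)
The plan is to verify both claims by direct computation from the defining property of $f$-degree biasing. The key tool is the following change-of-measure identity: for any $g:\mathbb{Z}_{\ge 0}\to [0,\infty)$,
\[\mathbb{E}\,g(\deg_{\hat{G}}(\hat{o}))=\frac{\mathbb{E}\bigl[\deg_G(o)\,g(\deg_G(o))\bigr]}{\mathbb{E}\deg_G(o)},\]
which is a one-line consequence of the definition of the degree-biased measure (test on indicators of events of the form $\{\deg=i\}$, which by definition have probabilities $i\,\mathbb{P}(\deg_G(o)=i)/\mathbb{E}\deg_G(o)$).

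For \eqref{Emff}, I would apply this identity with $g=\mathfrak{f}$. Since $\deg_G(o)\,\mathfrak{f}(\deg_G(o))=\mathbbm{1}(\deg_G(o)>0)$, and by hypothesis $\deg_G(o)>0$ almost surely, the numerator equals $1$, giving $\mathbb{E}\mathfrak{f}(\deg_{\hat{G}}(\hat{o}))=1/\mathbb{E}\deg_G(o)$. Note this also confirms that $0<\mathbb{E}\mathfrak{f}(\deg_{\hat{G}}(\hat{o}))<\infty$, so the $\mathfrak{f}$-degree biased version of $(\hat{G},\hat{o})$ is well-defined.

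For the second statement, let $(\tilde{G},\tilde{o})$ denote the $\mathfrak{f}$-degree biased version of $(\hat{G},\hat{o})$. Fix $r\ge 1$ and a deterministic rooted graph $(H,p)$ of depth at most $r$. Chaining the two biasing definitions,
\begin{align*}
\mathbb{P}(B_r(\tilde{G},\tilde{o})\cong (H,p))
&=\frac{\mathfrak{f}(\deg_H(p))}{\mathbb{E}\mathfrak{f}(\deg_{\hat{G}}(\hat{o}))}\,\mathbb{P}(B_r(\hat{G},\hat{o})\cong (H,p))\\
&=\mathbb{E}\deg_G(o)\cdot\mathfrak{f}(\deg_H(p))\cdot\frac{\deg_H(p)}{\mathbb{E}\deg_G(o)}\,\mathbb{P}(B_r(G,o)\cong (H,p))\\
&=\mathbbm{1}(\deg_H(p)>0)\,\mathbb{P}(B_r(G,o)\cong (H,p)).
\end{align*}
The only subtlety is to argue that the indicator $\mathbbm{1}(\deg_H(p)>0)$ may be dropped: whenever $\mathbb{P}(B_r(G,o)\cong (H,p))>0$ for some $r\ge 1$, the root of $H$ has the same degree as $o$ in $G$ (since the $1$-neighborhood of the root sits inside $B_r$), and this is positive almost surely by hypothesis. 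Hence $\mathbb{P}(B_r(\tilde{G},\tilde{o})\cong (H,p))=\mathbb{P}(B_r(G,o)\cong (H,p))$ for all such $(H,p)$, which uniquely determines the law of a random rooted graph, so $(\tilde{G},\tilde{o})\stackrel{d}{=}(G,o)$.

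There is no real obstacle here; the lemma is essentially a bookkeeping check that $\mathfrak{f}$ is the correct inverse weighting for the degree bias. The only point requiring care is remembering to use the standing assumption $\deg_G(o)>0$ a.s., both to evaluate the expectation in \eqref{Emff} and to eliminate the indicator in the final display.
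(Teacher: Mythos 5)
Your proof is correct and follows essentially the same route as the paper: both compute \eqref{Emff} directly from the definition of degree biasing (using that $\deg\cdot\mathfrak{f}(\deg)=\mathbbm{1}(\deg>0)=1$ a.s.), and both verify the second claim by chaining the two biasing formulas on neighborhood probabilities, with the paper simply restricting to rooted graphs $(H,p)$ whose root has positive degree where you instead argue the indicator can be dropped. No gaps.
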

\begin{proof}
    We have
    \[\mathbb{E} \mathfrak{f}(\deg_{\hat{G}}(\hat{o}))=\sum_{i=1}^\infty \frac{\mathbb{P}(\deg_{\hat{G}}(\hat{o})=i)}{i}=\sum_{i=1}^\infty \frac{\mathbb{P}(\deg_{{G}}({o})=i)}{\mathbb{E}\deg_G(o)}=\frac{1}{\mathbb{E}\deg_G(o)}.\]

    To prove the second statement, let $(H,p)$ be a deterministic rooted graph of depth at most $r\ge 1$ such that $p$ has positive degree. Using \eqref{Emff} and the definitions, we see that
    \begin{align*}
        \mathbb{P}(B_r((\hat{G})^\mathfrak{f},(\hat{o})^\mathfrak{f})\cong (H,p))&=\frac{1}{\deg_H(p)\mathbb{E} \mathfrak{f}(\deg_{\hat{G}}(\hat{o}))}\mathbb{P}(B_r(\hat{G},\hat{o})=(H,p))\\&=\frac{\mathbb{E}\deg_G(o)}{\deg_H(p)}\mathbb{P}(B_r(\hat{G},\hat{o})=(H,p))\\&=\mathbb{P}(B_r({G},{o})=(H,p)).\end{align*}
  Thus, the second statement follows.  
\end{proof}

\begin{lemma}\label{bias3}
    Let $(G,o)$ and $(G_n,o_n)$  be random rooted graphs such that $\deg_G(o)>0$ and $\deg_{G_n}(o_n)>0$ almost surely, and their root has finite expected degree. Assume that $(\hat{G}_n,\hat{o}_n)$ converges locally to $(\hat{G},\hat{o})$. Then $(G_n,o_n)$ converges locally to $(G,o)$. Moreover,
    \[\lim_{n\to\infty} \mathbb{E}\deg_{G_n}(o_n)=\mathbb{E}\deg_{G}(o).\]
\end{lemma}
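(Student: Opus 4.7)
The plan is to deduce this lemma from the previous two lemmas by recognizing that $\mathfrak{f}$-degree biasing inverts the operation of degree biasing.

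First I would apply Lemma~\ref{bias2} to both $(G,o)$ and each $(G_n,o_n)$. This is legitimate because by hypothesis $\deg_G(o)>0$ and $\deg_{G_n}(o_n)>0$ almost surely, and both roots have finite expected degree. The lemma gives, on the one hand, the identities
\[\mathbb{E}\mathfrak{f}(\deg_{\hat{G}}(\hat{o}))=\frac{1}{\mathbb{E}\deg_G(o)},\qquad \mathbb{E}\mathfrak{f}(\deg_{\hat{G}_n}(\hat{o}_n))=\frac{1}{\mathbb{E}\deg_{G_n}(o_n)},\]
and on the other hand the fact that the $\mathfrak{f}$-degree biased version of $(\hat{G},\hat{o})$ has the same law as $(G,o)$, and similarly $(\hat{G}_n^{\mathfrak{f}},\hat{o}_n^{\mathfrak{f}})$ has the same law as $(G_n,o_n)$.

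Next I would feed $(\hat{G}_n,\hat{o}_n)\to(\hat{G},\hat{o})$ into Lemma~\ref{bias1}(b) with $f=\mathfrak{f}$; this is permitted because $\mathfrak{f}\le 1$ is bounded. The conclusion of that lemma is precisely $(\hat{G}_n^{\mathfrak{f}},\hat{o}_n^{\mathfrak{f}})\to(\hat{G}^{\mathfrak{f}},\hat{o}^{\mathfrak{f}})$ locally, and rewriting both sides via the second part of Lemma~\ref{bias2} yields $(G_n,o_n)\to(G,o)$ locally. This handles the first assertion.

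For the convergence of expected degrees, observe that the map $(H,p)\mapsto \mathfrak{f}(\deg_H(p))$ depends only on $B_1(H,p)$, so it is a bounded \emph{continuous} function on the space of rooted isomorphism classes. Hence local convergence of $(\hat{G}_n,\hat{o}_n)$ to $(\hat{G},\hat{o})$ directly gives
\[\lim_{n\to\infty}\mathbb{E}\mathfrak{f}(\deg_{\hat{G}_n}(\hat{o}_n))=\mathbb{E}\mathfrak{f}(\deg_{\hat{G}}(\hat{o})).\]
Combining this with the two identities from Lemma~\ref{bias2} yields $\lim_{n\to\infty}1/\mathbb{E}\deg_{G_n}(o_n)=1/\mathbb{E}\deg_G(o)$, and inverting (using $0<\mathbb{E}\deg_G(o)<\infty$) gives the desired $\lim_{n\to\infty}\mathbb{E}\deg_{G_n}(o_n)=\mathbb{E}\deg_G(o)$.

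There is no genuine obstacle here: everything reduces to combining the duality between degree biasing and $\mathfrak{f}$-biasing (Lemma~\ref{bias2}) with the continuity of bounded functionals of the root's degree under local convergence (Lemma~\ref{bias1}(b)). The only small point to watch is that we need $\mathbb{E}\deg_G(o)>0$ to invert the limit of reciprocals, but this is immediate from the standing assumption $\deg_G(o)>0$ almost surely.
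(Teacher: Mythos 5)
Your proposal is correct and follows essentially the same route as the paper: the first assertion is obtained by applying Lemma~\ref{bias1}(b) with the bounded function $\mathfrak{f}$ together with the inversion statement of Lemma~\ref{bias2}, and the second assertion by passing to the limit in $\mathbb{E}\mathfrak{f}(\deg_{\hat{G}_n}(\hat{o}_n))$ and using the identity \eqref{Emff} to invert. Nothing further is needed.
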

\begin{proof}
The first part follows from part (b) of Lemma~\ref{bias1} and Lemma~\ref{bias2}. To prove the second part, note that by \eqref{Emff}, we have
\[\lim_{n\to\infty} \mathbb{E}\deg_{G_n}(o_n)=\lim_{n\to\infty}\frac{1}{\mathbb{E}\mathfrak{f}(\deg_{\hat{G}_n}(\hat{o}_n))}=\frac{1}{\mathbb{E}\mathfrak{f}(\deg_{\hat{G}}(\hat{o}))}=\mathbb{E}\deg_G(o).\qedhere\]
\end{proof}

Let $\mu=(\mu_i)$ and $\nu=(\nu_i)$ be two distributions on the set of nonnegative integers with finite second moments such that $\bar{\mu}>0$ and $\bar{\nu}>0$.

Let $(G_1,o_1)$ be a $\GW_*(\mu,\nu)$-tree, and let $(G_2,o_2)$ be a $\GW_*(\nu,\mu)$-tree. Let $(\hat{G}_1,\hat{o}_1)$  and $(\hat{G}_2,\hat{o}_2)$ be their degree biased versions. 

\begin{lemma}\label{munuswitch}
Let $\hat{o}'$ be a uniform random neighbor of $\hat{o}_1$ in the graph $\hat{G}_1$. Then $(\hat{G}_1,\hat{o}')$ has the same distribution as $(\hat{G}_2,\hat{o}_2)$. 

\end{lemma}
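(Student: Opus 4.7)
The plan is to give an explicit Galton--Watson-type description of both degree-biased trees $(\hat G_1,\hat o_1)$ and $(\hat G_2,\hat o_2)$, and then verify directly that re-rooting $\hat G_1$ at a uniform random neighbor of $\hat o_1$ transforms the first description into the second.

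First I would establish the following explicit description of $(\hat G_1,\hat o_1)$: the root $\hat o_1$ has $N_0$ children with $\mathbb{P}(N_0=i)=i\mu_i/\bar\mu$ (the $\mu$-size-biased law), and every vertex at distance $\ell\ge 1$ from $\hat o_1$ independently has offspring distributed as $\nu'$ if $\ell$ is odd and as $\mu'$ if $\ell$ is even and positive, exactly as in $\GW_*(\mu,\nu)$. This follows by comparing the probability assigned by this description to any deterministic rooted tree $(T,p)$ of depth at most $r$ with the probability under $\GW_*(\mu,\nu)$: the ratio is $d_0\mu_{d_0}/(\bar\mu\cdot\mu_{d_0})=\deg_T(p)/\bar\mu$, which is exactly the Radon--Nikodym factor defining the degree-biased version (using $\mathbb{E}\deg_{G_1}(o_1)=\bar\mu$). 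An analogous description holds for $(\hat G_2,\hat o_2)$ with the roles of $\mu,\nu$ swapped.

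Next I would re-root at a uniform random neighbor $\hat o'$ of $\hat o_1$. Two elementary size-biasing identities do most of the work. Conditional on $N_0=i$, the vertex $\hat o'$ is uniform among the $i$ children, so the number of remaining children of $\hat o_1$ (which become $\hat o_1$'s children in the re-rooted tree) has law $\mathbb{P}(N_0-1=j)=(j+1)\mu_{j+1}/\bar\mu=\mu'_j$. Meanwhile, in the re-rooted tree, $\hat o'$ has $N_1+1$ children, where $N_1\sim\nu'$ was its original offspring count; a direct calculation gives $\mathbb{P}(N_1+1=j)=j\nu_j/\bar\nu$, the $\nu$-size-biased law, which is precisely the degree distribution of the root of $\hat G_2$.

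Finally I would match the remaining subtree structure by a parity check. A vertex $v$ at distance $d\ge 2$ from $\hat o'$ lies either in the original subtree of $\hat o'$ in $\hat G_1$ (in which case its old distance from $\hat o_1$ was $d+1$) or in the subtree on the other side of $\hat o_1$ (in which case its old distance was $d-1$). In either case one verifies that the parity is such that the offspring distribution of $v$ ($\nu'$ or $\mu'$) prescribed by the description of $(\hat G_1,\hat o_1)$ agrees with the one prescribed for distance $d$ in the explicit description of $(\hat G_2,\hat o_2)$. This gives equality of the joint laws of all finite neighborhoods of the new root, proving the lemma. The main obstacle is precisely this bookkeeping of distance parities together with the size-biasing identity for the parent $\hat o_1$; once these are in hand, everything else is routine.
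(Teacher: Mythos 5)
Your proposal is correct and is essentially the paper's argument: the paper packages the very same size-biasing computation by joining independent $\GW(\mu',\nu')$- and $\GW(\nu',\mu')$-trees with an edge into a bi-rooted tree $(H,p_1,p_2)$ and identifying it with $(\hat G_1,\hat o_1,\hat o')$, which is exactly your decomposition of the biased tree across the edge $\hat o_1\hat o'$ (the remaining children of $\hat o_1$ distributed as $\mu'$, the subtree of $\hat o'$ a $\GW(\nu',\mu')$-tree, the new root degree size-biased $\nu$). Your explicit re-rooting and parity bookkeeping is just the unpacked version of the paper's one-line observation.
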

\begin{proof}
Let $(H_1,p_1)$ be a $\GW(\mu',\nu')$-tree, and let 
$(H_2,p_2)$ be a $\GW(\nu',\mu')$-tree such that $(H_1,p_1)$ and $(H_2,p_2)$ are independent. Connect $p_1$ and $p_2$ with an edge to combine $H_1$ and $H_2$ into a single tree $H$. The statement follows by observing that the biroted tree $(\hat{G}_1,\hat{o}_1,\hat{o}')$ has the same distribution as $(H,p_1,p_2)$.
\end{proof}

\begin{lemma}\label{specreg}
Let $G_n$ be a sequence of finite bipartite graphs with color classes $C_n$ and $R_n$ such that $(G_n,C_n)$ converges to a $\GW_*(\mu,\nu)$-tree. Assume that $\nu_0=0$. Then we can choose $C_n'\subset C_n$ with the following properties. Let $G_n'$ be the graph obtained from $G_n$ by first deleting the vertices in $C_n\setminus C_n'$, then deleting all the vertices of $R_n$ which are isolated in this new graph. Let $R_n'=V(G_n')\setminus C_n'$. Then
\begin{enumerate}[(i)]
    \item $\lim_{n\to\infty}\frac{|C_n'|}{|C_n|}=1$;
    \item $(G_n',C_n')$ converges to a $\GW_*(\mu,\nu)$-tree;
    \item \[\lim_{n\to\infty}\overline{\deg}_{G_n'}(C_n')=\bar{\mu};\]
    \item $(G_n',R_n')$ converges to a $\GW_*(\nu,\mu)$-tree;
    \item \[\lim_{n\to\infty}\overline{\deg}_{G_n'}(R_n')=\bar{\nu}.\]
\end{enumerate}
\end{lemma}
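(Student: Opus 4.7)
The plan is to split the five conditions into two groups: apply Lemma~\ref{degreg} to handle (i)--(iii) directly, then transfer information to the $R$-side via the degree-biasing calculus of Lemmas~\ref{bias1}--\ref{bias3} combined with Lemma~\ref{munuswitch}. First, I would apply Lemma~\ref{degreg} to the set-rooted sequence $(G_n,C_n)$, whose local weak limit $(G_1,o_1)$ is a $\GW_*(\mu,\nu)$-tree with $\mathbb{E}\deg_{G_1}(o_1)=\bar{\mu}<\infty$. This yields $C_n'\subset C_n$ with $|C_n'|/|C_n|\to 1$ such that, if $G_n^{(1)}$ denotes the graph obtained from $G_n$ by deleting $C_n\setminus C_n'$, then $(G_n^{(1)},C_n')\to(G_1,o_1)$ locally and $\overline{\deg}_{G_n^{(1)}}(C_n')\to\bar{\mu}$. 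Since further deleting the now-isolated vertices of $R_n$ from $G_n^{(1)}$ to form $G_n'$ does not affect any neighborhood of a vertex of $C_n'$, conditions (i), (ii), and (iii) hold for $G_n'$.

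For (iv) and (v), I would relate $(G_n',R_n')$ to $(G_n',C_n')$ through their degree-biased versions. Because $\overline{\deg}_{G_n'}(C_n')\to\bar{\mu}=\mathbb{E}\deg_{G_1}(o_1)$, Lemma~\ref{bias1}(a) applied with $f(i)=i$ shows that the degree-biased sequence $(\widehat{G_n'},\widehat{c}_n)$, with root $\widehat{c}_n\in C_n'$ drawn proportional to its degree, converges locally to $(\hat{G}_1,\hat{o}_1)$. Let $\widehat{r}_n$ be a uniformly chosen neighbor of $\widehat{c}_n$. Since picking a uniform neighbor is a continuous functional on rooted graphs whose root has positive degree (and $\hat{o}_1$ has positive degree almost surely by construction), Lemma~\ref{munuswitch} implies that $(G_n',\widehat{r}_n)$ converges locally to $(\hat{G}_2,\hat{o}_2)$, the degree-biased $\GW_*(\nu,\mu)$-tree. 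On the other hand, the joint sampling "$\widehat{c}_n$ degree-biased in $C_n'$, then $\widehat{r}_n$ uniform neighbor" is, by the bipartiteness of $G_n'$, exactly the uniform distribution on oriented edges of $G_n'$ pointing from $C_n'$ to $R_n'$; hence $\widehat{r}_n$ is a degree-biased element of $R_n'$, and $(G_n',\widehat{r}_n)$ has the same law as the degree-biased version of $(G_n',R_n')$.

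Assembling these, the degree-biased version of $(G_n',R_n')$ converges locally to $(\hat{G}_2,\hat{o}_2)$. Since the assumption $\nu_0=0$ forces $\deg_{G_2}(o_2)>0$ almost surely, and every $r\in R_n'$ has positive degree in $G_n'$ by construction, Lemma~\ref{bias3} then gives both that $(G_n',R_n')$ converges locally to $(G_2,o_2)$, which is (iv), and that $\overline{\deg}_{G_n'}(R_n')\to\mathbb{E}\deg_{G_2}(o_2)=\bar{\nu}$, which is (v). The main subtle point I expect is verifying rigorously the bipartite counting identity that matches "uniform neighbor of a $C$-degree-biased root" with "degree-biased $R$-root", and verifying that the "uniform neighbor" map is continuous for local weak convergence on rooted graphs with positive root degree --- both are essentially routine, and the hypothesis $\nu_0=0$ is exactly what enables the final unbiasing step via Lemma~\ref{bias3}.
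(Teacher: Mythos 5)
Your proposal is correct and follows essentially the same route as the paper: Lemma~\ref{degreg} yields (i)--(iii), and then the chain ``degree-bias the $C$-rooted sequence via Lemma~\ref{bias1}, pass to a uniform neighbor and identify the limit through Lemma~\ref{munuswitch}, observe that this neighbor is exactly a degree-biased element of $R_n'$, and unbias with Lemma~\ref{bias3} using $\nu_0=0$'' is precisely the paper's argument for (iv) and (v).
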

 \begin{proof}
Using Lemma~\ref{degreg}, we can find $C_n'\subset C_n$ such that (i), (ii) and (iii) hold. We show that (iv) and (v) also hold. 

Let $c_n'$ be a uniform random element of $C_n'$. Let $(\hat{G}_n',\hat{c}_n')$ be the degree biased version of $(G_n',c_n')$. It follows from (ii), (iii) and Lemma~\ref{bias1} that $(\hat{G}_n',\hat{c}_n')$ converge weakly to the a degree biased $\GW_*(\mu,\nu)$-tree. Let $\hat{r}_n'$ be a uniform random neighbor of $\hat{c}_n'$. It follows from Lemma~\ref{munuswitch} that $(\hat{G}_n',\hat{r}_n')$ converges weakly to a degree biased $\GW_*(\nu,\mu)$-tree. Let $r_n$ be a uniform random element of $R_n'$. Let $(\hat{G}_n',\hat{r}_n)$ be the degree biased version of $(G',r_n)$. Then $(\hat{G}_n',\hat{r}_n)$ and $(\hat{G}_n,\hat{r}_n')$ have the same distribution. So $(\hat{G}_n,\hat{r}_n)$ converges weakly to a degree biased $\GW_*(\nu,\mu)$-tree. Thus, (iv) and (v) follow using Lemma~\ref{bias3}.
\end{proof}

Now we are ready to prove Lemma~\ref{LemmaReg}.

    We first prove the special case when $\nu_0=0$. By Lemma~\ref{specreg}, we can find  $C_n'\subset C_n$ and $R_n'\subset R_n$ such that for the submatrix $M_n'$ of $M_n$ determined by the rows in $R_n'$ and the columns in $C_n'$ is $(\mu,\nu)$-regular and 
    \[\lim_{n\to\infty}\frac{|C_n'|}{|C_n|}=1.\]

    Note that $|\rank(M_n)-\rank(M_n')|\le |C_n|-|C_n'|$. Thus,
    \begin{align*}\left|\frac{\rank(M_n)}{|C_n|}-\frac{\rank(M_n')}{|C_n'|}\right|&\le \left|\frac{\rank(M_n)-\rank(M_n')}{|C_n|}\right|+\frac{\rank(M_n')}{|C_n|}\left|1-\frac{|C|'}{|C|}\right|\\&\le \frac{|C_n|-|C_n'|}{|C_n|}+\left|1-\frac{|C_n|'}{|C_n|}\right|.\end{align*}
    Thus,
    \[\lim_{n\to\infty}\left|\frac{\rank(M_n)}{|C_n|}-\frac{\rank(M_n')}{|C_n'|}\right|=0.\]
    If $\nu_0>0$, then let $\nu^c=(\nu_i^c)$ be defined as
    \[\nu_i^c=\begin{cases} 0&\text{for }i=0,\\
    \frac{\nu_i}{1-\nu_0}&\text{for }i>0.\end{cases}\]
    Then apply the argument above with $\nu^c$ in place of $\nu$. It is easy to see that if we add an appropriate number of all zero rows to $M_n'$, then we can achieve all the required properties.

\subsection{A lower bound on the rank by spectral methods}

The goal of this section is to prove the following lemma.

\begin{lemma}\label{ranklemma1}
    Let $M_n$ be a deterministic $(\mu,\nu)$-regular sequence of matrices. Then
    \[\liminf_{n\to\infty} \frac{\rank(M_n)}{|C_n|} \ge 1-\max_{t\in [0,1]} \Lambda(t).\]
\end{lemma}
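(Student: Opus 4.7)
The plan is to follow the spectral/resolvent technique of Bordenave, Lelarge and Salez, adapted to the asymmetric $(\mu \neq \nu)$ bipartite setting. I would first symmetrize by forming
\[A_n=\begin{pmatrix} 0 & M_n \\ M_n^T & 0 \end{pmatrix},\]
so that $\dim\ker A_n = |R_n|+|C_n|-2\rank M_n$. Lower bounding $\rank M_n$ is therefore equivalent to upper bounding the mass of the empirical spectral measure of $A_n$ at $0$, which I would access through the regularized log-determinant
\[L_n(\varepsilon)=\frac{1}{|C_n|+|R_n|}\log\det(A_n^2+\varepsilon I).\]
As $\varepsilon\to 0^+$, the coefficient of $\log\varepsilon$ in $L_n(\varepsilon)$ is precisely the relative multiplicity of the eigenvalue $0$, so an asymptotic upper bound on that coefficient bounds the kernel size.

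Using $(\mu,\nu)$-regularity together with the local convergence of $(G_n,C_n)$ and $(G_n,R_n)$ to the two-type $\GW_*$ trees (items (ii)--(v) of the definition), I would pass to the local limit. For each fixed $\varepsilon>0$, standard resolvent identities on sparse matrices express $L_n(\varepsilon)$ as an average of logarithms of the diagonal entries of $(A_n^2+\varepsilon I)^{-1}$, and regularity delivers the uniform integrability needed to push this through local weak convergence. This would yield $L_n(\varepsilon)\to L_\infty(\varepsilon)$ for a deterministic $L_\infty(\varepsilon)$ computed as the expected log-resolvent at the root of the limit tree. On that tree, the root resolvent satisfies a recursive distributional equation driven by $\mu'$ and $\nu'$; tracking its $\varepsilon\to 0^+$ expansion shows that every accumulation point corresponds to a scalar $t\in[0,1]$ solving $t=f(\mu',1-f(\nu',1-t))$, with the coefficient of $\log\varepsilon$ in $L_\infty(\varepsilon)$ equal to $\Lambda(t)$. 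Since $\Lambda(t)\le \max_s\Lambda(s)$, renormalizing using $\overline{\deg}_{G_n}(C_n)\to\bar\mu$ and $\overline{\deg}_{G_n}(R_n)\to\bar\nu$ gives the claimed bound $\liminf_n \rank(M_n)/|C_n|\ge 1-\max_t\Lambda(t)$.

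The main obstacle I anticipate is controlling the $\varepsilon\to 0^+$ limit of the coupled RDEs on the two-type tree and, in particular, justifying the exchange of $\lim_n$ and $\lim_{\varepsilon\to 0^+}$. In the single-type setting of Bordenave--Lelarge--Salez, monotonicity in $\varepsilon$ and a maximum-principle argument identify a canonical fixed point; in our two-type setting one must track two coupled random resolvents, one per color class, so the monotonicity has to be established separately on each and then coupled back through $\mu'$ and $\nu'$. The $(\mu,\nu)$-regularity hypothesis is precisely what supplies the operator-norm and degree-tail bounds that make this analysis uniform in $n$, and it is also what lets us treat the two sides of the Tanner graph symmetrically even when the underlying matrix is not.
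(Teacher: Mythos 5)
Your symmetrization step and the idea of passing to the $\GW_*(\mu,\nu)$ local limit are in the right spirit, but the route you choose through the regularized log-determinant $L_n(\varepsilon)=\frac{1}{|C_n|+|R_n|}\log\det(A_n^2+\varepsilon I)$ has a genuine gap exactly where you flag it, and that gap is the whole problem rather than a technical afterthought. Extracting the coefficient of $\log\varepsilon$ requires interchanging $n\to\infty$ with $\varepsilon\to 0^+$, i.e.\ showing that no macroscopic amount of spectral mass sits at nonzero eigenvalues of order $o(1)$; nothing in your sketch supplies this, and the tools you invoke do not exist in this setting. In particular, $(\mu,\nu)$-regularity does \emph{not} give operator-norm bounds (the limit degrees are Poisson-type, so maximum degrees diverge), it does not give uniform integrability of the degree powers needed to prove $L_n(\varepsilon)\to L_\infty(\varepsilon)$ from local convergence alone (local weak convergence controls diagonal resolvent entries at fixed $\varepsilon$, but convergence of the log-determinant needs additional moment control), and the claimed $\varepsilon\to 0^+$ expansion of $L_\infty$ with coefficient $\Lambda(t)$ at a solution of $t=f(\mu',1-f(\nu',1-t))$ is asserted rather than derived -- on the two-type tree this is precisely the recursion/expectation computation that has to be carried out, and it nowhere appears in your argument.

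The paper's proof shows that for this direction (lower bound on rank, i.e.\ upper bound on the kernel) no $\varepsilon$-regularization is needed at all, which is why the interchange-of-limits obstacle never arises. One writes $\dim\ker M_n=\sum_{o_n\in C_n}\eta_{(F_n,o_n)}(\{0\})$, where $\eta_{(F_n,o_n)}$ is the local spectral measure of $F_n=\bigl(\begin{smallmatrix}0&M_n\\ M_n^T&0\end{smallmatrix}\bigr)$ at a column vertex (this also avoids mixing $\ker M_n$ with $\ker M_n^T$, which your global determinant does). After a diagonal $\pm 1$ gauge change the matrix agrees with the adjacency matrix on a large ball, so by local convergence the spectral measures converge weakly to that of the limiting tree, and since $\{0\}$ is closed, the portmanteau theorem gives the one-sided bound $\limsup_n\eta_{(F_n,o_n)}(\{0\})\le\eta_{(T,o)}(\{0\})$ -- this semicontinuity is the easy direction and is all that is needed. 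Reverse Fatou (the quantities are bounded by $1$) then reduces the lemma to showing $\mathbb{E}\,\eta_{(T,o)}(\{0\})\le\max_{t\in[0,1]}\Lambda(t)$ on the $\GW_*(\mu,\nu)$-tree, which is done via the recursion for the atom at zero (the analogue of Bordenave--Lelarge--Salez, Propositions 6--7) and an explicit computation showing $\mathbb{E}\,\eta_{(T,o)}(\{0\})=\Lambda(t)$ with $t=\mathbb{P}(X>0)$ for the root atom $X$ of the $\GW(\mu',\nu')$-tree. To repair your write-up you would either have to supply the missing uniform-in-$n$ control near zero (essentially as hard as the theorem itself), or switch to this semicontinuity argument and then actually carry out the two-type RDE computation of the atom, which your proposal leaves untouched.
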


Given a locally finite graph $G$ on the vertex set $V$, let $A_G$ be the adjacency operator of $G$, which is densely-defined on the
subset of finitely supported functions of the Hilbert space $\ell^2(V)$ by the equation that for all $u,v\in V$, we have
\[\langle A_G e_v,e_u\rangle =\begin{cases}
1&\text{if $uv$ is an edge of $G$,}\\
0&\text{otherwise,}
\end{cases}\]
where $e_v$ is the characteristic vector of $v$.

Assuming that $A_G$ is essentially self-adjoint, one can define $f(A_G)$ for any measurable function $f:\mathbb{R}\to\mathbb{C}$. Moreover, by the Spectral theorem, given any $o\in V$, there is a unique probability
measure $\eta_{(G,o)}$ such that
\[\langle f(A_G)e_o,e_o \rangle=\int f(t) \eta_{(G,o)}(dt).\]

Let $(T,o)$ be a rooted tree. Given a vertex $v$ of $T$, let $D(v)$ be the set of children of $v$, and let $T_v$ be the subtree of $T$ induced by $v$ and all the descendants of $v$. 

Combining Proposition 6 and Proposition 7 of \cite{bordenave2011rank}, we obtain the following lemma. (Note that in the statement of \cite[Proposition 7]{bordenave2011rank}, it is assumed that $\mu=\nu$, but the argument of the proof can be easily extended to $\GW_*(\mu,\nu)$-trees.)
\begin{lemma}\label{lemmarecursion}Let $(T,o)$ be a $\GW_*(\mu,\nu)$-tree or a $\GW(\mu',\nu')$-tree. Almost surely
\begin{enumerate}[(i)]
    \item $A_T$ is essentially self-adjoint, and for all $v\in V(T)$, $A_{T_v}$ is essentially self-adjoint.
    \item With the notation \[x_v=\eta_{(T_v,v)}(\{0\}),\]
    we have
    \[x_o=\left(1+\sum_{i\in D(o)}\left(\sum_{j\in D(i)}x_j\right)^{-1}\right)^{-1}.\]
    Here we use the convention that $1/0=\infty$ and $1/\infty=0$.
\end{enumerate}

\end{lemma}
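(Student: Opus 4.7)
The lemma consists of two parts: essential self-adjointness of $A_T$ and $A_{T_v}$ (part (i)) and a two-level recursion for the atom $x_v=\eta_{(T_v,v)}(\{0\})$ (part (ii)). For (i), $A_T$ is a symmetric operator on the dense subspace $c_{00}(V(T))$ of finitely supported functions in $\ell^2(V(T))$. I would establish essential self-adjointness by showing that its deficiency subspaces are trivial, i.e., that almost surely no nonzero $\phi \in \ell^2(V(T))$ satisfies $A_T\phi = i\phi$ pointwise. One workable approach is to introduce the truncations $T^{(n)} = B_n(T,o)$, whose adjacency operators are finite matrices and hence trivially self-adjoint; the Stieltjes transforms $G_v^{(n)}(z)$ on $T^{(n)}$ satisfy a finite version of the Schur recursion and converge locally as $n\to\infty$ to a Herglotz fixed point of the Schur recursion. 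The unicity of this fixed point in a suitable class then forces any two self-adjoint extensions of $A_T$ to agree, yielding essential self-adjointness. The same argument, restricted to $T_v$, handles $A_{T_v}$ for each $v$; note that, conditional on the parity of the depth of $v$ in $T$, $(T_v,v)$ is again distributionally a $\GW_*$-tree from the same family, so no new ingredients are required.

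For (ii), define the Stieltjes transform $G_v(z)=\langle(A_{T_v}-z)^{-1}e_v,e_v\rangle$ for $z\in\mathbb{C}\setminus\mathbb{R}$, which is well-defined thanks to (i). Splitting $V(T_v)=\{v\}\sqcup\bigsqcup_{i\in D(v)} V(T_i)$ and applying the Schur complement formula to the corresponding block decomposition of $A_{T_v}-z$ yields
\[ G_v(z) = \frac{1}{-z-\sum_{i\in D(v)}G_i(z)}, \]
where the sum over the almost surely finite set $D(v)$ converges. Writing out the spectral representation $G_v(i\varepsilon)=\int(\lambda-i\varepsilon)^{-1}d\eta_{(T_v,v)}(\lambda)$ and applying dominated convergence to the integrand $(-i\varepsilon)/(\lambda-i\varepsilon)$, which is uniformly bounded by $1$ in modulus and tends pointwise to the indicator of $\{0\}$ as $\varepsilon\to 0^+$, one obtains the identification $x_v=\lim_{\varepsilon\to 0^+}(-i\varepsilon)G_v(i\varepsilon)$.

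The double recursion for $x_o$ emerges from iterating the Schur identity once more. Multiplying the identity at $o$ by $-i\varepsilon$ gives
\[ (-i\varepsilon)G_o(i\varepsilon) = \frac{1}{1+\sum_{i\in D(o)} G_i(i\varepsilon)/(i\varepsilon)}, \]
and substituting the Schur identity at each child $i$ into the inner fraction yields
\[ \frac{G_i(i\varepsilon)}{i\varepsilon} = \frac{1}{\varepsilon^2+\sum_{j\in D(i)}(-i\varepsilon)G_j(i\varepsilon)}. \]
As $\varepsilon\to 0^+$, each $(-i\varepsilon)G_j(i\varepsilon)\to x_j\in[0,1]$ and $\varepsilon^2\to 0$, and because all sums involved are finite and every term $(-i\varepsilon)G_v(i\varepsilon)$ is uniformly bounded by $1$, the limits commute with the sums. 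With the conventions $1/0=\infty$ and $1/\infty=0$ the resulting limits produce exactly the recursion in the statement.

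The main obstacle is part (i): since $\mu$ and $\nu$ are only assumed to have finite second moments, vertex degrees in $T$ are unbounded and essential self-adjointness is not automatic. The cleanest route — tightness of the truncated resolvents together with uniqueness of the limiting Herglotz fixed point — relies on a delicate distributional fixed-point analysis specific to the branching structure of the tree. Once (i) is in hand, part (ii) is a straightforward two-step Schur computation combined with standard boundary behavior of Herglotz transforms.
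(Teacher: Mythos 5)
The paper does not prove this lemma from scratch: it is obtained by citing Propositions 6 and 7 of Bordenave--Lelarge--Salez \cite{bordenave2011rank}, together with the remark that their argument (stated for $\mu=\nu$) extends to $\GW_*(\mu,\nu)$-trees. Your part (ii) is essentially the argument behind their Proposition 7: the Schur-complement recursion for $G_v(z)$, the identification $x_v=\lim_{\varepsilon\to 0^+}(-i\varepsilon)G_v(i\varepsilon)$, and a two-step passage to the limit. This is sound in outline, with one point to tighten: the terms of the outer sum are $H_i(\varepsilon)=G_i(i\varepsilon)/(i\varepsilon)$, which are \emph{not} bounded by $1$, so ``the limits commute with the sums'' is not justified as stated when some $\sum_{j\in D(i)}x_j=0$ and $|H_i(\varepsilon)|\to\infty$; a priori, large imaginary parts of different children could cancel. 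This is easily repaired: since $T_i$ is bipartite, $\eta_{(T_i,i)}$ is symmetric, hence $H_i(\varepsilon)=\int(\lambda^2+\varepsilon^2)^{-1}\,\eta_{(T_i,i)}(d\lambda)$ is real, nonnegative and monotone as $\varepsilon\downarrow 0$, and the interchange follows from monotone convergence together with the stated conventions.

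The genuine gap is part (i). What you offer for essential self-adjointness is not a proof: triviality of the deficiency spaces does not follow from convergence of the truncated Stieltjes transforms to ``a Herglotz fixed point of the Schur recursion,'' nor from uniqueness of such a fixed point --- you give no argument for that uniqueness, and establishing it is at least as delicate as essential self-adjointness itself. On trees with unbounded degrees essential self-adjointness can genuinely fail (spherically symmetric trees with rapidly growing degrees have nonzero deficiency indices), so some quantitative use of the offspring distributions' moment assumptions is unavoidable; your sketch never uses them. You acknowledge this yourself, but the proposal stops exactly at the nontrivial point, and since part (ii) presupposes (i) (the measures $\eta_{(T_v,v)}$ are only defined once $A_{T_v}$ is essentially self-adjoint), the gap is not cosmetic. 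Either supply an actual criterion for essential self-adjointness of adjacency operators of trees that applies almost surely to Galton--Watson trees with finite mean offspring --- this is the content of \cite[Proposition 6]{bordenave2011rank} --- or do what the paper does: cite Propositions 6 and 7 of \cite{bordenave2011rank} and verify that their proofs carry over to the two-type setting.
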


\begin{lemma}\label{infinitetreebound}
    Let $(T,o)$ be a $\GW_*(\mu,\nu)$-tree. Then
    \[\mathbb{E}\eta_{(T,o)}(\{0\})\le \max_{t\in[0,1]}\Lambda(t).\]
\end{lemma}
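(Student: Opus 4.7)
The plan is to exploit the recursive structure provided by Lemma~\ref{lemmarecursion}(ii) to derive a distributional fixed-point equation for $X := x_o = \eta_{(T,o)}(\{0\})$, and then upper-bound $\mathbb{E} X$ by $\Lambda(t)$ for a cleverly chosen $t$. By the construction of the $\GW_*(\mu,\nu)$-tree, the root $o$ has a $\mu$-distributed number of children, each child at odd depth has a $\nu'$-distributed number of children, and each grandchild roots an independent $\GW(\mu',\nu')$-subtree. Denote by $Y$ the random variable $\eta_{(T',o')}(\{0\})$ for a $\GW(\mu',\nu')$-tree $(T',o')$. Then Lemma~\ref{lemmarecursion}(ii) yields the distributional identity
\[
X \stackrel{d}{=} \left(1 + \sum_{i=1}^{N}\left(\sum_{j=1}^{M_i} Y_{i,j}\right)^{-1}\right)^{-1},
\]
with $N \sim \mu$, $M_i \sim \nu'$ independent, and $Y_{i,j}$ i.i.d.\ copies of $Y$, under the conventions $1/0 = \infty$ and $1/\infty = 0$.

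Next, introduce the parameter $t := \mathbb{P}(Y > 0) \in [0,1]$. A direct generating-function calculation gives
\[
\mathbb{P}(X > 0) = \mathbb{P}\bigl(\forall i,\ \exists j:\ Y_{i,j} > 0\bigr) = f\bigl(\mu,\, 1 - f(\nu',\, 1-t)\bigr),
\]
which is precisely the first term of $\Lambda(t)$. Since $X \le 1$, the na\"ive inequality $\mathbb{E} X \le \mathbb{P}(X > 0)$ already recovers this term. The correction term in $\Lambda(t)$, namely $\tfrac{\bar{\mu}}{\bar{\nu}}\bigl(1 - f(\nu,\,1-t) - \bar{\nu} t f(\nu',1-t)\bigr)$, admits the pleasant interpretation $\tfrac{\bar{\mu}}{\bar{\nu}}\,\mathbb{P}(K \ge 2)$, where $K$ is the number of ``active'' elements (each active i.i.d.\ with probability $t$) in a $\nu$-distributed offspring set; this identity is verified by differentiating $f(\nu,1-t)$ and using $\bar{\nu} f(\nu',1-t) = f'(\nu,1-t)$. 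Thus the problem reduces to the ``slack bound''
\[
\mathbb{P}(X > 0) - \mathbb{E} X \;=\; \mathbb{E}\!\left[\tfrac{T}{1+T}\,\mathbbm{1}(X > 0)\right] \;\ge\; \tfrac{\bar{\mu}}{\bar{\nu}}\,\mathbb{P}(K \ge 2),
\]
where $T := \sum_i S_i^{-1}$ and $S_i := \sum_j Y_{i,j}$.

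The slack bound is the technical heart. I would follow the strategy of Bordenave--Lelarge--Salez adapted to the bipartite/asymmetric case via a size-biasing switch: in the Galton--Watson tree, a uniform $C$-type vertex (the root $o$) sees $\mu$-distributed offspring, but a uniform $R$-type vertex sees $\nu$-distributed offspring, and their asymptotic frequency ratio is precisely $\bar{\mu}/\bar{\nu}$. For each $R$-type vertex $r$ with two or more ``active'' $C$-type neighbors, one reads off an explicit contribution to $T/(1+T)$ using concavity of $u\mapsto u/(1+u)$ together with a Jensen- or Cauchy--Schwarz-type estimate, and summing these contributions produces exactly the $\tfrac{\bar{\mu}}{\bar{\nu}}\mathbb{P}(K\ge 2)$ lower bound. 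Once this is in place, $\mathbb{E}\eta_{(T,o)}(\{0\}) = \mathbb{E} X \le \Lambda(t) \le \max_{t' \in [0,1]}\Lambda(t')$, as required.

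The principal obstacle is producing the slack lower bound with the correct $\bar{\mu}/\bar{\nu}$ prefactor. The bookkeeping with $f(\mu,\cdot)$, $f(\nu,\cdot)$, and $f(\nu',\cdot)$ is routine, but the size-biasing switch in the asymmetric setting $\mu \ne \nu$ (as opposed to the $\mu = \nu$ case of BLS) requires genuine adaptation, and one must be careful that the concavity step does not discard the $\mathbbm{1}(K \ge 2)$ event.
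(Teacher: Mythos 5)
Your setup is correct and matches the paper's: the recursive distributional equation from Lemma~\ref{lemmarecursion}, the choice $t=\mathbb{P}(Y>0)$ for the $\GW(\mu',\nu')$ quantity $Y$, the identity $\mathbb{P}(X>0)=f(\mu,1-f(\nu',1-t))$, and the reduction of the lemma to the ``slack'' estimate
\[
\mathbb{P}(X>0)-\mathbb{E}X \;=\; \mathbb{E}\Bigl[\tfrac{T}{1+T}\,\mathbbm{1}(X>0)\Bigr]\;\ge\;\tfrac{\bar{\mu}}{\bar{\nu}}\bigl(1-f(\nu,1-t)-\bar{\nu}tf(\nu',1-t)\bigr)
\]
are all right. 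But the slack estimate is exactly the content of the lemma, and you do not prove it: you gesture at ``concavity of $u\mapsto u/(1+u)$ together with a Jensen- or Cauchy--Schwarz-type estimate'' plus a size-biasing switch, and you yourself flag this as the unresolved ``technical heart''. As stated, this route is doubtful: Jensen applied to the concave map $u\mapsto u/(1+u)$ gives an \emph{upper} bound on $\mathbb{E}[T/(1+T)]$, i.e.\ the wrong direction, and it is not clear how a concavity estimate would produce the exact prefactor $\bar{\mu}/\bar{\nu}$ or retain the ``at least two active neighbors'' event. So there is a genuine gap at the decisive step.

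For comparison, the paper's argument needs no inequality at this stage: the slack is computed \emph{exactly}, giving $\mathbb{E}\eta_{(T,o)}(\{0\})=\Lambda(t)$ for this particular $t$, and only the final passage to $\max_{t'\in[0,1]}\Lambda(t')$ is an inequality. The mechanism is: (1) a size-biased pull-out of one summand, turning $\mathbb{E}\sum_{j=1}^{N_*}\frac{S_j^{-1}}{1+\sum_{i=1}^{N_*}S_i^{-1}}$ (with $N_*\sim\mu$) into $\bar{\mu}$ times one term in which the remaining number of summands is $N\sim\mu'$; (2) the observation that
\[
\frac{S_0^{-1}}{1+S_0^{-1}+\sum_{i=1}^{N}S_i^{-1}}\,\mathbbm{1}(S_i>0\ \forall i)\;=\;\frac{X'}{S_0+X'}\,\mathbbm{1}(S_0>0),
\]
i.e.\ the other summands refold into an independent copy $X'$ of $Y$; (3) the exchangeability identity $\mathbb{E}\bigl[Y_0/(Y_0+\cdots+Y_k)\bigr]=\tfrac{1}{k+1}$ for i.i.d.\ strictly positive $Y_i$ (the conditioned copies of $Y$), which replaces any concavity argument; and (4) a generating-function computation that produces $\tfrac1{\bar{\nu}}(1-f(\nu,1-t)-\bar{\nu}tf(\nu',1-t))$ -- your ``$\mathbb{P}(K\ge2)$ for a $t$-thinned $\nu$'' interpretation is a correct reading of the outcome of this algebra, but it is not the starting point of the proof. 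To close your gap you would need to carry out steps (1)--(4) (or an equivalent exact computation); the asymmetric $\mu\neq\nu$ case requires only the size-biasing in step (1) with $\mu$ versus $\mu'$, not a new idea.
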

\begin{proof}
In this proof, we rely on the ideas of \cite[Lemma 10]{bordenave2011rank}.

Let $(T',o')$ be a $\GW(\mu',\nu')$-tree, and let
\begin{align*}X&=\eta_{(T',o')}(\{0\}),\\ t&=\mathbb{P}(X>0). \end{align*}
Let $X_1,X_2,\dots$ be independent copies of $X$, and let $K$ be a random variable with law $\nu'$ independent from $(X_i)$. Let $S=\sum_{i=1}^K X_i$, and let $S_0,S_1,S_2\dots,$ be independent copies of $S$. Let $N$ be a random variable with law $\mu'$ independent from $(S_i)$. Using Lemma~\ref{lemmarecursion} and the definition of $T'$, we see that $X$ has the same distribution as
\[\left(1+\sum_{i=1}^N S_i^{-1}\right)^{-1}.\]

Let $N_*$ be a random variable with law $\mu$ independent from $(S_i)$. Then $\eta_{(T,o)}(\{0\})$ has the same distribution as
\[\left(1+\sum_{i=1}^{N_*} S_i^{-1}\right)^{-1}.\]

Thus,
\begin{align}
\mathbb{E}\eta_{T,o}(\{0\})&=\mathbb{E} \left(1+\sum_{i=1}^{N_*} S_i^{-1}\right)^{-1}
\nonumber\\&=\mathbb{E}\left(1-\frac{\sum_{i=1}^{N_*} S_i^{-1}}{1+\sum_{i=1}^{N_*} S_i^{-1}}\right)\mathbbm{1}(S_i>0\text{ for all }1\le i\le N_*)\nonumber\\&=f(\mu,1-f(\nu',1-t))-\mathbb{E}\left(\sum_{j=1}^{N_*} \frac{ S_j^{-1}}{1+\sum_{i=1}^{N_*} S_i^{-1}}\right)\mathbbm{1}(S_i>0\text{ for all }1\le i\le N_*)\nonumber\\&=
f(\mu,1-f(\nu',1-t))-\bar{\mu}\mathbb{E} \frac{ S_0^{-1}}{1+S_0^{-1}+\sum_{i=1}^{N} S_i^{-1}}\mathbbm{1}(S_i>0\text{ for all }0\le i\le N).\label{eqsp1}
\end{align}

Note that
\begin{align} \frac{ S_0^{-1}}{1+S_0^{-1}+\sum_{i=1}^{N} S_i^{-1}}\mathbbm{1}(S_i>0\text{ for all }0\le i\le N)&= \frac{ \frac{1}{1+\sum_{i=1}^N S_i^{-1}}}{S_0+\frac{1}{1+\sum_{i=1}^{N} S_i^{-1}}}\mathbbm{1}(S_i>0\text{ for all }0\le i\le N)\nonumber\\&=\frac{X'}{S_0+X'}\mathbbm{1}(S_0>0),\label{eqsp2}
\end{align}
where $X'$ is a copy of $X$ independent from $S_0$.

Let $Y_0,Y_1,\dots,$ be independent copies of $X$ conditioned on the event that $X>0$. Let $I_1,I_2,\dots$ be Bernoulli$(t)$ random variables independent from $K$, and let $K'=\sum_{i=1}^K I_i$. Then $S_0$ has the same distribution as $\sum_{i=1}^{K'} Y_i$. So
\begin{equation}\label{eqsp3}\mathbb{E}\frac{X'}{S_0+X'}\mathbbm{1}(S_0>0)=t\mathbb{E}\frac{Y_0}{\sum_{i=0}^{K'} Y_i}\mathbbm{1}(K'>0).\end{equation}

Note that for a deterministic $k$,
\[\mathbb{E}\frac{Y_0}{\sum_{i=0}^{k} Y_i}=\frac{1}{k+1}.\]

Therefore,
\begin{align}t\mathbb{E}\frac{Y_0}{\sum_{i=0}^{K'} Y_i}\mathbbm{1}(K'>0)&=t\sum_{i=1}^\infty \nu_i'\sum_{k=1}^{i} {{i}\choose{k}}t^k(1-t)^{i-k}\frac{1}{k+1}\nonumber\\&=\sum_{i=1}^\infty \nu_i'\frac{1}{i+1}\sum_{k=1}^{i} {{i+1}\choose{k+1}}t^{k+1}(1-t)^{i-k}\nonumber\\&=\sum_{i=1}^\infty \nu_i'\frac{1}{i+1}(1-(1-t)^{i+1}-(i+1)t(1-t)^i)\nonumber\\&=\frac{1}{\bar{\nu}}\sum_{i=1}^\infty \nu_{i+1}-\frac{1}{\bar{\nu}}\sum_{i=1}^\infty \nu_{i+1} (1-t)^{i+1}-t\sum_{i=1}^\infty \nu_i' (1-t)^{i}\nonumber \\&=\frac{1}{\bar{\nu}}(1-f(\nu,1-t)-\bar{\nu}tf(\nu',1-t)).\label{eqsp4}\end{align}

Combining \eqref{eqsp1}, \eqref{eqsp2}, \eqref{eqsp3} and \eqref{eqsp4}, we obtain that
\[\mathbb{E}\eta_{T,o}(\{0\})=f(\mu,1-f(\nu',1-t))-\frac{\bar{\mu}}{\bar{\nu}}(1-f(\nu,1-t)-\bar{\nu}tf(\nu',1-t))=\Lambda(t)\le \max_{t'\in [0,1] }\Lambda(t').\qedhere\]
\end{proof}

Let us define the matrix
\[F_n=\begin{pmatrix}
0&M_n\\
M_n^T&0
\end{pmatrix}.\]
The rows and columns of the matrix $F_n$ are both indexed by $R_n\cup C_n$. Given $o_n\in C_n$, the spectral measure $\eta_{(F_n,o_n)}$ is defined as the unique probability measure on $\mathbb{R}$ such that for any measurable function $f:\mathbb{R}\to\mathbb{C}$, we have

\begin{equation}\label{Fnspectral}\langle f(F_n)e_{o_n},e_{o_n} \rangle=\int f(t) \eta_{(F_n,o_n)}(dt).\end{equation}

\begin{lemma}\label{lemmaspectral}
    For each $n$, let us choose a deterministic $o_n\in C_n$. Assume that there is a (deterministic) locally finite rooted tree $(T,o)$ and a sequence of non-negative integers~$r_n$ such that
    \begin{itemize}
        \item The adjacency operator $A_T$ is essentially self-adjoint;
        \item The sequence $r_n$ tends to infinity;
        \item For all $n$, $B_{r_n}(G_n,o_n)\cong B_{r_n}(T,o)$.
        
    \end{itemize}

    Then
    \[\limsup_{n\to\infty}\eta_{(F_n,o_n)}(\{0\})\le \eta_{(T,o)}(\{0\}).\]
\end{lemma}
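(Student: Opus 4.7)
The plan is to establish weak convergence of the spectral measures $\eta_{(F_n,o_n)}\Rightarrow \eta_{(T,o)}$ on $\mathbb{R}$, and then apply the Portmanteau theorem to the closed set $\{0\}$ to conclude $\limsup_{n}\eta_{(F_n,o_n)}(\{0\})\le \eta_{(T,o)}(\{0\})$. The engine of the argument is the moment method, combined with a sign cancellation that is specific to trees.

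First, I would establish convergence of all moments. By the standard walk expansion,
\[
\int t^k\,d\eta_{(F_n,o_n)}(t)=\langle F_n^k e_{o_n},e_{o_n}\rangle=\sum_{\gamma}\prod_{i=1}^{k}F_n[\gamma_{i-1},\gamma_i],
\]
where $\gamma$ ranges over closed walks $o_n=\gamma_0,\gamma_1,\ldots,\gamma_k=o_n$ in the Tanner graph $G_n$. Every such walk visits only vertices within distance $\lfloor k/2\rfloor$ of $o_n$, so for $k\le 2r_n$ it is confined to $B_{r_n}(G_n,o_n)$, which by hypothesis is a tree rooted-isomorphic to $B_{r_n}(T,o)$. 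In a tree, any closed walk must traverse every edge an even number of times, so since $F_n$ takes values in $\{-1,0,+1\}$, the sign product $\prod_i F_n[\gamma_{i-1},\gamma_i]$ equals $+1$ for every non-vanishing walk. The signed walk sum therefore collapses to the unsigned walk count, giving
\[
\int t^k\,d\eta_{(F_n,o_n)}(t)=\#\{\text{closed walks of length }k\text{ from }o\text{ in }T\}=\langle A_T^k e_o,e_o\rangle=\int t^k\,d\eta_{(T,o)}(t)
\]
for every $k\le 2r_n$. Since $r_n\to\infty$, every moment converges.

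Second, I would upgrade moment convergence to weak convergence via the Fr\'echet-Shohat theorem. Tightness of $\{\eta_{(F_n,o_n)}\}_n$ is immediate: for $n$ large enough that $r_n\ge 1$ the balls $B_1(G_n,o_n)$ and $B_1(T,o)$ agree, so $\int t^2\,d\eta_{(F_n,o_n)}=\|F_n e_{o_n}\|^2=\deg_{G_n}(o_n)=\deg_T(o)<\infty$, and Chebyshev gives uniform tail control. Under the essential self-adjointness of $A_T$, the measure $\eta_{(T,o)}$ is uniquely determined by its moments: for bounded-degree $T$ this is immediate since $A_T$ is then bounded and $\eta_{(T,o)}$ has compact support, and in general it can be deduced by a Carleman-type bound on $\|A_T^k e_o\|$ or by exhibiting $e_o$ as a (quasi-)analytic vector in the cyclic subspace it generates. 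The Fr\'echet-Shohat theorem then delivers $\eta_{(F_n,o_n)}\Rightarrow \eta_{(T,o)}$, and the Portmanteau theorem applied to the closed set $\{0\}\subset\mathbb{R}$ yields the desired inequality.

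The main obstacle is the moment-determinacy step. For a general deterministic locally finite tree with essentially self-adjoint $A_T$, moment determinacy is not automatic; without control on the growth of $\|A_T^k e_o\|$, the Hamburger moment problem could in principle be indeterminate. In the bounded-degree regime this is free, but to cover the full generality of the lemma statement one must leverage essential self-adjointness more delicately — for instance, by passing to the cyclic subspace generated by $e_o$ and invoking Nelson's analytic vector theorem there. This is the technical crux of the argument.
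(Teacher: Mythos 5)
Your walk-expansion and sign-cancellation step is fine, and in fact it is the moment-level shadow of the paper's device (the paper conjugates $F_n$ by a diagonal $\pm 1$ matrix $U_n$ with $U_ne_{o_n}=\pm e_{o_n}$, which makes every entry on the ball equal to $+1$ without changing the spectral measure at the root). The genuine gap is the moment-determinacy claim that you yourself flag as the crux: essential self-adjointness of $A_T$ on the finitely supported functions does \emph{not} imply that $\eta_{(T,o)}$ is determined by its moments. Determinacy of the Hamburger problem for the sequence $\langle A_T^k e_o,e_o\rangle$ is equivalent to essential self-adjointness of the Jacobi matrix obtained by restricting $A_T$ to the polynomial cyclic subspace $\mathrm{span}\{A_T^k e_o\}$, and this is a condition at the vector $e_o$ which does not follow from essential self-adjointness on $c_{00}(V)$: the closure of $A_T$ restricted to the (reducing) cyclic subspace of $e_o$ is always self-adjoint, but the polynomial domain need not be a core for that restriction, and when it is not, the moment problem is indeterminate even though the ambient operator is essentially self-adjoint. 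Nelson's analytic (or quasi-analytic) vector theorem does not rescue this, because it needs growth bounds such as $\|A_T^k e_o\|\le C^k k!$ or a Carleman condition, and no such bound is available from the hypotheses; in the application the lemma must cover $\GW_*(\Pois(c),\Bd(q,k+1))$-trees, whose degrees are unbounded, so the bounded-degree shortcut is not an option. Hence your argument proves weak convergence only under extra assumptions, not in the stated generality.

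For comparison, the paper avoids moments altogether: after the gauge transformation it invokes Proposition 11 of Bordenave--Lelarge--Salez, which yields $\eta_{(F_n',o_n)}\Rightarrow\eta_{(T,o)}$ via strong resolvent convergence --- since $B_{r_n}(G_n,o_n)\cong B_{r_n}(T,o)$ with $r_n\to\infty$, one has $F_n'e\to A_Te$ for every finitely supported $e$, and essential self-adjointness of $A_T$ on this common core is exactly what upgrades convergence on the core to strong resolvent convergence, hence to weak convergence of the spectral measures at the root. Your final step is correct: once weak convergence is in hand, upper semicontinuity on the closed set $\{0\}$ gives $\limsup_n\eta_{(F_n,o_n)}(\{0\})\le\eta_{(T,o)}(\{0\})$. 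So to complete your proof you would either have to restrict to bounded degree, prove a determinacy (Carleman-type) estimate for the limit tree --- which is the genuinely hard point --- or replace the moment method by the resolvent argument, which is what the cited result does.
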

\begin{proof}
Since $T$ is a tree, $B_{r_n}(G_n,o_n)$ is also a tree. Then it is easy to see that one can find a diagonal matrix $U_n$ such that all the diagonal entries of $U_n$ are either $1$ or $-1$, and if we consider the matrix $F_n'=U_n^*F_nU_n$, then given any two vertices $u$ and $v$ of $B_{r_n}(G_n,o_n)$, we have
\[F_n'(u,v)=\begin{cases}1&\text{if $uv$ is an edge of $G_n$,}\\0&\text{otherwise}.\end{cases}\]

Note that $U_n$ is unitary and $U_ne_{o_n}=\pm e_{o_n}$. Thus, for any measurable $f:\mathbb{R}\to \mathbb{C}$, we have
\[\langle f(F_n')e_{o_n},e_{o_n} \rangle=\langle U_n^*f(F_n)U_n e_{o_n},e_{o_n} \rangle=\langle f(F_n)U_ne_{o_n},U_ne_{o_n} \rangle=\langle f(F_n)e_{o_n},e_{o_n} \rangle.\]
Thus, it follows that $\eta_{(F_n,o_n)}=\eta_{(F_n',o_n)}$. In particular, $\eta_{(F_n,o_n)}(\{0\})=\eta_{(F_n',o_n)}(\{0\})$.

By \cite[Proposition 11]{bordenave2011rank}, we see that $\eta_{(F_n',o_n)}$ converges weakly to $\eta_{(T,o)}$. 

Thus,
\[\limsup_{n\to\infty}\eta_{(F_n,o_n)}(\{0\})=\limsup_{n\to\infty}\eta_{(F_n',o_n)}(\{0\})\le \eta_{(T,o)}(\{0\}).\qedhere\]
\end{proof}

Let $P_n:\mathbb{R}^{R_n\cup C_n}\to \mathbb{R}^{R_n\cup C_n}$ be the orthogonal projection to $\ker F_n$. Applying \eqref{Fnspectral} with the choice of $f(t)=\mathbbm{1}(t=0)$, we see the $\eta_{(F_n,o_n)}(\{0\})$ can be expressed as $\|P_n e_{o_n}\|_2^2$. 

Let $P_n':\mathbb{R}^{R_n}\to \mathbb{R}^{R_n}$ be the orthogonal projection to $\ker M_n^T$ and $P_n'':\mathbb{R}^{C_n}\to \mathbb{R}^{C_n}$ be the orthogonal projection to $\ker M_n$. Observing that $P_n=P_n'\oplus P_n''$, we see that for $o_n\in C_n$, we have
\[\|P_n'' e_{o_n}\|_2^2=\|P_n e_{o_n}\|_2^2=\eta_{(F_n,o_n)}(\{0\}).\]

Since the dimension of the image of an orthogonal  projection is given by the trace of the projection, we see that
\begin{equation}\label{dimkerproj}\dim \ker M_n=\sum_{o_n\in C_n} \|P_n'' e_{o_n}\|_2^2=\sum_{o_n\in C_n}\eta_{(F_n,o_n)}(\{0\}). \end{equation}

Now choose $o_n$ as a uniform random element of $C_n$. Let $(T,o)$ be a $\GW_*(\mu,\nu)$-tree. By Skorokhod's representation theorem, one can find a coupling of $o_1,o_2,\dots$ and $(T,o)$ such that almost surely there is a (random) sequence $r_1,r_2,\dots$ of nonnegative integers such that $\lim_{n\to\infty} r_n=\infty$ and for all $n$, $B_{r_n}(G_n,o_n)\cong B_{r_n}(T,o)$.

Then combining \eqref{dimkerproj} and Lemma~\ref{lemmaspectral}, we see that
\[\limsup_{n\to\infty} \frac{\dim\ker M_n}{|C_n|}=\limsup_{n\to\infty} \mathbb{E}\eta_{(F_n,o_n)}(\{0\})\le \mathbb{E}\limsup_{n\to\infty} \eta_{(F_n,o_n)}(\{0\})\le\mathbb{E} \eta_{(T,o)}(\{0\}).\]
Combining this with the upper bound provided by Lemma~\ref{infinitetreebound}, Lemma~\ref{ranklemma1} follows.

\subsection{An upper bound on the rank by leaf removal procedure}

% Let $M_n$ be a sequence of deterministic matrices such that the rows of $M_n$ are indexed by $R_n$ and the columns of $M_n$ are indexed with $C_n$. Let $G_n$ be the Tanner graph of $M_n$. Let $\mu,\nu$ and $\Lambda$ be as in Section~\ref{SecRankThmStatement}. Let $\alpha$ be the smallest solution of
% \[t=f(\mu',1-f(\nu',1-t))\]
% in the interval $[0,1]$.

% The goal of this section is to prove the following lemma.

% \begin{lemma}\label{ranklemma2}
% Assume that $(G_n,C_n)$ converges to a $\GW_*(\mu,\nu)$-tree, and \[\lim_{n\to\infty}\frac{\|M_n\|_{\TV}}{|C_n|}=\bar{\mu}.\]
% Then
% \[\limsup_{n\to\infty} \frac{\rank(M_n)}{|C_n|}\le 1-\Lambda(\alpha).\]
% \end{lemma}

The goal of this section is to prove the following lemma using the leaf removal algorithm which was first analyzed by Karp and Sipser~\cite{karp1981maximum}.

\begin{lemma}\label{ranklemma2}
Let $M_n$ be a deterministic $(\mu,\nu)$-regular sequence of matrices. Let $\alpha$ be the smallest solution of
 \[t=f(\mu',1-f(\nu',1-t))\]
 in the interval $[0,1]$.

Then
\[\limsup_{n\to\infty} \frac{\rank(M_n)}{|C_n|}\le 1-\Lambda(\alpha).\]
\end{lemma}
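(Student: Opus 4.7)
The strategy is to bound the rank via the Karp--Sipser leaf removal algorithm (LRA) on the Tanner graph $G_n$, then invoke local weak convergence to compute the asymptotics of what remains. The LRA iteratively deletes any row of degree~$1$ together with its unique column-neighbor, and deletes isolated rows, terminating in a \emph{core} matrix $M_n^*$ on row set $R^*$ and column set $C^*$ in which no row has degree exactly~$1$. Each row-leaf deletion step decreases $\rank(M_n)$ by exactly~$1$ while removing one column (the leaf row can be used as a pivot, after which the column and row may be dropped), whereas isolated-row deletions change nothing, so
\[\rank(M_n) = (|C_n| - |C^*|) + \rank(M_n^*) \le (|C_n| - |C^*|) + \min\bigl(|C^*_{\mathrm{nz}}|,\ |R^*_{\mathrm{nz}}|\bigr),\]
where ``$\mathrm{nz}$'' denotes non-isolated vertices of the core (zero rows or zero columns contribute nothing to rank). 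The task reduces to showing that the right-hand side is at most $|C_n|(1-\Lambda(\alpha)) + o(|C_n|)$ in probability.

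By the $(\mu,\nu)$-regularity, $G_n$ converges locally at column and row roots to $\GW_*(\mu,\nu)$- and $\GW_*(\nu,\mu)$-trees respectively. I analyze the LRA on these trees via message passing: in a subtree rooted at a column with children distributed as the biased $\mu'$, the root column survives iff no row-child eventually becomes a leaf pointing at it, and a row becomes a leaf iff all its non-parent column-children are peeled. This yields the fixed-point equation
\[t = f\bigl(\mu',\,1 - f(\nu', 1-t)\bigr)\]
for the column-survival probability in the biased subtree. Unbiasing at the root and performing the parallel computation at row roots gives, for the correct $t$,
\[\frac{|C^*|}{|C_n|} \to f(\mu, 1-f(\nu',1-t)),\quad \frac{|C^*_{\mathrm{iso}}|}{|C_n|} \to \mu_0,\quad \frac{|R^*_{\mathrm{nz}}|}{|R_n|} \to 1 - f(\nu,1-t) - \bar\nu t f(\nu',1-t),\]
where the middle limit follows because any column in the core with positive original degree must have a surviving row-neighbor (otherwise that row would itself be a leaf peeling the column). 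The regularity condition also gives $|R_n|/|C_n| \to \bar\mu/\bar\nu$.

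The delicate step is to identify $t$ for the actual LRA on the finite graph as the smallest fixed point $\alpha$, rather than the largest $\alpha'$ that the naive iteration from $t_0=1$ would predict. Finite-size fluctuations on the random graph seed cascades that push the peeling past any intermediate stable fixed point; following the approach of Bordenave--Lelarge--Salez in the symmetric case, I plan to establish this via a monotone coupling that pre-peels an $\varepsilon$-fraction of columns uniformly before running the LRA, shows the modified process reaches a fixed point arbitrarily close to $\alpha$, and then passes $\varepsilon \to 0$ using monotonicity of the core. Once $t=\alpha$ is established, the fixed-point identity gives
\[\Lambda(\alpha) = f(\mu,1-f(\nu',1-\alpha)) - \frac{\bar\mu}{\bar\nu}\bigl(1-f(\nu,1-\alpha) - \bar\nu\alpha f(\nu',1-\alpha)\bigr),\]
and a case analysis on which term achieves the minimum reduces the upper bound to $\rank(M_n)/|C_n| \le 1 - \Lambda(\alpha) + o(1)$ in probability. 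The main obstacle is the cascade argument selecting the smallest fixed point; the remaining steps are direct local-weak-limit bookkeeping.
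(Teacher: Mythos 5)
Your linear-algebra bookkeeping is fine (each degree-one-row pivot removes one column and drops the rank by exactly one, so $\rank(M_n)=(|C_n|-|C^*|)+\rank(M_n^*)$, and the $\min$ bound on the core is valid), but the asymptotic analysis has a genuine gap, and it sits exactly at the step you flag as ``delicate''. You peel degree-one \emph{rows}; for that orientation the honest tree/local analysis starts from the all-alive initial condition $t_0=1$ and the monotone iteration $t_{i+1}=f(\mu',1-f(\nu',1-t_i))$ \emph{decreases} to the \emph{largest} fixed point $\alpha'$, so the column-survival parameter of your LRA is $\alpha'$, not $\alpha$. Indeed, local weak convergence forces the fraction of columns surviving $i$ rounds to track $t_i\downarrow\alpha'$, so the core cannot be asymptotically smaller than the $\alpha'$-prediction; your proposed mechanism --- finite-size fluctuations seeding cascades that push the peeling past an intermediate \emph{stable} fixed point down to $\alpha$ --- is not just unproven, it is false in general (when $t=f(\mu',1-f(\nu',1-t))$ has three roots, the peeling stalls at the first stable root met by the iteration; $O(\sqrt n)$ fluctuations cannot cross a macroscopic barrier), and it is not how Bordenave--Lelarge--Salez argue. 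Executed correctly, your row-side peeling proves the companion bound $\limsup\rank(M_n)/|C_n|\le 1-\Lambda(\alpha')$, which is the paper's Lemma~\ref{ranklemma3} (obtained there by applying Lemma~\ref{ranklemma2} to $M_n^T$), and since neither of $\Lambda(\alpha),\Lambda(\alpha')$ dominates the other in general, this does not yield the stated Lemma~\ref{ranklemma2}. A second, related problem is that membership in the terminal core is not a local observable, so ``invoke local weak convergence to compute the asymptotics of what remains'' is unjustified as written; only finitely many peeling rounds are local.

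The paper's proof avoids both issues by choosing the opposite peeling orientation and truncating: it removes a row whenever some column has that row as its unique live neighbor (i.e.\ it peels degree-one \emph{columns}), runs only $i$ rounds (the sets $\bar L_i,\bar K_i$ are determined by bounded neighborhoods, Lemma~\ref{LiKilocal}), accounts for the rank exactly via a leveled ordering ($\rank(M_n)-\rank(M_n')=|\bar K_i|$), and bounds the remainder crudely by the number of surviving columns, giving $\rank(M_n)/|C_n|\le 1-(|\bar L_i|-|\bar K_i|)/|C_n|$. Taking $n\to\infty$ first (using Lemma~\ref{RnperCn}) and then $i\to\infty$, the iteration started from $\alpha_0=0$ increases to the \emph{smallest} fixed point $\alpha$ automatically --- no overshoot or cascade argument is needed, and no terminal-core densities are ever computed. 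If you want to keep your framework, switch the peeling side and stop after finitely many rounds; your current row-leaf version is the right tool for Lemma~\ref{ranklemma3}, not for this statement.
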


Given a rooted tree $(T,o)$ together with a proper two coloring with color classes $C$ and $R$, we define the sets $L_i=L_i(T,o,C)$ and $K_i=K_i(T,o,C)$ as follows. %Since $T$ is a tree, $T$ is a bipartite graph. Let $C$ and $R$ be the color classes of $T$ such that $o\in C$.

We set $L_0=K_0=\emptyset$, and for $i\ge 1$, we set
\begin{align*}
    L_i&=\{c\in C\,:\,\text{ all the children of $c$ are in }K_{i-1}\},\\
    K_i&=\{r\in R\,:\,\text{ at least one child of $r$ is in }L_i\}.
\end{align*}

Let $(T,o)$ be a $\GW(\mu',\nu')$-tree with a proper two coloring with color classes $C$ and $R$ such that $o\in C$. Let $(T',o')$ be a $GW(\nu',\mu')$-tree with a proper two coloring with color classes $C'$ and $R'$ such that $o'\in R'$. We define
\begin{align*}
\alpha_i&=\mathbb{P}(o\in L_i(T,o,C)),\\
\beta_i&=\mathbb{P}(o'\in K_i(T',o',C')).
\end{align*}

It is straightforward to see that these quantities satisfy the following equations:
\begin{align*}
\alpha_0&=0,\\
\beta_0&=0,\\
\alpha_i&=f(\mu',\beta_{i-1})&\text{ for all }i\ge 1,\\
\beta_i&=1-f(\nu',1-\alpha_i)&\text{ for all }i\ge 1.
\end{align*}

Thus, for all $i\ge 0$, we have
\[\alpha_{i+1}=f(\mu',1-f(\nu',1-\alpha_i)).\]
The well-know argument gives that
\[\alpha=\lim_{i\to \infty} \alpha_i\]
exists, and $\alpha$ is the smallest solution of
\[\alpha=f(\mu',1-f(\nu',1-\alpha))\]
in the interval $[0,1]$.

Now, given a bipartite graph with color classes $R$ and $C$, we define $\bar{L}_i=\bar{L}_i(G,C)$ and $\bar{K}_i=\bar{K}_i(G,C)$ as follows: We set $\bar{K}_0=\emptyset$, and for $i\ge 1$, we set
\begin{align*}
    \bar{L}_i&=\{c\in C\,:\,\text{ $c$ has at most one neighbor not contained in }\bar{K}_{i-1}\},\\
    \bar{K}_i&=\{r\in R\,:\,\text{ at least one the neighbors of $r$ is in }\bar{L}_i\}.
\end{align*}

The proof of the next lemma is straightforward.
\begin{lemma}\label{LiKilocal}\hfill
\begin{enumerate}[(a)]
    \item For $c\in C$, we can decide whether $c\in \bar{L}_i$ based only on the $2(i+1)$ neighborhood of $c$. 
    \item For $r\in R$, we can decide whether $r\in \bar{K}_i$ based only on the $2(i+1)$ neighborhood of $r$.
\end{enumerate}
\end{lemma}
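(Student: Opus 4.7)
The plan is to prove a slightly sharper statement by simultaneous induction on $i$: namely, that whether $c \in \bar{L}_i$ is determined by the $(2i-1)$-neighborhood of $c$, and that whether $r \in \bar{K}_i$ is determined by the $(2i)$-neighborhood of $r$. Since both quantities are at most $2(i+1)$, this refined pair of statements immediately implies the lemma as stated.

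The base case will be $i=0$ for part (b): since $\bar{K}_0 = \emptyset$ by definition, no graph information is required at all and the claim is vacuous. For the inductive step I will alternate between (a) and (b). To pass from the sharper form of (b) at level $i-1$ to the sharper form of (a) at level $i$: deciding $c \in \bar{L}_i$ amounts to counting how many of the neighbors $r'$ of $c$ fail to lie in $\bar{K}_{i-1}$, and by the inductive hypothesis each such check uses only the $(2i-2)$-neighborhood of $r'$, which is contained in the $(2i-1)$-neighborhood of $c$. To pass from the sharper form of (a) at level $i$ to the sharper form of (b) at level $i$: deciding $r \in \bar{K}_i$ amounts to asking whether at least one neighbor $c$ of $r$ lies in $\bar{L}_i$, and by the inductive hypothesis each such check uses only the $(2i-1)$-neighborhood of $c$, which is contained in the $(2i)$-neighborhood of $r$.

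There is no real obstacle here; the only point worth flagging is that one must carry the sharper bounds through the induction rather than attempting to induct directly with the looser radius $2(i+1)$. A naive induction with the looser bound would fail because traversing an edge from an $r$-vertex to a neighboring $c$-vertex increases the required radius by one, so the alternation between $R$ and $C$ loses slack at each half-step and the induction does not close without the tighter bookkeeping described above.
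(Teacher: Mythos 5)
Your double induction is correct, and it fills in exactly the argument the paper leaves implicit (the paper only remarks that the proof is straightforward): alternating between the two color classes and tracking radii $2i-1$ for $\bar{L}_i$ and $2i$ for $\bar{K}_i$ closes the induction and implies the stated $2(i+1)$ bound. Your closing caveat is slightly overstated---one could also close the induction by carrying the pair $(2i+1,\,2i+2)$, using the intermediate bound for $\bar{L}_i$ within the same step---but this does not affect the validity of your proof.
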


Let $(T,o)$ be a $\GW_*(\mu,\nu)$-tree. Then $T$ is a bipartite graph. Let $R$ and $C$ be its color classes such that $o\in C$. Then
\begin{align*}
    \mathbb{P}(o\in \bar{L_i})&=\mathbb{P}(r\in K_{i-1}\text{ for all }r\in D(o))\\&\qquad\qquad+\sum_{r\in D(o)}\mathbb{P}(r\notin K_{i-1}\text{ and }r'\in K_{i-1}\text{ for all }r'\in D(o)\setminus\{r\})\\&=
    \sum_{k=0}^\infty \mu_k(\beta_{i-1}^{k}+k(1-\beta_{i-1})\beta_{i-1}^{k-1})\\&=
    f(\mu,1-f(\nu',1-\alpha_{i-1}))+(1-\beta_{i-1})\bar{\mu}f(\mu',\beta_{i-1})\\&= f(\mu,1-f(\nu',1-\alpha_{i-1}))+f(\nu',1-\alpha_{i-1})\bar{\mu}\alpha_i.
\end{align*}

Combining this with the assumption that $(G_n,C_n)$ converges locally to $(T,o)$ and Lemma~\ref{LiKilocal}, we obtain that
\begin{equation}\label{oinL}\lim_{n\to\infty}\frac{|\bar{L}_i(G_n,C_n)|}{|C_n|}=\mathbb{P}(o\in \bar{L}_i(T,C))=f(\mu,1-f(\nu',1-\alpha_{i-1}))+f(\nu',1-\alpha_{i-1})\bar{\mu}\alpha_i.\end{equation}

Let $(T,o)$ be a $\GW_*(\nu,\mu)$-tree. Then $T$ is a bipartite graph. Let $R$ and $C$ be its color classes such that $o\in R$. Then
\begin{align*}
    \mathbb{P}(o\in \bar{K_i})&=1-\mathbb{P}(c\notin L_i\text{ for all }c\in D(o))\\&=1-
    \sum_{k=0}^\infty \nu_k(1-\alpha_i)^k\\&=1-f(\nu,1-\alpha_i).
\end{align*}

Combining this with the assumption that $(G_n,R_n)$ converges locally to $(T,o)$ and Lemma~\ref{LiKilocal}, we obtain that
\begin{equation}\label{oinK}\lim_{n\to\infty}\frac{|\bar{K}_i(G_n,C_n)|}{|R_n|}=\mathbb{P}(o\in \bar{K}_i(T,C))=1-f(\nu,1-\alpha_i).\end{equation}

Let us define
\[C_n'=C_n\setminus\bar{L}_i(G_n)\text{ and }R_n'=R_n\setminus \bar{K}_i(G_n),\]
and let $M_n'$ be the submatrix of $M_n$ determined by the rows in $R_n'$ and the columns in $C_n'$. 

\begin{lemma}We have
$\rank(M_n)-\rank(M_n')=|\bar{K}_i(G_n)|$.
\end{lemma}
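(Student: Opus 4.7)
The plan is by induction on $i$. Writing $M_n^{(j)}$ for the submatrix of $M_n$ whose rows are indexed by $R_n \setminus \bar{K}_j$ and whose columns are indexed by $C_n \setminus \bar{L}_j$, the goal is to show $\rank(M_n) - \rank(M_n^{(j)}) = |\bar{K}_j|$ for every $j \ge 0$; the desired statement is the case $j = i$. The base case $j=0$ is immediate, since $\bar{K}_0 = \bar{L}_0 = \emptyset$, so it suffices to prove the single-step identity
\[\rank(M_n^{(i-1)}) - \rank(M_n^{(i)}) = |\bar{K}_i \setminus \bar{K}_{i-1}|.\]

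The key observation is a zero-block structure in $M_n^{(i-1)}$. For any $c \in \bar{L}_i \setminus \bar{L}_{i-1}$, the definition of $\bar{L}_i$ says that $c$ has at most one neighbor in $R_n \setminus \bar{K}_{i-1}$; any such neighbor $r$ is a neighbor of $c \in \bar{L}_i$, so lies in $\bar{K}_i$, and hence in $\bar{K}_i \setminus \bar{K}_{i-1}$. Consequently $M_n(r,c) = 0$ whenever $r \in R_n \setminus \bar{K}_i$ and $c \in \bar{L}_i \setminus \bar{L}_{i-1}$. Partitioning the rows of $M_n^{(i-1)}$ as $(\bar{K}_i \setminus \bar{K}_{i-1}) \sqcup (R_n \setminus \bar{K}_i)$ and its columns as $(\bar{L}_i \setminus \bar{L}_{i-1}) \sqcup (C_n \setminus \bar{L}_i)$ gives the block form
\[M_n^{(i-1)} = \begin{pmatrix} A & A' \\ 0 & M_n^{(i)} \end{pmatrix},\]
where $A$ sits in the top-left corner.

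I next analyze $A$. By what was just shown, every column of $A$ has at most one non-zero entry, which is $\pm 1$. Conversely, for each $r \in \bar{K}_i \setminus \bar{K}_{i-1}$ the definition of $\bar{K}_i$ produces a neighbor of $r$ in $\bar{L}_i$; this neighbor cannot lie in $\bar{L}_{i-1}$, for then $r$ would already be in $\bar{K}_{i-1}$. Call this neighbor $c_r$; then the column of $A$ indexed by $c_r$ is exactly $\pm e_r$. The columns $\{c_r\}_{r \in \bar{K}_i \setminus \bar{K}_{i-1}}$ are therefore distinct and linearly independent, and every column of $A$ is either zero or of the form $\pm e_{r}$ for some $r$, so $\rank(A) = |\bar{K}_i \setminus \bar{K}_{i-1}|$.

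Finally, the pivots $c_r$ allow me to clear $A'$ by column operations. As a column of $M_n^{(i-1)}$, the column indexed by $c_r$ equals $\pm e_r$ on the top block and vanishes on the bottom block (by the zero-block above), so subtracting an appropriate multiple of column $c_r$ from any column $c \in C_n \setminus \bar{L}_i$ kills the entry at row $r$ without perturbing any other entry. Iterating this over all $r \in \bar{K}_i \setminus \bar{K}_{i-1}$ and all $c \in C_n \setminus \bar{L}_i$ transforms $M_n^{(i-1)}$ into the block-diagonal matrix
\[\begin{pmatrix} A & 0 \\ 0 & M_n^{(i)} \end{pmatrix},\]
whose rank is $\rank(A) + \rank(M_n^{(i)}) = |\bar{K}_i \setminus \bar{K}_{i-1}| + \rank(M_n^{(i)})$. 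Column operations preserve rank, completing the inductive step. The main obstacle is really just the bookkeeping in the first two steps: one must check that $\bar{L}_i \setminus \bar{L}_{i-1}$ interacts with $R_n \setminus \bar{K}_{i-1}$ precisely so as to produce the block of zeros, and that the matching $r \mapsto c_r$ exists. Once that is in hand, the linear algebra is routine.
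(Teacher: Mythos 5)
Your proof is correct and rests on the same key observation as the paper's: each row in $\bar{K}_i$ of a given level has a witness column all of whose other neighbors lie in strictly lower levels, so that column becomes a unit pivot for that row once the lower levels are removed, forcing the rank to drop by exactly one per element of $\bar{K}_i$, while the columns of $\bar{L}_i$ that remain are zero and can be discarded. The paper implements this by deleting the rows of $\bar{K}_i(G_n)$ one at a time in order of level, whereas you package the same mechanism level-by-level into a block-triangular decomposition with column operations; the two bookkeeping schemes are interchangeable (and both tacitly use the easy monotonicity $\bar{K}_{i-1}\subseteq\bar{K}_i$, $\bar{L}_{i-1}\subseteq\bar{L}_i$, which you may wish to state explicitly since your telescoping sum and your partitions of the row and column sets rely on it).
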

\begin{proof}
For $r\in \bar{K}_i(G_n)$, the level $\ell(r)$ of $r$ is defined as the unique $\ell$ such that $r\in \bar{K}_{\ell}(G_n)\setminus \bar{K}_{\ell-1}(G_n)$. Let $r_1,r_2,\dots,r_h$ be a list of the elements of $\bar{K}_{i}(G_n)$ such that $\ell(r_1)\le \ell(r_2)\le \dots$. Let $M_{n,0}=M_n$, and for $j>0$, let $M_{n,j}$ be obtained from $M_{n,j-1}$ by deleting the row index by $r_j$. Note that $M_{n,j-1}$ always has a column which has a unique non-zero entry in the row indexed by $r_j$. In particular, the row indexed by $r_j$ is linearly independent from all other rows of $M_{n,j-1}$. Thus, $\rank(M_{n,j})=\rank(M_{n,j-1})-1$. Therefore, $\rank(M_{n,h})=\rank(M_n)-|\bar{K}_i(G_n)|$. Observe that the columns of $M_{n,h}$ which are indexed by $C_n\setminus C_n'$ are all zero columns. Thus, $\rank(M_n')=\rank(M_{n,h})$, and the statement follows. 
\end{proof}

Combining this with the trivial estimate that  $\rank(M_n')\le |C_n'|=|C_n|-|\bar{L}_i(G_n)|$. We obtain that
\begin{equation}\label{rankestimate}\frac{\rank(M_n)}{|C_n|}\le 1-\frac{|\bar{L}_i(G_n)|-|\bar{K}_i(G_n)|}{|C_n|}. \end{equation}

\begin{lemma}\label{RnperCn}
We have 
\[\lim_{n\to\infty} \frac{|R_n|}{|C_n|}=\frac{\bar{\mu}}{\bar{\nu}}.\]
\end{lemma}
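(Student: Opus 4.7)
The proof is essentially a double-counting argument for the edges of the bipartite Tanner graph $G_n$, combined with the regularity hypotheses. Since $G_n$ is bipartite with color classes $C_n$ and $R_n$, summing degrees on the $C_n$ side and on the $R_n$ side both count the total number of edges $|E(G_n)|$, so
\[
|C_n|\cdot \overline{\deg}_{G_n}(C_n) \;=\; |E(G_n)| \;=\; |R_n|\cdot \overline{\deg}_{G_n}(R_n).
\]

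The plan is to rearrange this identity to get
\[
\frac{|R_n|}{|C_n|} \;=\; \frac{\overline{\deg}_{G_n}(C_n)}{\overline{\deg}_{G_n}(R_n)},
\]
and then invoke the $(\mu,\nu)$-regularity of the sequence $M_n$: by definition items (iii) and (v), we have $\overline{\deg}_{G_n}(C_n)\to\bar{\mu}$ and $\overline{\deg}_{G_n}(R_n)\to\bar{\nu}$. Since $\bar{\nu}>0$, the denominator is bounded away from $0$ for large $n$, so we can pass to the limit in the quotient and conclude
\[
\lim_{n\to\infty}\frac{|R_n|}{|C_n|} \;=\; \frac{\bar{\mu}}{\bar{\nu}}.
\]

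There is really no obstacle here; the only small thing to check is that $\overline{\deg}_{G_n}(R_n)$ stays bounded away from $0$, which is immediate from its convergence to the positive limit $\bar{\nu}$. Note that this step is exactly where the quantitative regularity assumptions (iii) and (v) (and not merely local weak convergence on the two sides) are used: local convergence alone controls the degree distribution only through bounded test functions, whereas here we need convergence of the mean degree itself.
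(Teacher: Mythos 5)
Your proof is correct and is essentially the paper's own argument: the paper likewise double-counts the nonzero entries of $M_n$ (equivalently, the edges of the Tanner graph) to get $|C_n|\cdot\overline{\deg}_{G_n}(C_n)=|R_n|\cdot\overline{\deg}_{G_n}(R_n)$, and then passes to the limit using regularity items (iii) and (v) together with $\bar{\nu}>0$. Nothing further is needed.
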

\begin{proof}
Expressing the number of nonzero entries of $M_n$ in two different ways, we obtain that
\[|C_n|\cdot\overline{\deg}_{G_n}(C_n)=|R_n|\cdot\overline{\deg}_{G_n}(R_n).\]
Thus, since $M_n$ is $(\mu,\nu)$-regular, we have
\[\lim_{n\to\infty} \frac{|R_n|}{|C_n|}=\lim_{n\to\infty}\frac{\overline{\deg}_{G_n}(C_n)}{\overline{\deg}_{G_n}(R_n)}=\frac{\bar{\mu}}{\bar{\nu}}.\qedhere\]
\end{proof}

Combining \eqref{rankestimate}, \eqref{oinL}, \eqref{oinK} and Lemma~\ref{RnperCn}, we see that
\begin{align}\limsup_{n\to\infty} &\frac{\rank(M_n)}{|C_n|}\nonumber\\&\le 1-\lim_{n\to\infty}\frac{|\bar{L}_i(G_n)|-|\bar{K}_i(G_n)|}{|C_n|}\nonumber\\&=1-\lim_{n\to\infty}\frac{|\bar{L}_i(G_n)|}{|C_n|}+\lim_{n\to\infty}\frac{|R_n|}{|C_n|}\cdot\frac{|\bar{K}_i(G_n)|}{|R_n|}\nonumber\\&=1-\left(f(\mu,1-f(\nu',1-\alpha_{i-1}))+f(\nu',1-\alpha_{i-1})\bar{\mu}\alpha_i-\frac{\bar{\mu}}{\bar{\nu}}(1-f(\nu,1-\alpha_i))\right).\label{ranki}\end{align}

Observing that
\begin{multline*}\lim_{i\to\infty}f(\mu,1-f(\nu',1-\alpha_{i-1}))+f(\nu',1-\alpha_{i-1})\bar{\mu}\alpha_i-\frac{\bar{\mu}}{\bar{\nu}}(1-f(\nu,1-\alpha_i))\\=f(\mu,1-f(\nu',1-\alpha))-\frac{\bar{\mu}}{\bar{\nu}}(1-f(\nu,1-\alpha)-\bar{\nu}\alpha f(\nu',1-\alpha))=\Lambda(\alpha),\end{multline*}
Lemma~\ref{ranklemma2} follows from~\eqref{ranki} by tending to infinity with $i$.

\subsection{Leaf removal procedure for the transpose matrix}

The goal of this section to prove the following lemma by applying Lemma~\ref{ranklemma2} to $M_n^T.$
\begin{lemma}\label{ranklemma3}
Let $M_n$ be a $(\mu,\nu)$-regular sequence of matrices. Let $\alpha'$ be the largest solution of
 \[t=f(\mu',1-f(\nu',1-t))\]
 in the interval $[0,1]$.

Then
\[\limsup_{n\to\infty} \frac{\rank(M_n)}{|C_n|}\le 1-\Lambda(\alpha').\]

\end{lemma}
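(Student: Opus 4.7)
The plan is to deduce Lemma~\ref{ranklemma3} by applying Lemma~\ref{ranklemma2} to the transpose matrix $M_n^T$ and then translating the resulting bound back. First, I would observe that the Tanner graph of $M_n^T$ coincides with $G_n$ with the roles of $C_n$ and $R_n$ exchanged; since the five conditions defining $(\mu,\nu)$-regularity are symmetric under simultaneously swapping $(C_n,R_n)$ with $(R_n,C_n)$ and $\mu$ with $\nu$, the sequence $M_n^T$ is $(\nu,\mu)$-regular. Let $\Lambda^T$ denote the function from Section~\ref{SecRankThmStatement} obtained by interchanging $\mu$ and $\nu$, and let $\alpha_T$ be the smallest solution of $t=f(\nu',1-f(\mu',1-t))$ in $[0,1]$. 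Applying Lemma~\ref{ranklemma2} to $M_n^T$ and combining $\rank(M_n^T)=\rank(M_n)$ with $|R_n|/|C_n|\to \bar{\mu}/\bar{\nu}$ from Lemma~\ref{RnperCn} yields
\[\limsup_{n\to\infty}\frac{\rank(M_n)}{|C_n|}\le \frac{\bar{\mu}}{\bar{\nu}}\bigl(1-\Lambda^T(\alpha_T)\bigr).\]

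Next, I would set up an involutive correspondence between the solution sets of the two fixed-point equations. For each solution $\alpha$ of $t=f(\mu',1-f(\nu',1-t))$, set $\beta=1-f(\nu',1-\alpha)$, so that the pair $(\alpha,\beta)$ satisfies the coupled system $\alpha=f(\mu',\beta)$ and $1-\beta=f(\nu',1-\alpha)$. A direct substitution then shows that $1-\beta=f(\nu',1-\alpha)$ is itself a solution of $t=f(\nu',1-f(\mu',1-t))$. Since the map $\alpha\mapsto f(\nu',1-\alpha)$ is strictly decreasing on $[0,1]$ (as $f(\nu',\cdot)$ has nonnegative coefficients), the largest solution $\alpha'$ of the original equation corresponds to the smallest solution $\alpha_T=f(\nu',1-\alpha')$ of the transposed equation, and I may set $\beta'=1-f(\nu',1-\alpha')$, so that $\alpha_T=1-\beta'$.

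Finally, I would verify the algebraic identity
\[\frac{\bar{\mu}}{\bar{\nu}}\bigl(1-\Lambda^T(\alpha_T)\bigr)=1-\Lambda(\alpha')\]
by direct substitution, using $1-\alpha_T=\beta'$, $f(\mu',1-\alpha_T)=f(\mu',\beta')=\alpha'$, and $f(\mu,1-\alpha_T)=f(\mu,\beta')$. Under these relations, the two terms of the form $\frac{\bar{\mu}}{\bar{\nu}}f(\nu,1-\alpha')$ cancel, the $f(\mu,\beta')$-terms cancel, and the two cross terms $\bar{\mu}\alpha'f(\nu',1-\alpha')$ and $\bar{\mu}\alpha_T f(\mu',1-\alpha_T)=\bar{\mu}(1-\beta')\alpha'$ coincide, leaving exactly the displayed identity. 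Combined with the bound from the first paragraph, this proves the lemma. The main obstacle is just bookkeeping in this algebraic identity; there is no additional analytic content beyond what Lemma~\ref{ranklemma2} already supplies.
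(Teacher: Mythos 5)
Your proposal is correct and follows essentially the same route as the paper: transpose $M_n$, note that $M_n^T$ is $(\nu,\mu)$-regular, apply Lemma~\ref{ranklemma2} with the smallest solution $\alpha_T=f(\nu',1-\alpha')$ of the swapped fixed-point equation (via the order-reversing correspondence $t\mapsto f(\nu',1-t)$), and convert back with Lemma~\ref{RnperCn} together with the algebraic identity $\frac{\bar{\mu}}{\bar{\nu}}\bigl(1-\Lambda^T(\alpha_T)\bigr)=1-\Lambda(\alpha')$, which indeed checks out. The only difference is cosmetic bookkeeping in when the factor $|R_n|/|C_n|$ is introduced.
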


\begin{claim}
    The map $t\mapsto f(\nu',1-t)$ is monotonically decreasing on $[0,1]$ and the restriction of this map gives a bijection between the solutions of $t=f(\mu',1-f(\nu',1-t))$ in the interval $[0,1]$, and the solutions of $u=f(\nu',1-f(\mu',1-u))$ in the interval $[0,1]$.
\end{claim}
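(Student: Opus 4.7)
The plan is to split the claim into two independent pieces: the monotonicity of the map $\phi(t) := f(\nu', 1-t)$, and a purely formal bijection argument using function composition.

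For the monotonicity, I would observe that $f(\nu', s) = \sum_{i \geq 0} \nu_i' s^i$ is a power series with nonnegative coefficients, hence nondecreasing in $s \in [0,1]$, so $\phi$ is nonincreasing on $[0,1]$. This also shows $\phi$ maps $[0,1]$ into $[0,1]$ since $f(\nu',1) = \sum_i \nu_i' = 1$ and $f(\nu',0) = \nu_0' \geq 0$. Define $\psi(u) := f(\mu', 1-u)$ analogously; by the same reasoning $\psi$ maps $[0,1]$ into $[0,1]$.

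For the bijection, rewrite the two fixed-point equations as $t = \psi(\phi(t))$ and $u = \phi(\psi(u))$, and call their respective solution sets $S \subset [0,1]$ and $T \subset [0,1]$. If $t \in S$, set $u := \phi(t) \in [0,1]$; then $\psi(u) = \psi(\phi(t)) = t$, so $\phi(\psi(u)) = \phi(t) = u$, which means $u \in T$. Symmetrically, $\psi$ sends $T$ into $S$. The two defining identities $\psi(\phi(t)) = t$ on $S$ and $\phi(\psi(u)) = u$ on $T$ say precisely that $\phi|_S$ and $\psi|_T$ are mutually inverse. Hence $\phi|_S : S \to T$ is a bijection.

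There is essentially no obstacle here: the argument is formal and, in particular, does not need $\phi$ to be strictly monotone (which could in principle fail in degenerate cases such as $\nu$ concentrated on $\{0,1\}$). The only point that deserves a brief check is that $\phi$ and $\psi$ are genuinely well-defined as maps $[0,1] \to [0,1]$, which is immediate from the probabilistic interpretation of $f(\mu', \cdot)$ and $f(\nu', \cdot)$ as probability generating functions.
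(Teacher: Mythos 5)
Your proof is correct and follows essentially the same route as the paper: apply $\phi(t)=f(\nu',1-t)$ to the fixed-point equation $t=\psi(\phi(t))$ to send its solutions to solutions of $u=\phi(\psi(u))$, apply $\psi$ for the reverse direction, and observe the two restrictions are mutually inverse. Your additional remark that only weak monotonicity (nonnegative power-series coefficients) is needed is a fair observation but does not change the argument.
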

\begin{proof}
The map $t\mapsto f(\nu',1-t)$ is clearly monotonically decreasing, so we focus on the second part of the statement. If $t\in [0,1]$ is a solution of 
\[t=f(\mu',1-f(\nu',1-t)),\]
then
\[f(\nu',1-t)=f(\nu',1-f(\mu',1-f(\nu',1-t))).\]
Thus, $u=f(\nu',1-t)$ is a solution of
\[u=f(\nu',1-f(\mu',1-u)).\]

If $u$ is a solution of $u=f(\nu',1-f(\mu',1-u))$, then the same argument as before gives that $t=f(\mu',1-u)$ is a solution of $t=f(\mu',1-f(\nu',1-t))$, moreover,
\[u=f(\nu',1-f(\mu',1-u))=f(\nu',1-t).\qedhere\]
\end{proof}

It follows from the claim that $\alpha^T=f(\nu',1-\alpha')$ is the smallest solution of \[\alpha^T=f(\nu',1-f(\mu',1-\alpha^T))\]
in the interval $[0,1]$. 

It is clear from the definitions that if $M_n$ is a $(\mu,\nu)$-regular sequence, then $M_n^T$ is a $(\nu,\mu)$-regular sequence. Thus, applying Lemma~\ref{ranklemma2} to $M_n^T$, we obtain that
\begin{align*}\limsup_{n\to\infty}& \frac{\rank(M_n^T)}{|R_n|}\\&\le 1-f(\nu,1-f(\mu',1-\alpha^T))+\frac{\bar{\nu}}{\bar{\mu}}\left(1-f(\mu,1-\alpha^T)-\bar{\mu}\alpha^T f(\mu',1-\alpha^T)\right)\\&= 1-f(\nu,1-f(\mu',1-f(\nu',1-\alpha')))\\&\qquad+\frac{\bar{\nu}}{\bar{\mu}}\left(1-f(\mu,1-f(\nu',1-\alpha'))-\bar{\mu}f(\nu',1-\alpha') f(\mu',1-f(\nu',1-\alpha'))\right)\\&= 1-f(\nu,1-\alpha')+\frac{\bar{\nu}}{\bar{\mu}}\left(1-f(\mu,1-f(\nu',1-\alpha'))-\bar{\mu}\alpha'f(\nu',1-\alpha')\right)\\&=\frac{\bar{\nu}}{\bar{\mu}}\left(1-\Lambda(\alpha')\right).
\end{align*}

Combining this with Lemma~\ref{RnperCn}, it follows that
\[\limsup_{n\to\infty} \frac{\rank(M_n^T)}{|C_n|}\le \limsup_{n\to\infty} \frac{|R_n|}{|C_n|}\cdot\frac{\rank(M_n^T)}{|R_n|}\le 1-\Lambda(\alpha').\]

Thus, we proved Lemma~\ref{ranklemma3}.

\subsection{Finishing the proof of Theorem~\ref{RankThm}}

\begin{lemma}\label{ranklemma4}
Let $M_n$ be a deterministic sequence of matrices such that $(G_n,C_n)$ converges to a $\GW_*(\mu,\nu)$-tree. Let $\alpha$ and $\alpha'$ be the smallest and largest solution of
 \[t=f(\mu',1-f(\nu',1-t))\]
 in the interval $[0,1]$, respectively. 

 Assume that $\max_{t\in [0,1]}\Lambda(t)=\max(\Lambda(\alpha),\Lambda(\alpha'))$.

Then
\[\lim_{n\to\infty} \frac{\rank(M_n)}{|C_n|}=1-\max_{t\in [0,1]}\Lambda(t).\]
\end{lemma}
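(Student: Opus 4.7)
The plan is to assemble the three preceding lemmas (\ref{LemmaReg}, \ref{ranklemma1}, \ref{ranklemma2}, \ref{ranklemma3}) into matching upper and lower bounds. All the genuine work has been done in those lemmas; the present lemma is a clean bookkeeping step that combines them, with the hypothesis on $\Lambda$ doing exactly the work needed to make the sandwich close.

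First I would use the regularization result (Lemma~\ref{LemmaReg}) to pass from the given deterministic sequence $M_n$ to a $(\mu,\nu)$-regular sequence $M_n'$ satisfying
\[\lim_{n\to\infty}\left|\frac{\rank(M_n)}{|C_n|}-\frac{\rank(M_n')}{|C_n'|}\right|=0.\]
Thus it suffices to compute the limit of $\rank(M_n')/|C_n'|$, and we may assume from the outset that the sequence $M_n$ is itself $(\mu,\nu)$-regular.

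With regularity in hand, Lemma~\ref{ranklemma1} gives
\[\liminf_{n\to\infty}\frac{\rank(M_n)}{|C_n|}\ge 1-\max_{t\in[0,1]}\Lambda(t),\]
while Lemma~\ref{ranklemma2} (applied to $M_n$) and Lemma~\ref{ranklemma3} (which was obtained by applying the leaf-removal bound to the transpose) give respectively
\[\limsup_{n\to\infty}\frac{\rank(M_n)}{|C_n|}\le 1-\Lambda(\alpha)\quad\text{and}\quad\limsup_{n\to\infty}\frac{\rank(M_n)}{|C_n|}\le 1-\Lambda(\alpha').\]
Taking the better of the two upper bounds yields
\[\limsup_{n\to\infty}\frac{\rank(M_n)}{|C_n|}\le 1-\max(\Lambda(\alpha),\Lambda(\alpha')).\]

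At this point the hypothesis $\max_{t\in[0,1]}\Lambda(t)=\max(\Lambda(\alpha),\Lambda(\alpha'))$ is invoked: the upper and lower bounds coincide, forcing the limit to exist and equal $1-\max_{t\in[0,1]}\Lambda(t)$. There is no real obstacle here, since the only step that is not an immediate invocation of a prior lemma is the trivial observation that the hypothesis closes the gap between the spectral lower bound (which sees the global maximum of $\Lambda$) and the leaf-removal upper bounds (which see only the two extremal fixed points $\alpha$ and $\alpha'$). The hypothesis is what is ultimately verified by Lemma~\ref{lemmaFixedpoints} in the applications to the Linial--Meshulam filtration.
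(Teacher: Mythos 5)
Your proposal is correct and is essentially identical to the paper's own proof: reduce to a $(\mu,\nu)$-regular sequence via Lemma~\ref{LemmaReg}, then sandwich the rank between the spectral lower bound of Lemma~\ref{ranklemma1} and the two leaf-removal upper bounds of Lemmas~\ref{ranklemma2} and~\ref{ranklemma3}, with the hypothesis $\max_{t\in[0,1]}\Lambda(t)=\max(\Lambda(\alpha),\Lambda(\alpha'))$ closing the gap. Nothing further is needed.
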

\begin{proof}
By Lemma~\ref{LemmaReg}, we may assume that $M_n$ is $(\mu,\nu)$-regular. Assuming that $M_n$ is $(\mu,\nu)$-regular the statement follows by combining Lemma~\ref{ranklemma1}, Lemma~\ref{ranklemma2} and Lemma~\ref{ranklemma3}.
\end{proof}

Now we prove Theorem~\ref{RankThm}. Since $(G_n,C_n)$ converges locally to a $\GW_*(\mu,\nu)$-tree in probability,  we can find a coupling of $M_1,M_2,\dots$ such that $(G_n,C_n)$ converges locally to a $\GW_*(\mu,\nu)$-tree almost surely. Thus, Theorem~\ref{RankThm} follows from Lemma~\ref{ranklemma4}.

\section{Finding a nice basis of $Z_i(\mathcal{K})$ -- The proof of Lemma~\ref{lemmagoodbasis}}\label{seclemmagoodbasis}

We define a total preorder on the set $Z_i(\mathcal{K})$ as follows. For $c_1,c_2\in Z_i(\mathcal{K})$, we say that $c_1\le c_2$, if either
\begin{itemize}
    \item $b(c_1)<b(c_2)$, or
    \item $b(c_1)=b(c_2)$ and $d(c_1)\le d(c_2)$.
\end{itemize}

Note that $b(c)$ and $d(c)$ can take only finitely many values, since $\kappa$ can take only finitely many values. Thus, it follows easily that every nonempty subset of $Z_i(\mathcal{K})$ has a smallest and a largest element. (They might not be unique.)

We construct a basis $\mathcal{C}=\{c_1,c_2,\dots,c_{\dim Z_{i}(\mathcal{K})}\}$ of $Z_i(\mathcal{K})$ by choosing the basis vectors one by one as follows. Assume that we have already chosen $c_1,c_2,\dots,c_{j-1}$, then we choose $c_{j}$ as one of the smallest elements of $Z_i(\mathcal{K})\setminus \text{span}(c_1,\dots,c_{j-1})$.

\begin{lemma}\label{lemmamatroid}
Let $c\in Z_i(\mathcal{K})$, and write $c$ as $c=\sum_{j=1}^{\dim Z_i(\mathcal{K})} \gamma_j c_j$. Then for all $j$ such that $\gamma_j\neq 0$, we have $c_j\le c$. 
\end{lemma}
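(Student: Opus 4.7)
The plan is to prove Lemma~\ref{lemmamatroid} by a direct appeal to the greedy construction of $\mathcal{C}$, exploiting the fact that the preorder defined before the lemma is total (it is the lexicographic comparison of the pair $(b(c),d(c))$, so any two cycles are comparable in at least one direction).

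First I would observe that the key elementary fact is this: if $c = \sum_i \gamma_i c_i$ and some particular coefficient $\gamma_j$ is nonzero, then $c \notin \mathrm{span}(c_1,\dots,c_{j-1})$. Indeed, if $c$ equalled $\sum_{i<j}\delta_i c_i$, then subtracting would yield a nontrivial linear relation among $c_1,\dots,c_{\dim Z_i(\mathcal{K})}$ (with $c_j$ appearing with nonzero coefficient $\gamma_j$), contradicting that these vectors form a basis.

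Once this is in hand, the lemma is immediate from the construction. For any $j$ with $\gamma_j\neq 0$, the previous paragraph gives $c\in Z_i(\mathcal{K})\setminus \mathrm{span}(c_1,\dots,c_{j-1})$. By construction, $c_j$ was chosen as one of the smallest elements of exactly this set in the preorder $\le$; in particular $c_j\le c$.

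I do not anticipate a real obstacle: the argument is the standard matroid-style justification that the greedy algorithm produces a basis with the ``smallest possible'' elements at each index. The only subtlety worth a sentence in the writeup is to note explicitly that the preorder is total (so ``smallest element'' really does mean an element $c_j$ with $c_j\le c'$ for every $c'$ in the set), and that the set $Z_i(\mathcal{K})\setminus \mathrm{span}(c_1,\dots,c_{j-1})$ contains a smallest element (which, as remarked before the lemma, follows from $b$ and $d$ taking only finitely many values, so the preorder has only finitely many equivalence classes on any subset).
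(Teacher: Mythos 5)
Your proposal is correct and matches the paper's argument: both proofs note that $\gamma_j\neq 0$ forces $c\notin\operatorname{span}(c_1,\dots,c_{j-1})$ and then invoke the greedy choice of $c_j$ as a smallest element of $Z_i(\mathcal{K})\setminus\operatorname{span}(c_1,\dots,c_{j-1})$ to conclude $c_j\le c$. Your extra remarks on totality of the preorder and existence of minimal elements are just making explicit what the paper records before stating the lemma.
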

\begin{proof}
Take a $j$ such that $\gamma_j\neq 0$. Then $c$ and $c_j$ are both elements of $Z_i(\mathcal{K})\setminus \text{span}(c_1,\dots,c_{j-1})$. Thus, by the rule of choosing $c_j$, we must have $c_j\le c$.
\end{proof}

To prove Lemma~\ref{lemmagoodbasis}, it is enough to prove the following lemma. 
\begin{lemma}
Let $c=\sum_{j=1}^{\dim Z_i(\mathcal{K})} \gamma_j c_j$ be an element of $Z_i(\mathcal{K})$, and let
\[J=\{j\,:\,\gamma_j\neq 0\}.\]  Then for all $j\in J$, we have $b(c_j)\le b(c)$ and $d(c_j)\le d(c)$. 
\end{lemma}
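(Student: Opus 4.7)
The plan is to handle the two inequalities separately. The birth-time bound is immediate from Lemma~\ref{lemmamatroid}: for $j \in J$, we have $c_j \leq c$ in the preorder, and by the very definition of $\leq$ this forces $b(c_j) \leq b(c)$.

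The death-time bound is more delicate; I would argue by contradiction. Suppose some $j \in J$ satisfies $d(c_j) > d(c)$, and let $j_1$ be the largest such index. Set
\[\tilde c = \sum_{j \in J,\, j \leq j_1} \gamma_j c_j = c - \sum_{j \in J,\, j > j_1} \gamma_j c_j.\]
By maximality of $j_1$, every $j \in J$ with $j > j_1$ satisfies $d(c_j) \leq d(c)$, so each such $c_j$ lies in $B_i(\mathcal{K}(d(c)))$. Combined with $c \in B_i(\mathcal{K}(d(c)))$, this gives $\tilde c \in B_i(\mathcal{K}(d(c)))$, and hence $d(\tilde c) \leq d(c) < d(c_{j_1})$.

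Now apply Lemma~\ref{lemmamatroid} to $\tilde c$, whose expansion in the basis $\mathcal{C}$ has $\gamma_{j_1} \neq 0$: we obtain $c_{j_1} \leq \tilde c$ in the preorder. Since $d(\tilde c) < d(c_{j_1})$, the definition of the preorder forces the strict inequality $b(c_{j_1}) < b(\tilde c)$. To contradict this, I would exploit the fact that the greedy construction produces the basis $c_1, c_2, \ldots$ in non-decreasing order with respect to the preorder: at each step $j+1$, the vector $c_{j+1}$ lies in $Z_i(\mathcal{K}) \setminus \mathrm{span}(c_1, \ldots, c_{j-1})$, and $c_j$ is by construction a minimum of this set, so $c_j \leq c_{j+1}$. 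In particular, for every $j \leq j_1$ we get $b(c_j) \leq b(c_{j_1})$, so each such $c_j$ lies in $Z_i(\mathcal{K}(b(c_{j_1})))$, and therefore $\tilde c \in Z_i(\mathcal{K}(b(c_{j_1})))$, giving $b(\tilde c) \leq b(c_{j_1})$. This contradicts $b(c_{j_1}) < b(\tilde c)$ and completes the proof.

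The main obstacle is the death-time inequality: it cannot be read off directly from $c_j \leq c$, because the preorder permits $d(c_j) > d(c)$ whenever $b(c_j) < b(c)$. The resolution is the truncation step $c \mapsto \tilde c$, which discards precisely the basis vectors that are harmless from the death-time viewpoint, so that Lemma~\ref{lemmamatroid} applied to $\tilde c$ can be played against the intrinsic monotonicity of the greedy construction.
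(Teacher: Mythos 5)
Your proof is correct, but it takes a genuinely different route from the paper's. Both arguments hinge on Lemma~\ref{lemmamatroid} and on discarding a ``harmless'' part of the expansion of $c$, but the extremal principle differs. The paper argues by minimal counterexample: among all counterexamples it fixes one with $b(c)$ as small as possible, splits $J$ into the indices with $b(c_j)<b(c)$ and those with $b(c_j)=b(c)$, $d(c_j)\le d(c)$, and applies the minimality hypothesis to the auxiliary cycle $c'=\sum_{j\in J_1}\gamma_j c_j$, whose birth time is strictly smaller. You instead fix $c$, take the largest index $j_1\in J$ with $d(c_{j_1})>d(c)$, truncate to $\tilde c=\sum_{j\in J,\,j\le j_1}\gamma_j c_j$, and play Lemma~\ref{lemmamatroid} (applied to $\tilde c$, which has $\gamma_{j_1}\neq 0$ in its unique basis expansion) against an observation the paper never states: the greedy construction produces a basis that is non-decreasing in the preorder, $c_1\le c_2\le\cdots$, since successive basis vectors are chosen as minima of nested sets. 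That monotonicity claim is valid (the preorder is total, hence transitive), and your intermediate steps check out: $\tilde c\in B_i(\mathcal{K}(d(c)))$ because this is a subspace containing $c$ and the discarded $c_j$'s, so $d(\tilde c)\le d(c)<d(c_{j_1})$; then $c_{j_1}\le\tilde c$ forces $b(c_{j_1})<b(\tilde c)$; while each retained $c_j$ satisfies $c_j\le c_{j_1}$, hence lies in $Z_i(\mathcal{K}(b(c_{j_1})))$, so $b(\tilde c)\le b(c_{j_1})$, a contradiction. What each approach buys: yours replaces the paper's implicit induction over the finitely many possible birth values by an explicit structural fact about the greedy basis, making the contradiction a single self-contained step and recording the (reusable) fact that $\mathcal{C}$ is sorted with respect to the preorder; the paper's version needs nothing about the ordering of the construction beyond Lemma~\ref{lemmamatroid} plus the extremal-counterexample principle, so it carries slightly less bookkeeping.
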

\begin{proof}
    Assume that the statement is not true. Given $\mathcal{K}$ and $\kappa$, among the counter examples choose one where $b(c)$ as small as possible. 
    
    By Lemma~\ref{lemmamatroid}, we see that $J$ can be partitioned into the following two sets
    \begin{align*}J_1&=\{j\in J\,:\,b(c_j)<b(c)\},\\J_2&=\{j\in J\,:\,b(c_j)=b(c)\text{ and }d(c_j)\le d(c)\}.\end{align*}
    Note that for all $j\in J_2$, we have $b(c_j)\le b(c)$ and $d(c_j)\le d(c)$.

    We let $c'=\sum_{j\in J_1}\gamma_j c_j=c-\sum_{j\in J_2}\gamma_j c_j$. Note that $b(c')\le \max_{j\in J_1} b(c_j)<b(c)$. Thus, by the choice of $c$, we see that for all $j\in J_1$, we have $d(c_j)\le d(c')$. Since $c'=c-\sum_{j\in J_2}\gamma_j c_j$, we see that $d(c')\le \max(d(c),\max_{j\in J_2} d(c_j))=d(c)$. Therefore,
    for all $j\in J_1$, we have $b(c_j)\le b(c)$ and $d(c_j)\le d(c)$. Thus, it turns out that $c$ was not a counter example, which is a contradiction.
\end{proof}

%\cite{*}
\bibliography{references}
\bibliographystyle{plain}

\bigskip

\noindent Andr\'as M\'esz\'aros, \\
{\tt meszaros@renyi.hu}\\
HUN-REN Alfr\'ed R\'enyi Institute of Mathematics,\\
Budapest, Hungary

\end{document}